\DeclareMathOperator{\e}{e}
\DeclareMathOperator{\Span}{span}
\DeclareMathOperator{\Diag}{diag}
\newcommand{\revision}[1]{{\color{black} #1}}
\begin{document}

\title{Convergence analysis for parallel-in-time solution of
  hyperbolic systems}

\author[1]{Hans De Sterck}

\author[2]{Stephanie Friedhoff*}

\author[3]{Alexander J.M. Howse}

\author[4]{Scott P. MacLachlan}

\authormark{De Sterck, Friedhoff, Howse, MacLachlan}

\address[1]{\orgdiv{Department of Applied Mathematics}, \orgname{University of Waterloo}, \orgaddress{\state{Waterloo, ON}, \country{Canada}}}

\address[2]{\orgdiv{Department of Mathematics}, \orgname{Bergische Universit\"at Wuppertal}, \orgaddress{\state{Wuppertal}, \country{Germany}}}

\address[3]{\orgdiv{Core Data Science}, \orgname{Verafin}, \orgaddress{\state{St. John's, NL}, \country{Canada}}}

\address[4]{\orgdiv{Department of Mathematics and Statistics}, \orgname{Memorial University of Newfoundland}, \orgaddress{\state{St. John's, NL}, \country{Canada}}}

\corres{*Stephanie Friedhoff, Department of Mathematics, Bergische Universit\"at Wuppertal, Gau{\ss}str. 20, 42119 Wuppertal, Germany. \email{friedhoff@math.uni-wuppertal.de}}

\abstract[Summary]{Parallel-in-time algorithms have been successfully
  employed for reducing time-to-solution of a variety of partial
  differential equations, especially for diffusive (parabolic-type)
  equations. A major failing of parallel-in-time approaches to date,
  however, is that most methods show instabilities or poor convergence
  for hyperbolic problems. This paper focuses on the analysis of the
  convergence behavior of multigrid methods for the parallel-in-time
  solution of hyperbolic problems. Three analysis tools are considered
  that differ, in particular, in the treatment of the time dimension:
  (1) space-time local Fourier analysis, using a Fourier ansatz in
  space and time, (2) semi-algebraic mode analysis, coupling standard
  local Fourier analysis approaches in space with algebraic
  computation in time, and (3) a two-level reduction analysis,
  considering error propagation only on the coarse time grid. In this
  paper, we show how insights from reduction analysis can be used to
  improve feasibility of the semi-algebraic mode analysis, resulting
  in a tool that offers the best features of both analysis
  techniques.  Following validating numerical results, we investigate
  what insights the combined analysis framework can offer for two
  model hyperbolic problems, the linear advection equation in one
  space dimension and linear elasticity in two space dimensions.}

\keywords{Parallel-in-time, Multigrid reduction in time, Parareal,
  Mode analysis, Hyperbolic PDEs}

\maketitle

%
%
\section{Introduction}\label{sec:intro}

While spatial parallelism is a well-known tool in scientific
computing, hardware trends and scaling limits have served to renew
interest in algorithms that also allow ``space-time'' parallelism.
These techniques consider the solution of time-dependent systems of
partial differential equations (PDEs), and aim to compute the solution
in an ``all-at-once'' manner, breaking the sequential nature of
traditional time-stepping approaches.  While this is not a new
idea\cite{JNievergelt_1964, Gander2015_Review}, recent years have seen
significant effort devoted to space-time and time-parallel approaches
for the solution of time-dependent PDEs\cite{RDFalgout_etal_2014, MJGander_MNeumueller_2014, OngEtAl2016,
  BoltenEtAl2017, ANielsen_etal_2018a, AHessenthaler_etal_2018,
  Franco2018, HDeSterck_etal_2018a}.  With such intense interest in
the development of new schemes, there is a pressing need for
complementary analysis tools, to provide understanding of the relative
performance of related schemes and to inform the optimization of
algorithmic parameters as schemes are adapted to new problems.  The
central aim of this paper is to compare and contrast three such
analysis schemes for parallel-in-time algorithms of the
Parareal\cite{MGander_SVandewalle_2007, JLLions_etal_2001a} and
multigrid-reduction-in-time\cite{RDFalgout_etal_2014} (MGRIT)
methodologies.

From the multigrid perspective, mode analysis is an attractive tool
for analyzing convergence of these methods, using either eigenvectors
or Fourier modes (and related techniques) to predict convergence
rates.  In the context of space-time discretizations, three approaches
to mode analysis have been discussed in the recent literature:
space-time local Fourier
analysis\cite{SFriedhoff_etal_2012a,SVandewalle_GHorton_1995,
  MJGander_MNeumueller_2014} (LFA), semi-algebraic mode
analysis\cite{SFriedhoff_SMacLachlan_2015a, Franco2018, FGaspar_CRodrigo_2017a} (SAMA), and reduction-based
theory\cite{VADobrev_etal_2017,
  AHessenthaler_etal_2018,Hessenthaler_etal_2018a, BSouthworth_2018a}.  A fourth approach, Fourier-Laplace mode
analysis\cite{STaasan_HZhang_1995a, STaasan_HZhang_1995b,
  JJanssen_SVandewalle_1996a}, can be seen as an intermediate between
space-time LFA and the other two approaches, and will be discussed
only in that context.  While not considered here, the recent analysis
of Gander et al.\cite{Gander_etal_2018} extends the earlier result
of Gander and Hairer \cite{Gander_Hairer_2008a} from the Parareal
context to MGRIT.  These approaches bound the convergence of the
algorithm based on Lipschitz continuity and other properties of the
time propagators at the coarse and fine levels; while this offers
insight into the general convergence properties of the algorithms, it
is difficult to compare fairly to the mode analysis tools that are the
focus here.

In broad terms, the advantages and disadvantages of the three mode
analysis tools are as follows.  LFA is a well-known and widely used
tool for analysis of spatial multigrid
methods\cite{RWienands_WJoppich_2005, KStuben_UTrottenberg_1982a}, for
which it allows quantitative predictions of two- and multi-level
convergence.  However, numerical
experience\cite{SFriedhoff_etal_2012a, SVandewalle_GHorton_1995} shows
that it is not predictive for space-time methods in many situations, particularly for meshes
with few points in time or discretizing ``short'' temporal domains.
The extension of LFA to semi-algebraic mode
analysis\cite{SFriedhoff_SMacLachlan_2015a} overcomes this limitation
on space-time LFA; however, as implemented by Friedhoff and
MacLachlan, the quantitative estimates of SAMA required the
computation of Euclidean operator norms of matrices with size equal to
the number of time steps multiplied by the dimension of the spatial
LFA symbol.  When done exactly, this can be prohibitively expensive
for complicated problems over long time intervals.  Two-level
reduction analysis\cite{VADobrev_etal_2017, AHessenthaler_etal_2018}
overcomes this computational expense, under certain conditions, but is
based on solving eigenvalue problems of size equal to the number of
degrees of freedom in the spatial discretization.  In the results
below, we explore each of these methods for two specific hyperbolic
model problems, linear advection in one dimension and incompressible
linear elasticity in two dimensions, and compare and contrast their
predictions in these settings.

Several important observations come from this comparison.  First, we
see that the bounds used to make the reduction analysis
computationally tractable can be directly applied to SAMA.  This
results in a tool that combines the best of both approaches, using the
flexibility of spatial LFA in settings where the spatial eigenproblem
is intractable, but the ease of computing a bound for large
numbers of time steps that comes from reduction analysis.  Secondly,
we expose the complications of assuming unitary diagonalizability, as
is done in the reduction analysis, and detail how this can be avoided
in the SAMA bound.  Overall, this allows us to get more insight into
MGRIT convergence, particularly as we vary both spatial and temporal
problem sizes.  \revision{Such insight, in turn, can enable the development of improved MGRIT algorithms for difficult problems, as has recently been done for high-order explicit discretizations of the linear advection equation \cite{HDeSterck_etal_2019}.}     A final benefit is the ability to critically compare
space-time LFA with the improved SAMA analysis, and gain better
insight into problems and parameter regimes for which space-time LFA
may be a feasible and sufficiently accurate option.

The remainder of this paper is organized as follows.  Details of the
model problems, linear advection in one spatial dimension and
incompressible linear elasticity in two spatial dimensions, are given
in Section \ref{sec:problems}.  The Parareal and MGRIT algorithms are reviewed in
Section \ref{sec:pint_methods}.  Section \ref{sec:analysis_tools}
describes the three mode analysis tools in detail, with discussion on
how to combine aspects of SAMA and reduction analysis in Section
\ref{ssec:comparison}.  Numerical results comparing the analysis methods as
they exist in the literature are given in Section
\ref{sub:comparison}.  Sections \ref{sub:investigating_adv} and
\ref{sub:investigating_elasticity} present numerical results giving
new insight into convergence of the Parareal and MGRIT algorithms for linear advection
and elasticity, respectively, based on SAMA improved by insights from
reduction analysis. Concluding remarks are given in Section \ref{sec:conclusions}.

%
%
\section{Model problems}\label{sec:problems}

\revision{In this section, we discuss the details of the two model problems for which the analysis tools will be compared in Section \ref{sec:num_results}.  For linear advection, we consider a simple upwinding-in-space and implicit-Euler-in-time discretization, corresponding to that used by De Sterck et al. \cite{HDeSterck_etal_2018a}  For linear elasticity, we use a mixed-finite-element-in-space and implicit-Euler-in-time discretization corresponding to that used by Hessenthaler et al. \cite{AHessenthaler_etal_2018}  We note that ``better'' (primarily less diffusive) discretizations of both equations certainly exist, but (to our knowledge) successful application of Parareal and MGRIT to those discretizations either requires a modification of the algorithm (e.\,g., the PITA framework\cite{CFarhat_etal_2006}) or is not well documented in the published literature, aside from some forthcoming results\cite{HDeSterck_etal_2019}.  The analysis presented herein can be applied to such discretizations in the same manner as used here; we leave such details to future work, to reflect the (hopefully) continued success of developing effective parallel-in-time algorithms for a wider class of discretizations and problems.
}

\subsection{Linear advection}
We consider the advection of a scalar quantity, $u(x,t)$, subject to a
known non-zero constant flow speed, $c$, in the domain $\Omega \times
[0,T]$, where $\Omega$ is a finite interval, $[a,b]$, and $T$ denotes
the final time. For example, this models advection of nonreactive
particles by an incompressible fluid, where particle density,
$u(x,t)$, depends only on advection of particles by the fluid. The governing equation is given by
\begin{equation}
	u_{t} + cu_x = 0 \text{ in } \Omega \times [0,T],\label{eq:advection}
\end{equation}
where we assume $c > 0$ for the subsequent discussion. We prescribe the initial condition $u(x,0) = u_0(x)$ and impose the periodic boundary condition $u(a,t) = u(b,t)$ in all that follows. We discretize \eqref{eq:advection} on a uniform space-time grid, with $N_x$ spatial intervals of width ${\Delta x} = (b-a)/N_x$ and $N_t$ temporal intervals of time step $\Delta t = T/N_t$, using a first-order implicit upwind scheme:
\begin{equation}
\left(1 + \frac{c\Delta t}{{\Delta x}}\right) u_{j,i} - \frac{c\Delta t}{{\Delta x}}u_{j-1,i} = u_{j,i-1}, \label{eq:advection:discrete} \quad i=1,\ldots,N_t, ~j=0,1,\ldots,N_x.
\end{equation}


\subsection{Linear elasticity} 
We consider the dynamic and linear elastic response of an
incompressible solid structure in the domain $\Omega\times [0,T]$,
where $\Omega$ is an open domain in $\mathbb{R}^2$, and $T$ denotes
the final time. Denoting the current and reference position of a
material point by $x$ and $X$, respectively, and the respective
Eulerian and Lagrangian gradient operators by $\nabla$ and $\nabla_X$,
we define the deformation gradient, $F$, by $F = \nabla_X x = I +
\nabla_X u$, where $x(X,t) = X + u(X,t)$ defines the displacement,
$u$, of the material point, $x$, in the current configuration at time $t$ with respect to its position in the reference configuration, $X$, and where $I$ denotes the identity matrix of corresponding size. Then, the governing equations are given by
\begin{align}
	\rho u_{tt} - \nabla_X\cdot\sigma &= 0 && \text{in } \Omega \times [0,T],\label{eq:elasticity:2nd:order}\\
	\det F &= 1 && \text{in } \Omega\times [0,T],
\end{align}
where $\rho$ denotes material density, and where $\sigma = \sigma(u,p)
= \mu(F-I) - pI$ is the Cauchy stress tensor for an incompressible linear elastic material with stiffness parameter, $\mu$, and hydrostatic pressure, $p$. We prescribe $u$ and $u_t$ at $t = 0$, $u(X,0) = 0$ and $u_t(X,0) = \widehat{v}_0$, and impose homogeneous Dirichlet boundary conditions, $u(X,t) = 0$ for $X\in\Gamma^D$ and $t\in[0,T]$, where $\Gamma^D$ denotes the Dirichlet boundary of the domain $\Omega$. Using $\sigma = \mu\nabla_X u - pI$, neglecting higher-order effects of the deforming domain, and transforming Equation \eqref{eq:elasticity:2nd:order} to a system of first-order equations, we obtain
\begin{align}
	\rho v_t &= \mu\nabla^2 u - \nabla p && \text{in } \Omega\times [0,T],\label{eq:vt}\\
	u_t &= v && \text{in } \Omega\times [0,T],\label{eq:ut:v}\\
	\nabla\cdot v &= 0 && \text{in } \Omega\times [0,T]\label{eq:incompr}
\end{align}
for displacement, $u$, velocity, $v$, and hydrostatic pressure, $p$.

We discretize Equations \eqref{eq:vt}-\eqref{eq:incompr} on an
equidistant time grid consisting of $N_t\in\mathbb{N}$ time intervals
using a time-step size $\Delta t = T/N_t$. Motivated by existing results\cite{AHessenthaler_etal_2018}, we use implicit Euler for the time discretization. Denoting displacement, velocity, and pressure at time $t_i = i\Delta t$, $i = 0, \ldots, N_t$, by $u_i, v_i$, and $p_i$, respectively, for $i = 1, \ldots, N_t$, Equations \eqref{eq:vt} and \eqref{eq:ut:v} are discretized as
\begin{equation}\label{eq:vt:time:discr}
	\rho v_i - \Delta t\mu\nabla^2 u_i + \Delta t\nabla p_i = \rho v_{i-1},
\end{equation}
and
\begin{equation}\label{eq:ut:v:time:discr}
	u_i - \Delta t v_i= u_{i-1}.
\end{equation}

The weak form of Equations \eqref{eq:vt:time:discr},
\eqref{eq:ut:v:time:discr}, and \eqref{eq:incompr}
is found by multiplying by test functions, $\chi\in (H^1(\Omega))^2$,
$\phi\in (L_2(\Omega))^2$, and $\psi\in L^2(\Omega)$, respectively, giving
\begin{align}
	\langle \rho v_i -\Delta t\mu\nabla^2u_i+\Delta t\nabla p_i, \chi \rangle &= \langle \rho v_{i-1}, \chi\rangle && \forall~\chi\nonumber\\
	\Leftrightarrow\quad \rho \langle v_i, \chi\rangle + \Delta t^2\mu\langle \nabla v_i, \nabla \chi\rangle - \Delta t\langle p_i, \nabla\cdot\chi\rangle &= \rho\langle v_{i-1}, \chi\rangle - \Delta t\mu\langle \nabla u_{i-1}, \nabla\chi\rangle && \forall~\chi\label{eq:vt:weak},\\
	\langle u_i, \phi\rangle - \Delta t\langle v_i,\phi\rangle &= \langle u_{i-1},\phi\rangle && \forall~\phi\label{eq:ut:v:weak}\\
	-\langle \nabla\cdot v_i, \psi\rangle & = 0 && \forall~\psi\label{eq:incompr:weak}.
\end{align}

We discretize the spatial domain, $\Omega$, on a uniform quadrilateral
grid with mesh size ${\Delta x}$ using Taylor-Hood $(Q2 - Q1)$ elements
\cite{DBraess_2001, SCBrenner_LRScott_1994a} for velocity, $v$, and
pressure, $p$, and $Q2$ elements for displacement, $u$, ensuring
uniqueness of the solution. Denoting the mass and stiffness matrices
of the discretized vector Laplacian by $M$ and $K$, respectively, and
the (negative) discrete divergence operator by $B^T$, Equations \eqref{eq:vt:weak}-\eqref{eq:incompr:weak} are discretized to
\begin{align}
	\rho Mv_i + \Delta t^2 \mu Kv_i + \Delta t B p_i &= \rho Mv_{i-1} - \Delta t\mu Ku_{i-1}\label{eq:vt:discrete},\\
	Mu_i - \Delta t Mv_i&= Mu_{i-1},\label{eq:ut:v:discrete}\\
	\Delta t B^Tv_i &= 0,\label{eq:incompr:discrete}
\end{align}
which are equivalent to the following linear system of equations
\begin{equation}\label{eq:lin:system}
\begin{bmatrix}
	\rho M + \Delta t^2\mu K & 0 & \Delta t B\\
	-\Delta t M & M & 0 \\
	\Delta t B^T & 0 & 0
\end{bmatrix}\begin{bmatrix}
	v_i\\
	u_i\\
	p_i
\end{bmatrix}=
\begin{bmatrix}
	\rho Mv_{i-1} - \Delta t\mu Ku_{i-1}\\
	Mu_{i-1}\\
	0 
\end{bmatrix}. 
\end{equation}

It is tempting to simply remove the rows and columns corresponding to
the pressure variable from this system, since $p_{i-1}$ does not
appear in the equations at time step $t_i$.  Indeed, this approach was
taken in the analysis by Hessenthaler et al.\cite{AHessenthaler_etal_2018}.  
However, to do so ignores the important role that $p_i$ plays as a Lagrange
multiplier, enforcing the incompressiblity constraint for the solution
at time step $t_i$ (but was properly accounted for in simulations).  
Instead, we eliminate the contribution from this
block by considering the block factorization of the matrix in
\eqref{eq:lin:system} and the resulting Schur complement.  
Factoring this matrix gives
\[
\begin{bmatrix}
	\rho M + \Delta t^2\mu K & 0 & \Delta t B\\
	-\Delta t M & M & 0 \\
	\Delta t B^T & 0 & 0
\end{bmatrix}
=
\begin{bmatrix}
	\rho M + \Delta t^2\mu K & 0 & 0\\
	-\Delta t M & M & 0 \\
	\Delta t B^T & 0 & -\Delta t^2 S
\end{bmatrix}
\begin{bmatrix}
	I & 0 & \Delta t (\rho M + \Delta t^2\mu K)^{-1}B\\
	0 & I & \Delta t^2 (\rho M + \Delta t^2\mu K)^{-1}B \\
	0 & 0 & I
\end{bmatrix},
\]
for $S = B^T(\rho M + \Delta t^2\mu K)^{-1}B$.  Thus, $p_i$ satisfies
\[
-\Delta t^2 Sp_i = -\Delta t B^T(\rho M + \Delta
t^2 \mu K)^{-1}\left(\rho Mv_{i-1} - \Delta t\mu Ku_{i-1}\right),
\]
and can be directly eliminated from the equation for $v_i$ by
subtracting $\Delta t B p_i$ from the right-hand side of
\eqref{eq:vt:discrete}.  Noting that both velocity and displacement 
are two-dimensional vector fields of $Q2$ degrees of freedom, this yields 
the reduced system with four scalar functions,
\begin{align*}
\begin{bmatrix}
	\rho M + \Delta t^2\mu K & 0 \\
	-\Delta t M & M \\
\end{bmatrix}\begin{bmatrix}
	v_i\\
	u_i
\end{bmatrix}& =
\begin{bmatrix}
	\left(I - BS^{-1}B^T(\rho M + \Delta t^2 \mu K)^{-1}\right)\left(\rho Mv_{i-1} - \Delta t\mu Ku_{i-1}\right)\\
	Mu_{i-1}
\end{bmatrix}\\
& =
\begin{bmatrix}
	\rho\left(I - BS^{-1}B^T(\rho M + \Delta t^2 \mu
        K)^{-1}\right)M &  - \Delta t\mu \left(I - BS^{-1}B^T(\rho M + \Delta t^2 \mu
        K)^{-1}\right)K\\
	0 & M
\end{bmatrix}
\begin{bmatrix}
	v_{i-1}\\
	u_{i-1}
\end{bmatrix}.
\end{align*}
It is this form of the propagator that we analyse below.  We note that
although the only nonzero operator acting on the pressure acts on
$p_i$, its effect is to change the dependence of $v_i$ and $u_i$ on
$v_{i-1}$ and $u_{i-1}$.  This can easily be seen to take
the form of an orthogonal projection operator acting on the data from
the previous time-step; it is also easy to check that this formulation
guarantees that $B^Tv_{i} = 0$, as required by the incompressibility
constraint.

As is common in MGRIT, we primarily analyse this propagator in
``$\Phi$-form'', i.\,e., in the form $[v_i ~u_i]^T = \Phi[v_{i-1}~u_{i-1}]^T$, 
writing the projection operator, $P =I-(\rho M + \Delta t^2 \mu K)^{-1}BS^{-1}B^T$, 
to give 
\begin{align}
\begin{bmatrix}
	v_i\\
	u_i
\end{bmatrix}& =
\begin{bmatrix}
	\rho M + \Delta t^2\mu K & 0 \\
	-\Delta t M & M \\
\end{bmatrix}^{-1}\begin{bmatrix}
	\rho\left(I - BS^{-1}B^T(\rho M + \Delta t^2 \mu
        K)^{-1}\right)M &  - \Delta t\mu \left(I - BS^{-1}B^T(\rho M + \Delta t^2 \mu
        K)^{-1}\right)K\\
	0 & M
\end{bmatrix}
\begin{bmatrix}
	v_{i-1}\\
	u_{i-1}
\end{bmatrix} \nonumber\\
& =
\begin{bmatrix}
\rho P \left(\rho M + \Delta t^2\mu K\right)^{-1}M & -\Delta t\mu
P\left(\rho M + \Delta t^2\mu K\right)^{-1}K \\
\rho\Delta tP \left(\rho M + \Delta t^2\mu K\right)^{-1}M & -(\Delta t)^2\mu
P\left(\rho M + \Delta t^2\mu K\right)^{-1}K+I
\end{bmatrix}
\begin{bmatrix}
	v_{i-1}\\
	u_{i-1}
\end{bmatrix}.\label{eq:Phi_form}
\end{align}

%
%
\section{Parallel-in-time methods}\label{sec:pint_methods}

\subsection{Parareal}\label{sub:Parareal}
The Parareal algorithm \cite{JLLions_etal_2001a} is a parallel method
for solving systems of ordinary differential equations of the form
\begin{equation}\label{eq:ode:system}
	u'(t) = f(t,u(t)), \quad u(0) = g_0, \quad t\in[0,T],
\end{equation}
arising, for example, when solving a system of PDEs using a
method-of-lines approximation to discretize the spatial
domain. Parareal can be interpreted in a variety of ways, including as
multiple shooting, domain decomposition, and multigrid methods
\cite{MGander_SVandewalle_2007, RDFalgout_etal_2014}. Here, we
describe Parareal as a two-level time-multigrid scheme. For ease of
presentation, we only describe the method in a linear setting, i.\,e.,
in the case that $f$ is a linear function of $u(t)$; the full
approximation storage (FAS) approach \cite{ABrandt_1977b} can be
applied in the same manner to accomodate nonlinear problems.

Parareal combines time stepping on the discretized temporal domain,
the fine grid, with time stepping on a coarser temporal grid that uses
a larger time step. More precisely, consider a fine temporal grid
with points $t_i = i\Delta t$, $i=0,1,\ldots,N_t$, with constant time
step $\Delta t=T/N_t>0$, and let $u_i$ be an approximation to $u(t_i)$
for $i=1,\ldots,N_t$, with $u_0 = u(0)$. Further, consider a one-step
time integration method,
\[
u_i = \Phi u_{i-1} + g_i, \quad i = 1,\ldots, N_t,
\]
with time-stepping operator, $\Phi$, that takes a solution at time
$t_{i-1}$ to that at time $t_i$, along with a time-dependent forcing
term, $g_i$.  (Note that both the assumption of constant time step and
of a time-independent single-step time-stepping operator are for
notational convenience only, and can easily be relaxed.)  The discrete
approximation to the solution of \eqref{eq:ode:system} can be
represented as a forward solve of the block-structured linear system
\begin{equation}\label{eq:fine:linear:system}
	Au\equiv\begin{bmatrix}
		I\\
		-\Phi & I\\
		& \ddots & \ddots\\
		& & -\Phi & I
	\end{bmatrix} \begin{bmatrix}
		u_0\\
		u_1\\
		\vdots\\
		u_{N_t}
	\end{bmatrix} = \begin{bmatrix}
		g_0\\
		g_1\\
		\vdots\\
		g_{N_t}
	\end{bmatrix}\equiv g.
\end{equation}
Note that, in the time dimension, this forward solve is completely sequential.

Parareal enables parallelism in the solution process by introducing a coarse temporal
grid, or (using multigrid terminology) a set of C-points, derived from the original
(fine) temporal grid by considering only every $m$-th temporal point, where $m>1$
is an integer called the coarsening factor. Thus, the coarse temporal grid consists
of $N_T=N_t/m$ intervals, with points $T_j=j\Delta T$, $j=0,1,\ldots,N_T$, with
spacing $\Delta T = m\Delta t$; the remaining temporal points define the set of
F-points, as visualized in Figure \ref{fig:fine:coarse:t:grid}.

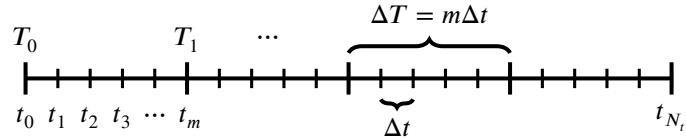
\begin{figure}[h!t]
	\centering
	\begin{tikzpicture}[scale=.85]
		\draw[line width=1.5pt] (0,0) -- (10,0);
		\foreach \i in {0,.5,1,...,10}{
			\draw[line width=1.15pt] (\i,.15) -- (\i,-.15);
		}
		\foreach \i in {0,2.5,5,7.5,10}{
			\draw[line width=1.5pt] (\i,.25) -- (\i,-.25);
		}
		\foreach \i/\n in {0/0,.5/1,1/2,1.5/3}{
			\draw (\i,-.6) node {$t_\n$};
		}
		\draw (2,-.6) node {$\cdots$};
		\draw (2.55,-.6) node {$t_m$};
		\draw (10,-.6) node {$t_{N_t}$};
		\draw (0,.6) node {$T_0$};
		\draw (2.5,.6) node {$T_1$};
		\draw (3.75,.6) node {$\cdots$};
			
		\draw[line width=1.5pt,,decorate,decoration={brace,amplitude=3pt},yshift=5pt] (6,-.5) -- (5.5,-.5) node[below=3pt,midway] {$\Delta t$};
		\draw[line width=1.5pt,,decorate,decoration={brace,amplitude=5pt},yshift=-3pt] (5,.5) -- (7.5,.5) node[above=7pt,midway] {$\Delta T = m\Delta t$};
	\end{tikzpicture}
	\caption{Fine- and coarse time discretization meshes.}
	\label{fig:fine:coarse:t:grid}
\end{figure}

The coarse-grid problem, $A_c u_\Delta=g_\Delta$, is defined by
considering a one-step method with time-stepping operator, $\Phi_c$,
using time step $\Delta T$, given by
\[
	u_{\Delta,j} = \Phi_c u_{\Delta,j-1} + g_{\Delta,j}, \quad j = 1, \ldots, N_T,
\]
where $u_{\Delta,j}$ denotes an approximation to $u(T_j)$ for $j = 1,\ldots, N_T$, and $u_{\Delta,0} = u(0)$. 

Rather than sequentially solving for each $u_i$, Parareal alternates
between a ``relaxation scheme'' on the F-points and a sequential solve
on the C-points.  The first of these two processes, known as
F-relaxation, updates the unknowns at F-points by propagating the
values from each C-point, $T_j$, across a coarse-scale time interval,
$(T_j, T_{j+1})$, $j=0,1,\ldots,N_T-1$ using the fine-grid
time-stepper, $\Phi$. Note that within each coarse-scale time
interval, these updates are sequential, but that there is no
dependency across coarse time intervals, enabling parallelism in the
relaxation process.  After F-relaxation, the residual is evaluated
only at the C-points and ``injected'' to the coarse temporal grid.
After a sequential solve of the coarse-grid equations (for which
typical choices in Parareal use high-order integration schemes over
longer time intervals), a correction is interpolated to the fine grid,
using ``ideal interpolation'' (which is ``ideal'' as it corresponds to
using the Schur complement on the coarse grid).  This interpolation
operator is defined by taking the corrected approximate solution at at
each C-point, $T_j$, and time-stepping across the coarse-scale time
interval, $(T_j, T_{j+1})$, $j=0,1,\ldots,N_T-1$, again using the
fine-grid time-stepper, $\Phi$.  The injection and ideal interpolation
operators are block operators of size $(N_T+1)\times(N_t+1)$ or
$(N_t+1)\times(N_T+1)$, respectively, given by
\begin{center}
	\begin{tikzpicture}
		\node {$\setcounter{MaxMatrixCols}{20}
		R_I = \begin{bmatrix}
		I & 0 & \cdots & 0\\
		& & & & I & 0 & \cdots & 0\\
		& & & & & & & & \ddots\\
		& & & & & & & & & I & 0 & \cdots & 0 &\\
		& & & & & & & & & & & & & I
	\end{bmatrix}\quad\text{and}\quad P_\Phi = \begin{bmatrix}
		Z^{(\Phi)}\\
		& Z^{(\Phi)}\\
		&& \ddots\\
		&&& Z^{(\Phi)}\\
		&&&& I
	\end{bmatrix} ~\text{with } Z^{(\Phi)} = \begin{bmatrix}
		I\\
		\Phi\\
		\vdots\\
		\Phi^{m-1}
	\end{bmatrix}.
		$};
		\draw[line width=1.15pt,decorate,decoration={brace,amplitude=3pt},yshift=5pt] (-5.1,.5) -- (-6.5,.5) node[below=3pt,midway] {\scriptsize$m$ blocks};
	\end{tikzpicture}
\end{center}
As the error propagator of F-relaxation can be written as $P_\Phi R_I$, the Parareal algorithm may be represented by the two-level iteration matrix,
\begin{equation}\label{eq:Parareal:it:matrix}
	E^F = (I-P_\Phi A_c^{-1}R_I A)P_\Phi R_I = P_\Phi(I-A_c^{-1}A_S)R_I,
\end{equation}
where equivalence holds since $R_IAP_\Phi$ defines the Schur complement coarse-grid operator,
\[
	A_S = \begin{bmatrix}
		I\\
		-\Phi^m & I\\
		& \ddots & \ddots\\
		& & -\Phi^m & I
	\end{bmatrix}.
\]


\subsection{Multigrid-reduction-in-time}
The major sequential bottleneck for Parareal is the sequential solve
of the coarse-grid system, $A_cu_\Delta = g_\Delta$.  Directly
applying multigrid principles and using Parareal recursively to solve
this system is observed to often have significantly degraded
convergence properties.  This motivates the
multigrid-reduction-in-time (MGRIT) algorithm
\cite{RDFalgout_etal_2014}, which is based on applying multigrid
reduction techniques
\cite{MRies_UTrottenberg_1979,MRies_UTrottenberg_GWinter_1983a} to the
time dimension. Like Parareal, MGRIT uses injection and ideal
interpolation for intergrid transfer operations, and it uses the same
coarse-grid operator, $A_c$, as can be used in the two-level Parareal
method. However, in order to obtain a scalable multilevel algorithm,
MGRIT augments F-relaxation, using relaxation also at the
C-points. This C-relaxation is defined analogously to F-relaxation, by
leaving the values of F-points unchanged, and updating the unknowns at
C-points using the time-stepper, $\Phi$, and values at neighboring
F-points. Relaxation in MGRIT consisting of combined sweeps of F- and
C-relaxation; typically, FCF-relaxation, F-relaxation, followed by
C-relaxation, and again F-relaxation, is employed.  This scheme can be
shown to be equivalent to Richardson relaxation on the coarse time
grid with the Schur complement coarse-grid operator, $A_S$, followed
by ideal interpolation\cite{RDFalgout_etal_2014}.

Writing the error propagator of FCF-relaxation as $P_\Phi(I-A_S)R_I$, the two-level MGRIT algorithm may be represented by the two-level iteration matrix,
\begin{equation}\label{eq:two:level:mgrit}
	E^{FCF} = (I-P_\Phi A_c^{-1}R_I A)P_\Phi(I-A_S)R_I = P_\Phi(I-A_c^{-1}A_S)(I-A_S)R_I.
\end{equation}
Instead of inverting $A_c$ directly as in the two-level MGRIT
algorithm \eqref{eq:two:level:mgrit}, in the three-level method, the
system on the first coarse grid is approximated by a two-grid cycle
with zero initial guess, that is, the term $A_c^{-1}$ in the two-level
iteration matrix \eqref{eq:two:level:mgrit} is replaced by
$(I_c-E^{(2,3)-FCF})A_c^{-1}$ (where $E^{(2,3)-FCF}$ is defined
analogously to \eqref{eq:two:level:mgrit}) to obtain the three-level
V-cycle iteration matrix,
\begin{equation}
	E^{V(1,3)-FCF} = \left(I - P_\Phi \left(I_c - E^{(2,3)-FCF}\right)A_c^{-1}R_IA\right)P_\Phi(I-A_S)R_I,
\end{equation}
or, it is replaced by $(I_c-E^{(2,3)-FCF}E^{(2,3)-FCF})A_c^{-1}$ to
obtain the three-level F-cycle (or, equivalently, W-cycle) iteration
matrix
\begin{equation}
	E^{F(1,3)-FCF} = \left(I - P_\Phi \left(I_c - E^{(2,3)-FCF}E^{(2,3)-FCF}\right)A_c^{-1}R_IA\right)P_\Phi(I-A_S)R_I,
\end{equation}
respectively.  Iteration matrices of three-level V- and F-cycle
Parareal methods can be derived analogously.

While there are clear and important differences, both historical and
in practice, between Parareal and MGRIT, in this paper we view them as
being two methods within the same broader family.  As such, we will
consider in detail only the case where the coarse-grid operator,
$A_c$, is given by rediscretization of the fine-grid operator with the
coarse time-step.  Since Parareal methods often make use of different
coarse- and fine-propagators, we will instead refer to the methods
where this condition is imposed as MGRIT with F- and
FCF-relaxation, instead of as Parareal and MGRIT, respectively,
but use the more distinctive terminology for statements that are true
regardless of the choice of coarse propagator. An interesting property of 
Parareal and MGRIT is the following exactness property: Assuming
exact arithmetic, one iteration of F-relaxation computes the exact
solution at the first $m-1$ time steps, i.\,e., at all F-points in the
first coarse-scale time interval. Furthermore, one iteration of
FCF-relaxation computes the exact solution at the first $2m-1$ time
steps. Therefore, MGRIT with F-relaxation solves for the exact
solution in $N_t/m$ iterations, while MGRIT with FCF-relaxation solves 
for the exact solution in $N_t/(2m)$ iterations, corresponding to half the 
number of points on the coarse grid.

%
%
\section{Mode analysis tools for parallel-in-time methods}\label{sec:analysis_tools}
While both Parareal and MGRIT can be analysed based on abstracted
properties of the coarse and fine time
integrators\cite{Gander_Hairer_2008a, Gander_etal_2018}, these tools
do not provide deep insight into the success or failure of algorithmic
choices within the methodology.  Instead, here we present three tools
based on mode analysis principles applied to the iteration matrices,
$E^F$ and $E^{FCF}$, given in Equations \eqref{eq:Parareal:it:matrix}
and \eqref{eq:two:level:mgrit}, respectively, and to the iteration matrices
 of the respective three-level methods.

\subsection{Space-time local Fourier analysis}
Local Fourier analysis is a classical tool for analysing convergence
of multigrid methods\cite{ABrandt_1977b, KStuben_UTrottenberg_1982a},
that has been applied to space-time problems with mixed
results in terms of accuracy in predictions \cite{SFriedhoff_etal_2012a, MJGander_MNeumueller_2014}.  
Here we present a brief review of the calculation of spatial Fourier
symbols (as will be used in all of the mode analysis tools considered
in this section), along with their coupling with local Fourier
analysis in the time direction.

\subsubsection{Spatial Fourier symbols}
The local Fourier analysis of the time-stepping operator, $\Phi$,
makes use of its Fourier representation, also called the \emph{Fourier
  symbol}.  For the scalar advection equation, this follows from
standard analysis; however, LFA for systems of PDEs requires more
advanced tools\cite{UTrottenberg_etal_2001a,
  SPMacLachlan_CWOosterlee_2011a}. Note that we do not consider any
coarsening of the spatial domain here \revision{to limit the scope of the paper} 
and, thus, we only need the
spatial Fourier symbol of the time-stepping operator. 

Standard LFA for a problem in one spatial dimension considers a scalar
discrete problem posed on an infinite uniform grid with mesh size ${\Delta x}>0$,
\[
G_{\Delta x} = \left\{x_j = j{\Delta x} : j \in \mathbb{Z}\right\}.
\]
Extending a constant-coefficient discrete operator from a finite
spatial domain, such as that on the left of Equation
\eqref{eq:advection:discrete}, to the infinite grid $G_{\Delta x}$ leads to a
Toeplitz operator acting on the space of square-summable sequences
($\ell_2$) that is formally diagonalizable by standard Fourier modes,
\begin{alignat*}{4}
	\varphi(\theta,x_{j}) & ~=~ & \e^{\imath\theta\cdot x_{j}/{\Delta x}} \quad & \text{for } \theta\in(-\pi,\pi].
\end{alignat*}
Noting that the operator $\Phi$ is the inverse of the lower bidiagonal
matrix on the left of \eqref{eq:advection:discrete}, we find that its Fourier symbol, denoted $\widetilde{\Phi}_\theta$, is the scalar
\begin{equation}
\widetilde{\Phi}_\theta = \left[1 + \dfrac{c \Delta t}{{\Delta x}}\left(1-e^{-\imath\theta}\right)\right]^{-1}.
\label{eq:advection:phi:symbol}
\end{equation}
Furthermore, the Fourier symbols for the time integrators $\Phi_c$ and
$\Phi_{cc}$ on the first and second coarse grids can be obtained by
replacing $\Delta t$ by $m\Delta t$ in \eqref{eq:advection:phi:symbol}
to account for factor-$m$ temporal coarsening.  We note that for a
spatial problem with periodic boundary conditions, the eigenvalues of
$\Phi$ are given precisely by sampling $\widetilde{\Phi}_\theta$ at evenly
spaced points in $(-\pi,\pi]$; for other boundary conditions,  a good
  estimate of the discrete eigenvalues of $\Phi$ is given by the
  range of its Fourier symbol\cite{CRodrigo_etal_2018a}.

For the elasticity operator in \eqref{eq:Phi_form}, we must generalize
this analysis to consider a two-dimensional infinite uniform grid,
along with adaptations to handle the block structure imposed by the
mixed finite-element discretization.  Accordingly, we consider the
two-dimensional infinite uniform grid, decomposed as
\begin{equation}\label{eq:lfa:space:grid}
	G_{\Delta x} := G_{{\Delta x},N} \cup G_{{\Delta x},XE} \cup G_{{\Delta x},YE} \cup G_{{\Delta x},C},
\end{equation}
with
\begin{alignat*}{6}
	G_{{\Delta x},N} & ~=~ & \{\mathbf{x}_{j,k}^N = (j{\Delta x}, k{\Delta x}) : (j,k)\in\mathbb{Z}^2\}, \quad & G_{{\Delta x},XE} & ~=~ & \{\mathbf{x}_{j,k}^{XE} = ((j+1/2){\Delta x}, k{\Delta x}) : (j,k)\in\mathbb{Z}^2\},\\
	G_{{\Delta x},YE} & ~=~ & \{\mathbf{x}_{j,k}^{YE} = (j{\Delta x}, (k+1/2){\Delta x}) : (j,k)\in\mathbb{Z}^2\}, \quad & G_{{\Delta x},C} & ~=~ & \{\mathbf{x}_{j,k}^C = ((j+1/2){\Delta x}, (k+1/2){\Delta x}) : (j,k)\in\mathbb{Z}^2\}.
\end{alignat*}
This decomposition arises by considering the variation in basis
functions for the standard $Q2$ approximation space, with nodal basis
functions defined for mesh points at corners of ${\Delta x}\times {\Delta x}$ quadrilateral elements, $\mathbf{x}^N\in G_{{\Delta x},N}$,
mesh points located on the $x$- and $y$-edges of elements,
$\mathbf{x}^{XE}\in G_{{\Delta x},XE}$ or $\mathbf{x}^{YE}\in G_{{\Delta x},YE}$, and
cell-centered mesh points, $\mathbf{x}^C\in G_{{\Delta x},C}$.

While the finite-element discretization of a scalar PDE on a
structured two-dimensional infinite uniform grid using nodal $(Q1)$ finite elements
leads to an operator that is block Toeplitz with Toeplitz blocks (BTTB) that has a scalar symbol, the same is not
true for higher-order elements.  However, by using the same basis on
each element and permuting the operator into block form ordered by
``type'' of basis function, we readily obtain a block operator with
blocks that are themselves BTTB operators and can be used to generate a matrix-based Fourier
symbol\cite{TBoonen_etal_2008a, SPMacLachlan_CWOosterlee_2011a}.
On $G_{\Delta x}$, this arises by considering a discrete operator, $L_{\Delta x}$,
defined in terms of its stencil for each ``type'' of degree of freedom. For example, for the 
nodal degrees of freedom, we have 
\[
	L_{\Delta x}^N \widehat{=} [s^N_{\boldsymbol{\kappa}}]_{\Delta x}, ~\boldsymbol{\kappa}=(\kappa_1,\kappa_2)\in V; \qquad L_{\Delta x}^Nw_{\Delta x}(\mathbf{x}) = \sum_{\boldsymbol{\kappa}\in V} s^N_{\boldsymbol{\kappa}}w_{\Delta x}(\mathbf{x}+\boldsymbol{\kappa}{\Delta x}), ~w_{\Delta x}(\mathbf{x})\in\ell^2(G_{\Delta x}),
\]
with constant coefficients, $s_{\boldsymbol{\kappa}}\in\mathbb{C}$,
and $\boldsymbol{\kappa} = (\kappa_1, \kappa_2)$ taken from a finite
index set, $V := V_N \cup V_C \cup V_{XE} \cup V_{YE}$, with
$V_N\subset\mathbb{Z}^2$, $V_{XE}\subset\{(z_1+1/2, z_2) : (z_1,
z_2)\in\mathbb{Z}^2\}$, $V_{YE}\subset\{(z_1, z_2+1/2) : (z_1,
z_2)\in\mathbb{Z}^2\}$, and $V_C\subset\{(z_1+1/2, z_2+1/2) : (z_1,
z_2)\in\mathbb{Z}^2\}$.  Similar definitions are used for the discrete
operator acting on degrees of freedom corresponding to mesh edges and
cell centers. Note that the decomposition of the index set, $V$,
naturally defines $L_{\Delta x}^N$ as a block operator, with each block
corresponding to one type of mesh point. For the elasticity operator in 
\eqref{eq:Phi_form}, we obtain a block operator of size $16\times 16$, 
since there are four scalar functions in the system, each discretized using
Q2 elements with four types of 
degrees of freedom.

With this, the Fourier representation (symbol), of an operator, $L_{\Delta x}$, denoted by $\widetilde{L}_{\Delta x}(\boldsymbol{\theta})$, with $\boldsymbol{\theta}\in(-\pi,\pi]^2$, is a block matrix that can be computed using a Fourier basis that accounts for the different types of mesh points of $G_{\Delta x}$. This Fourier basis is given by
\begin{equation}
	\Span\left\{ \varphi_N(\boldsymbol{\theta},\mathbf{x}), \varphi_{XE}(\boldsymbol{\theta},\mathbf{x}), \varphi_{YE}(\boldsymbol{\theta},\mathbf{x}), \varphi_C(\boldsymbol{\theta},\mathbf{x}) \right\}
\end{equation}
for $\boldsymbol{\theta} \in (-\pi,\pi]^2$, with
\begin{align*}
	\varphi_N(\boldsymbol{\theta},\mathbf{x}) & ~=~
         \begin{cases}
            \e^{\imath\boldsymbol{\theta}\cdot\mathbf{x}/{\Delta
                x}} \quad & \text{for }
              \mathbf{x}\in G_{{\Delta x},N}\\
              0 & \text{for } \mathbf{x} \in G_{\Delta x} \setminus
              G_{{\Delta x},N} \end{cases} \quad
	&&\varphi_{XE}(\boldsymbol{\theta},\mathbf{x}) ~=~
         \begin{cases}
            \e^{\imath\boldsymbol{\theta}\cdot\mathbf{x}/{\Delta
                x}} \quad & \text{for }
              \mathbf{x}\in G_{{\Delta x},XE}\\
              0 & \text{for } \mathbf{x} \in G_{\Delta x} \setminus
              G_{{\Delta x},XE} \end{cases} \\
	\varphi_{YE}(\boldsymbol{\theta},\mathbf{x}) & ~=~
         \begin{cases}
            \e^{\imath\boldsymbol{\theta}\cdot\mathbf{x}/{\Delta
                x}} \quad & \text{for }
              \mathbf{x}\in G_{{\Delta x},YE}\\
              0 & \text{for } \mathbf{x} \in G_{\Delta x} \setminus
              G_{{\Delta x},YE} \end{cases} \quad
	&&\hspace*{.6em}\varphi_C(\boldsymbol{\theta},\mathbf{x}) ~=~
         \begin{cases}
            \e^{\imath\boldsymbol{\theta}\cdot\mathbf{x}/{\Delta
                x}} \quad & \text{for }
              \mathbf{x}\in G_{{\Delta x},C}\\
              0 & \text{for } \mathbf{x} \in G_{\Delta x} \setminus
              G_{{\Delta x},C} \end{cases}
\end{align*}
Considering the discretization of a system of PDEs with $q$ types of
degrees of freedom (corresponding to both different functions in the
PDE system and different basis functions used in higher-order
discretizations of a single function, e.\,g., $q=16$ for the elasticity 
system) on a quadrilateral grid, the discretized operator can be written 
as a block operator,
\[
	\mathcal{L}_{\Delta x} = \begin{bmatrix}
		L_{\Delta x}^{1,1} & \cdots & L_{\Delta x}^{1,q}\\
		\vdots & & \vdots\\
		L_{\Delta x}^{q,1} & \cdots & L_{\Delta x}^{q,q}
	\end{bmatrix}
\]
with scalar differential operators, $L_{\Delta x}^{i,j}$, $i, j = 1,\ldots, q$.
The Fourier symbol, $\widetilde{\mathcal{L}}_{\Delta x}(\boldsymbol{\theta})$, of $\mathcal{L}_{\Delta x}$ is computed from the Fourier symbols of each block,
\[
	\widetilde{\mathcal{L}}_{\Delta x}(\boldsymbol{\theta}) = \begin{bmatrix}
		\widetilde{L}_{\Delta x}^{1,1}(\boldsymbol{\theta}) & \cdots & \widetilde{L}_{\Delta x}^{1,q}(\boldsymbol{\theta})\\
		\vdots & & \vdots\\
		\widetilde{L}_{\Delta x}^{q,1}(\boldsymbol{\theta}) & \cdots & \widetilde{L}_{\Delta x}^{q,q}(\boldsymbol{\theta})
	\end{bmatrix},
        \]
noting that $L_{\Delta x}^{i,j}$ is a map from a function associated with
degree of freedom $j$ to that associated with degree of freedom $i$,
which may be defined on different types of meshpoints.  Formally, we
think of $\widetilde{\mathcal{L}}_{\Delta x}(\theta)$ as describing the action
of $\mathcal{L}_{\Delta x}$ on the $q$-dimensional space given by linear
combinations of the Fourier modes associated with each type of degree
of freedom in the system.  Written as a $q\times q$ matrix,
$\widetilde{\mathcal{L}}_{\Delta x}(\theta)$ maps the coefficients of a
vector-valued function, $\vec{u}_{\Delta x}$, in this basis to the coefficients
of the function described by $\mathcal{L}_{\Delta x}\vec{u}_{\Delta x}$.

For the linear elasticity problem, the time-stepping operator, $\Phi$, is defined in Equation \eqref{eq:Phi_form},
\[
	\Phi = \begin{bmatrix}
\rho P \left(\rho M + \Delta t^2\mu K\right)^{-1}M & -\Delta t\mu
P\left(\rho M + \Delta t^2\mu K\right)^{-1}K \\
\rho\Delta tP \left(\rho M + \Delta t^2\mu K\right)^{-1}M & -(\Delta t)^2\mu
P\left(\rho M + \Delta t^2\mu K\right)^{-1}K+I
\end{bmatrix},
\]
with projection operator, $P =I-(\rho M + \Delta t^2 \mu K)^{-1}BS^{-1}B^T$, and $S = B^T(\rho M+\Delta t^2\mu K)^{-1}B$.
Noting that both the velocity and displacement degrees of freedom are
two-dimensional vector fields of $Q2$ degrees of freedom, we have four
scalar functions in the system, whose basis is naturally partitioned
into four types, with nodal, edge-based, and cell-centered degrees of
freedom.  Thus, the Fourier symbol, $\widetilde{\Phi}$, of $\Phi$ is a
block matrix of size $16\times 16$, computed from the Fourier symbols
of its component parts,
\[
	\widetilde{\Phi} = \begin{bmatrix}
\rho\widetilde{P} \left(\rho\widetilde{M} + \Delta t^2\mu\widetilde{K}\right)^{-1}\widetilde{M} & -\Delta t\mu
\widetilde{P}\left(\rho\widetilde{M} + \Delta t^2\mu\widetilde{K}\right)^{-1}\widetilde{K} \\
\rho\Delta t\widetilde{P} \left(\rho\widetilde{M} + \Delta t^2\mu\widetilde{K}\right)^{-1}\widetilde{M} & -(\Delta t)^2\mu
\widetilde{P}\left(\rho\widetilde{M} + \Delta t^2\mu\widetilde{K}\right)^{-1}\widetilde{K}+I
\end{bmatrix},
\]
with $8\times 8$ symbols $\widetilde{P} =I-(\rho\widetilde{M} + \Delta t^2 \mu\widetilde{K})^{-1}\widetilde{B}\widetilde{S}^{-1}\widetilde{B}^T$, $\widetilde{S} = \widetilde{B}^T(\rho\widetilde{M}+\Delta t^2\mu\widetilde{K})^{-1}\widetilde{B}$,
\[
	\widetilde{M} = \begin{bmatrix}
		\widetilde{M}_x\\
		& \widetilde{M}_y
	\end{bmatrix},\quad
	\widetilde{K} = \begin{bmatrix}
		\widetilde{K}_x\\
		& \widetilde{K}_y
	\end{bmatrix}\quad\text{and}\quad 
	\widetilde{B} = \begin{bmatrix}
		\widetilde{B}_x \\
		\widetilde{B}_y
	\end{bmatrix}.
        \]
The Fourier symbols of $M$, $K$, and $B$ are derived from standard
calculations\cite{YHe_SPMacLachlan_2018a,YHe_SPMacLachlan_2018b}, and
given for reference in Appendix \ref{sec:appendix:spatialLFA}.    
    

\subsubsection{LFA in time}
We consider the infinite uniform fine and coarse temporal grids,
\begin{equation}\label{eq:lfa:time:grids}
	G_t = \{t_l := l\Delta t ~:~ l\in\mathbb{N}_0\} \quad\text{and}\quad G_T = \{T_L := Lm\Delta t ~:~ L\in\mathbb{N}_0, ~m\in\mathbb{N}\},
\end{equation}	
where the grid $G_T$ is derived from $G_t$ by multiplying the mesh size, $\Delta t$, by a positive coarsening factor, $m$. The fundamental quantities in LFA are the Fourier modes, given by the grid-functions,
\[
	\varphi(\omega,t) = \e^{\imath\omega\cdot t/\Delta t}\quad\text{for } \omega\in\left(-\frac{\pi}{m},2\pi-\frac{\pi}{m}\right], ~t\in G_t,
\]
with frequency, $\omega$, taken to vary continuously in the interval
$\left(-\frac{\pi}{m},2\pi-\frac{\pi}{m}\right]$, although any
  interval of length $2\pi$ could be used instead. Fourier modes on
  the coarse grid are defined analogously on the interval $(-\pi,\pi]$
    as explained below. Considering a coarsening factor of $m$, a
    constant-stencil restriction operator maps $m$ fine-grid
    functions, the Fourier harmonics, into one coarse-grid
    function. More precisely, these $m$ functions consist of the mode
    associated with some base index
    $\omega^{(0)}\in\left(-\frac{\pi}{m},\frac{\pi}{m}\right]$ and
      those associated with the frequencies
\[
	\omega^{(p)} = \omega^{(0)} + \frac{2\pi p}{m}, \quad p = 1, \ldots, m-1.
\]
The Fourier harmonics associated with base frequency, $\omega^{(0)}$, define subspaces,
\begin{equation}
	\mathcal{E}_t^{\omega^{(0)}} := \Span_{0\leq p\leq m-1} \left\{\e^{\imath\omega^{(p)}\cdot t_l/\Delta t}\right\}, \quad t_l\in G_t,
\end{equation}
that are left invariant by the iteration operator. As a consequence, using the matrix of Fourier modes, ordered by Fourier harmonics, we can block-diagonalize the infinite grid operators with blocks corresponding to the spaces of harmonic modes. Each block describes the action on the coefficients $\{\alpha_p\}_{p=0,\ldots,m-1}$ of the expansion,
\[
	e_l = \sum_{p=0}^{m-1} \alpha_p\e^{\imath\omega^{(p)}\cdot t_l/\Delta t},
\]
of a function $e_l\in \mathcal{E}_t^{\omega^{(0)}}$.

Instead of analyzing the error propagation on this basis for the space of harmonics, $\mathcal{E}_t^{\omega^{(0)}}$, we consider the transformed basis
\begin{equation}\label{eq:transformed:basis}
	\widehat{\mathcal{E}}_t^{\omega^{(0)}} := \Span_{0\leq r\leq m-1}\left\{\e^{\imath\omega^{(0)}\cdot t_{Lm+r}/\Delta t}\widehat{E}_{r}\right\}, \quad L\in\mathbb{N},
\end{equation}
where $\widehat{E}_{r}$ denotes the vector with entries equal to one
at time points $t_{Lm+r}$, for $L\in\mathbb{N}$, and zero
otherwise. This transformed basis is motivated by the following:
Consider a function, $e_{l}\in \mathcal{E}_t^{\omega^{(0)}}$, with $l
= Lm+r$ for $L\in\mathbb{N}$ and $r\in\{0,1,\ldots,m-1\}$. Then,
\[
	e_{Lm+r} = \sum_{p=0}^{m-1} \alpha_p\e^{\imath\omega^{(p)}\cdot t_{Lm+r}/\Delta t} = \sum_{p=0}^{m-1}\alpha_p\e^{\imath(\omega^{(0)}+\frac{2\pi p}{m})\cdot t_{Lm+r}/\Delta t} = \sum_{p=0}^{m-1}\alpha_p\e^{\imath\omega^{(0)}\cdot t_{Lm+r}/\Delta t}\e^{\imath\frac{2\pi pr}{m}} = \underbrace{\left[\sum_{p=0}^{m-1}\alpha_p\e^{\imath\frac{2\pi pr}{m}}\right]}_{=:\widehat{\alpha}_r}\e^{\imath\omega^{(0)}\cdot t_{Lm+r}/\Delta t}.
\]
Thus, any function in $\mathcal{E}_t^{\omega^{(0)}}$ can be expressed
in terms of the basis defining
$\widehat{\mathcal{E}}_t^{\omega^{(0)}}$, with coefficients
$\{\widehat{\alpha}_r\}_{r=0,\ldots,m-1}$ that depend only on the
``offset'' of fine-grid point $l=Lm+r$ from the coarse-grid indices
where $r=0$.  Moreover, implicit in this expression is an associated
coarse-grid frequency of $m\omega^{(0)}$, so that
$\e^{\imath\omega^{(0)}\cdot t_{Lm+r}/\Delta t} =
\e^{\imath(m\omega^{(0)})\cdot t_{Lm+r}/\Delta T}$.  In many senses,
this is a more natural basis for the space of Fourier harmonics, since
it directly associates a repeating pattern of coefficients in the basis of
$\widehat{\mathcal{E}}_t^{\omega^{(0)}}$ with the temporal mesh, 
as illustrated in Figure \ref{fig:repeating:pattern:m}.
	
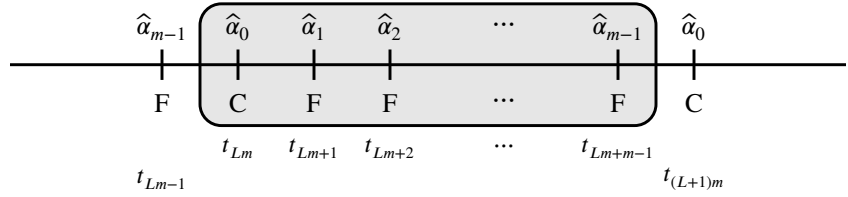
\begin{figure}[h!t]
	\centering
		\begin{tikzpicture}[line width=1.1pt]
			\draw[rounded corners=8pt,fill=gray!20] (-.5,-.8) rectangle (5.5,.8);
			\draw (-3,0) -- (8,0);
			\foreach \i in {-1,0,1,2,5,6}
				\draw (\i,-.2) -- (\i,.2);
			\foreach \i in {0,1,2}
				\draw (\i,.5) node {$\widehat{\alpha}_\i$};
			\draw (3.5,.5) node {$\cdots$};
			\draw (5,.5) node {$\widehat{\alpha}_{m-1}$};
				
			\draw (-1,.5) node {$\widehat{\alpha}_{m-1}$};
			\draw (6,.5) node {$\widehat{\alpha}_0$};
				
			\draw (0,-1.1) node {\scriptsize $t_{Lm}$}; \draw (1,-1.1) node {\scriptsize $t_{Lm+1}$};
			\draw (2,-1.1) node {\scriptsize $t_{Lm+2}$}; \draw (5,-1.1) node {\scriptsize $t_{Lm+m-1}$};
			\draw (3.5,-1.1) node {\scriptsize $\cdots$};
				
			\draw (6,-1.5) node {\scriptsize $t_{(L+1)m}$}; \draw (-1,-1.5) node {\scriptsize $t_{Lm-1}$};
				
			\foreach \i in {0,6}
				\draw (\i,-.5) node {C};
			\foreach \i in {-1,1,2,5}
				\draw (\i,-.5) node {F};
			\draw (3.5,-.5) node {$\cdots$};
		\end{tikzpicture}
	\caption{Visualization of the repeating pattern of $m$ data values on the time grid.}
	\label{fig:repeating:pattern:m}
\end{figure}

For the three-level analysis, we consider a hierarchy of three time grids, with grid spacings $\Delta t$, $m\Delta t$, and $m_2m\Delta t$, respectively. Correspondingly, we consider base frequencies, $\omega^{(0)}\in\left(-\frac{\pi}{m_2m},\frac{\pi}{m_2m}\right]$, and time-series coefficients $\{\widehat{\alpha}_r\}_{r=0,\ldots,m_2m-1}$, $\{\widehat{\alpha}_r\}_{r=0,m,\ldots,(m_2-1)m}$, and $\widehat{\alpha}_0$, respectively.
Figure \ref{fig:time:series:coeff:3grid} shows the time-series coefficients on the time grids for the case $m=4$ and $m_2=2$.

\begin{figure}[h!t]
	\begin{center}
		\begin{tikzpicture}[line width=1.1pt,scale=.75]
			\draw[rounded corners=8pt,fill=gray!20] (-.5,-1) rectangle (7.5,1);
			\draw (-5,0) -- (13,0);
			\foreach \i in {-4,...,12}
				\draw (\i,-.2) -- (\i,.2);
			\foreach \i in {0,1,...,7}
				\draw (\i,.55) node {$\widehat{\alpha}_\i$};
			\draw (-1,.55) node {$\widehat{\alpha}_7$};
			\draw (8,.55) node {$\widehat{\alpha}_0$};
			\foreach \i in {-4,0,4,8,12}
				\draw (\i,-.55) node {C};
			\foreach \i in {-3,-2,-1,1,2,3,5,6,7,9,10,11}
				\draw (\i,-.55) node {F};
			\draw[line width=1.5pt,,decorate,decoration={brace,amplitude=3pt},yshift=5pt] (11,-1.2) -- (10,-1.2) node[below=3pt,midway] {$\Delta t$};	
				
			\draw[rounded corners=8pt,fill=gray!20] (-.5,-4) rectangle (6,-2);
			\draw (-5,-3) -- (13,-3);
			\foreach \i in {-4,0,...,12}
				\draw (\i,-3.2) -- (\i,-2.8);	
			\draw (0,-2.45) node {$\widehat{\alpha}_0$}; \draw (4,-2.45) node {$\widehat{\alpha}_4$};
			\draw (-4,-2.45) node {$\widehat{\alpha}_4$};
			\draw (8,-2.45) node {$\widehat{\alpha}_0$};
			\foreach \i in {0,8}
				\draw (\i,-3.55) node {C};
			\foreach \i in {-4,4,12}
				\draw (\i,-3.55) node {F};
			\draw[line width=1.5pt,,decorate,decoration={brace,amplitude=3pt},yshift=5pt] (12,-4.2) -- (8,-4.2) node[below=3pt,midway] {$m\Delta t$};		
				
			\draw[rounded corners=8pt,fill=gray!20] (-.5,-6.6) rectangle (4,-5);
			\draw (-5,-6) -- (13,-6);
			\foreach \i in {0,8}
				\draw (\i,-6.2) -- (\i,-5.8);	
			\draw (0,-5.45) node {$\widehat{\alpha}_0$}; \draw (8,-5.45) node {$\widehat{\alpha}_0$};	
			\draw[line width=1.5pt,,decorate,decoration={brace,amplitude=3pt},yshift=5pt] (8,-7) -- (0,-7) node[below=3pt,midway] {$m_2m\Delta t$};
		\end{tikzpicture}
	\end{center}
	\caption{Visualization of the time-series coefficients on the three-level time-grid hierarchy with coarsening factors $m=4$ and $m_2=2$.}
	\label{fig:time:series:coeff:3grid}
\end{figure}
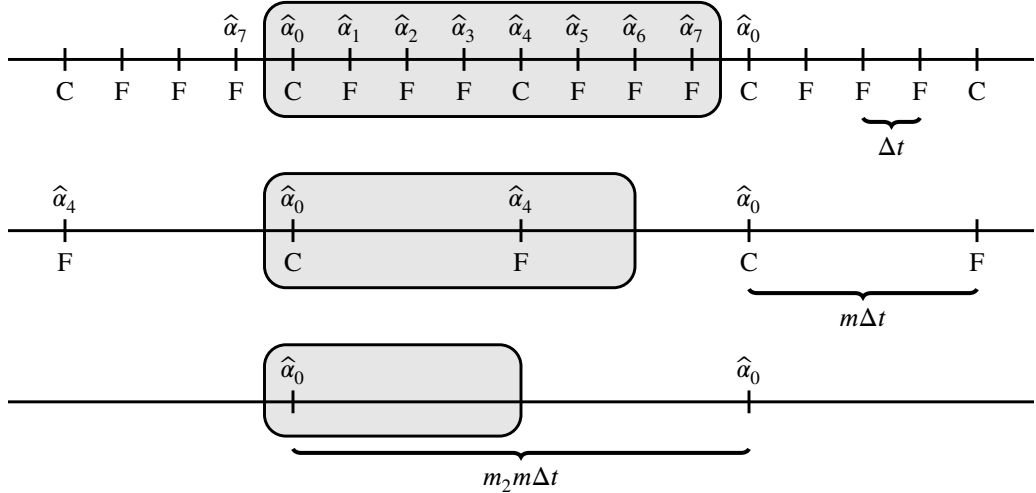


\subsubsection{Space-time LFA}
For simplicity, we only describe two-grid space-time LFA; the
three-level analysis can be done using inductive arguments. We
consider two infinite uniform grids, $\Omega_h := G_{\Delta x}\otimes
G_t$, and $\Omega_H := G_{\Delta x}\otimes G_T$ with $G_{\Delta x},
G_t$, and $G_T$ defined in Equations \eqref{eq:lfa:space:grid} and
\eqref{eq:lfa:time:grids}. The subscript $h$ represents the pair
$({\Delta x},\Delta t)$, where ${\Delta x}$ denotes the mesh size in
both spatial dimensions, and $\Delta t$ is the time-step size. The
grid $\Omega_H$ is derived from $\Omega_h$ by multiplying the mesh
size in the time dimension only, i.\,e., $H$ represents the pair
$(\Delta x, m\Delta t)$. On $\Omega_h$, we consider an infinite-grid
(multilevel) Toeplitz operator, $A_h$, corresponding to the
discretization of a time-dependent PDE, or a system of PDEs, in 2D
space. Since the operator, $A_h$, is (multilevel) Toeplitz, it can be block-diagonalized by a set of continuous space-time Fourier modes,
\[
	\psi(\boldsymbol{\theta},\omega,\mathbf{x},t) = \e^{\imath\boldsymbol{\theta}\cdot\mathbf{x}/\Delta x}\e^{\imath\omega\cdot t/\Delta t}\quad\text{for }\boldsymbol{\theta}\in(-\pi,\pi]^2, \omega\in(-\pi,\pi], (\mathbf{x},t)\in \Omega_h.
\]
Considering semicoarsening in time, we analyze the error propagation on the space $\mathcal{E}^{(\theta,\omega^{(0)})}:=\Span\{\e^{\imath\boldsymbol{\theta}\cdot\mathbf{x}_{j,k}/\Delta x}\}\otimes\widehat{\mathcal{E}}_t^{\omega^{(0)}}$, for spatial frequencies $\boldsymbol{\theta}\in(-\pi,\pi]^2$ and temporal base frequencies $\omega^{(0)}\in\left(-\frac{\pi}{m},\frac{\pi}{m}\right]$, i.\,e., we compute the action of the operators in the two-grid cycle on the coefficients
$\{\widehat{\alpha}_r\}_{r=0,\ldots,m-1}$ of the expansion
\[
	e_{j,k,l} = \widehat{\alpha}_{l\bmod m}\e^{\imath\omega^{(0)}\cdot t_l/\Delta t}\e^{\imath\boldsymbol{\theta}\cdot\mathbf{x}_{j,k}/\Delta x}
\]
of a function $e_{j,k,l}\in\mathcal{E}^{(\theta,\omega^{(0)})}$.
\begin{definition}
	Let $L_h$ be an infinite-grid (multilevel) Toeplitz operator, and for given $\boldsymbol{\theta}\in(-\pi,\pi]^2, \omega^{(0)}\in\left(-\frac{\pi}{m},\frac{\pi}{m}\right]$, let 
	\[
		e_{j,k,l}(\boldsymbol{\theta},\omega^{(0)}) =
                \widehat{\alpha}_{l \bmod m}\e^{\imath\omega^{(0)}\cdot t_l/\Delta t}\e^{\imath\boldsymbol{\theta}\cdot\mathbf{x}_{j,k}/\Delta x}\in\mathcal{E}^{(\theta,\omega^{(0)})}
	\]
	be a function with uniquely defined coefficients $\{\widehat{\alpha}_r\}_{r=0,\ldots,m-1}$. The transformation of the coefficients under the operator $L_h$,
	\[
		\begin{bmatrix}
			\widehat{\alpha}_0\\
			\widehat{\alpha}_1\\
			\vdots\\
			\widehat{\alpha}_{m-1}
		\end{bmatrix} \quad\leftarrow\quad \widehat{L}_h(\boldsymbol{\theta},\omega^{(0)})\begin{bmatrix}
			\widehat{\alpha}_0\\
			\widehat{\alpha}_1\\
			\vdots\\
			\widehat{\alpha}_{m-1}
		\end{bmatrix},
	\]
	defined by
	\[
		\left(L_h e(\boldsymbol{\theta},\omega^{(0)})\right)_{j,k,l} = \widehat{L}_h(\boldsymbol{\theta},\omega^{(0)}) e_{j,k,l}(\boldsymbol{\theta},\omega^{(0)}),
	\]
	is called the \emph{space-time Fourier symbol}, $\widehat{L}_h = \widehat{L}_h(\boldsymbol{\theta},\omega^{(0)})$, of $L_h$.
\end{definition}

A complete convergence analysis of the convergence properties of a Parareal or MGRIT algorithm arises from computing the symbols of each individual operator of the corresponding iteration matrix, $E$, to obtain the symbol, $\widehat{E}(\boldsymbol{\theta},\omega^{(0)})$, of the iteration matrix as a whole. The asymptotic convergence behavior can then be predicted by calculating the asymptotic convergence factor
\[
	\rho_\text{LFA}(E) = \sup\left\{\rho(\widehat{E}(\boldsymbol{\theta},\omega^{(0)})) ~:~ \boldsymbol{\theta}\in(-\pi,\pi]^2, \omega^{(0)}\in\left(-\frac{\pi}{m},\frac{\pi}{m}\right]\right\}.
\]
Note that, in practice, LFA has to be taken from its infinite-grid setting, and we maximize, instead, over a finite set of frequency tuples, $(\boldsymbol{\theta},\omega^{(0)})$. 

Similarly, we introduce the error reduction factor
\[
	\sigma_\text{LFA}(E) = \sup\left\{\|\widehat{E}(\boldsymbol{\theta},\omega^{(0)})\|_2 ~:~ \boldsymbol{\theta}\in(-\pi,\pi]^2, \omega^{(0)}\in\left(-\frac{\pi}{m},\frac{\pi}{m}\right]\right\},
\]
again maximizing over finite sets of values of $\boldsymbol{\theta}$ and $\omega^{(0)}$ to get a computable prediction. Since for both the Parareal and MGRIT approaches, iteration operators have only a single eigenvalue of zero, only the error reduction factor provides insight into the convergence behavior of a Parareal or MGRIT algorithm. Furthermore, due to the non-normality of the iteration operators, it is crucial to not only consider the iteration matrix itself, but also powers of the iteration matrix to examine the short-term convergence behavior. More precisely, for any initial error, $\boldsymbol{e}_0$, $\|E^k\boldsymbol{e}_0\|\leq\|E^k\|\|\boldsymbol{e}_0\|\leq\|E\|^k\|\boldsymbol{e}_0\|$, where $k$ denotes the number of iterations of the multigrid scheme. Thus, we introduce the error reduction factors
\begin{equation}\label{eq:err_red_factor_lfa}
	\sigma_\text{LFA}(E^k) = \sup\left\{\|(\widehat{E}(\boldsymbol{\theta},\omega^{(0)}))^k\|_2 ~:~ \boldsymbol{\theta}\in(-\pi,\pi]^2, \omega^{(0)}\in\left(-\frac{\pi}{m},\frac{\pi}{m}\right]\right\},\quad\text{for }k\geq 1,
\end{equation}
providing the worst-case error reduction, i.\,e., an upper bound for
the error reduction, in iteration $k$ for the time-periodic problem.
In practice, we observe that this also provides upper bounds for the
worst-case error reduction in the non-periodic case but this, of
course, is no longer a rigorous bound.


\paragraph{Space-time Fourier symbols of MGRIT with F- and FCF-relaxation}
Let $\widetilde{\Phi}_\theta$ and $\widetilde{\Phi}_{c,\theta}$ denote
the $q\times q$ ($q\geq 1$) spatial Fourier symbols of the fine- and
coarse-scale time integrators, $\Phi$ and $\Phi_c$, respectively, and
let $I$ denote an identity matrix of size $q\times q$. Then, the
space-time Fourier symbol of the fine-grid operator, $A$, is given by
the $qm\times qm$ matrix
\[
	\widehat{A} = \begin{bmatrix}
			I & 0  & 0 & 0 & \cdots & 0 & -\widetilde{\Phi}_\theta\e^{-\imath\omega^{(0)}}\\[.2\normalbaselineskip]
			-\widetilde{\Phi}_\theta\e^{-\imath\omega^{(0)}} & I & 0 & 0 & \cdots & 0 & 0\\[.2\normalbaselineskip]
			0 & -\widetilde{\Phi}_\theta\e^{-\imath\omega^{(0)}} & I & 0 & \cdots & 0 & 0\\[.2\normalbaselineskip]
			& & \ddots & \ddots\\[.2\normalbaselineskip]
			& & & & & -\widetilde{\Phi}_\theta\e^{-\imath\omega^{(0)}} & I
		\end{bmatrix}.
\]
The set of modes $\psi(\boldsymbol{\theta},m\omega^{(0)})$ for
$\omega^{(0)}\in\left(-\frac{\pi}{m},\frac{\pi}{m}\right]$ is a
  complete set of space-time Fourier modes on the coarse grid,
  $\Omega_H$. Further, any function in
  $\mathcal{E}^{(\theta,\omega^{(0)})}$ is aliased on the coarse grid
  with the function $\widehat{\alpha}_0\psi(\boldsymbol{\theta},m\omega^{(0)})$ with a fixed coefficient $\widehat{\alpha}_0$. As a consequence, the space-time Fourier symbol of the coarse-grid operator, $A_c$, is a block matrix with one block of size $q\times q$, given by
\[
	\widehat{A}_c = I-\widetilde{\Phi}_{c,\theta}\e^{-\imath m\omega^{(0)}}.
\]
The space-time Fourier symbols of the interpolation and restriction operators, $P_\Phi$, and $R_I$, are block matrices with $m\times 1$ blocks or $1\times m$ blocks of size $q\times q$,
\[
	\widehat{P}_\Phi = \begin{bmatrix}
			I\\[.2\normalbaselineskip]
			\widetilde{\Phi}_\theta\e^{-\imath\omega^{(0)}}\\[.2\normalbaselineskip]
			\widetilde{\Phi}^2_\theta\e^{-\imath2\omega^{(0)}}\\[.2\normalbaselineskip]
			\vdots\\[.2\normalbaselineskip]
			\widetilde{\Phi}^{m-1}_\theta\e^{-\imath(m-1)\omega^{(0)}}
		\end{bmatrix}\quad\text{and}\quad \widehat{R}_I = \begin{bmatrix}
			I & 0 & \cdots & 0
		\end{bmatrix}.
\]
Since the F- and FCF-relaxation operators, $S^F$ and $S^{FCF}$, are equal to $P_\Phi R_I$ and $P_\Phi(I-A_S)R_I$, respectively, the space-time Fourier symbols of F- and FCF-relaxation are defined by
\[
	\widehat{S}^F = \widehat{P}_\Phi\widehat{R}_I, \quad \widehat{S}^{FCF} = \widehat{P}_\Phi(I-\widehat{A}_S)\widehat{R}_I, \quad\text{with } \widehat{A}_S = I - \widetilde{\Phi}^m_\theta\e^{-\imath m\omega^{(0)}}.
\]
This completes the definition of the space-time Fourier symbols of the
operators for F- and FCF-relaxation and, by using the expressions \eqref{eq:Parareal:it:matrix} and \eqref{eq:two:level:mgrit}, this defines the space-time Fourier symbols of the two-level methods as a whole.

A similar approach extends to the three-grid case, however, the grid hierarchy affects the block size of the space-time Fourier symbols. Considering temporal semicoarsening by factors of $m$ and $m_2$, respectively, to obtain the first and second coarse grids, the space-time Fourier symbol of the fine-grid operator, $A$, is the same as in the two-level case, but with $m_2m\times m_2m$ blocks of size $q\times q$ instead of $m\times m$ blocks of size $q\times q$. Denoting the spatial Fourier symbols of the time integrators, $\Phi_c$ and $\Phi_{cc}$ on the first and second coarse grid by $\widetilde{\Phi}_{c,\theta}$ and $\widetilde{\Phi}_{cc,\theta}$, respectively, the space-time Fourier symbol of the coarse-grid operator, $A_c$, on the first coarse grid is a block matrix with $m_2\times m_2$ blocks of size $q\times q$,
\[
	\widehat{A}_c = \begin{bmatrix}
			I & 0  & 0 & 0 & \cdots & 0 & -\widetilde{\Phi}_{c,\theta}\e^{-\imath m\omega^{(0)}}\\[.2\normalbaselineskip]
			-\widetilde{\Phi}_{c,\theta}\e^{-\imath m\omega^{(0)}} & I & 0 & 0 & \cdots & 0 & 0\\[.2\normalbaselineskip]
			0 & -\widetilde{\Phi}_{c,\theta}\e^{-\imath m\omega^{(0)}} & I & 0 & \cdots & 0 & 0\\[.2\normalbaselineskip]
			& & \ddots & \ddots\\[.2\normalbaselineskip]
			& & & & & -\widetilde{\Phi}_{c,\theta}\e^{-\imath m\omega^{(0)}} & I
		\end{bmatrix},
\]
and the space-time Fourier symbol of the coarse-grid operator, $A_{cc}$, on the second coarse grid is a block matrix with one block of size $q\times q$,
\[
	\widehat{A}_{cc} = I-\widetilde{\Phi}_{cc,\theta}\e^{-\imath m_2m\omega^{(0)}}.
\]
The space-time Fourier symbols of the Schur complement coarse-grid operators on the first and second coarse grids, $A_S$ and $A_{c,S}$, are defined as those of the coarse-grid operators, $A_c$ and $A_{cc}$, with $\widetilde{\Phi}_{c,\theta}$ and $\widetilde{\Phi}_{cc,\theta}$ replaced by $\widetilde{\Phi}_\theta^m$ and $\widetilde{\Phi}_\theta^{m_2m}$.

The space-time Fourier symbols of the restriction operators, $R_I$ and $R_{c,I}$, from the fine grid to the first coarse grid and from the first coarse grid to the second coarse grid are block matrices with $m_2\times m_2m$ blocks of size $q\times q$ or $1\times m_2$ blocks of size $q\times q$, respectively,
\begin{center}
	\begin{tikzpicture}
		\node {$\setcounter{MaxMatrixCols}{20}
		\widehat{R}_I = \begin{bmatrix}
		I & 0 & \cdots & 0\\
		& & & & I & 0 & \cdots & 0\\
		& & & & & & & & \ddots\\
		& & & & & & & & & I & 0 & \cdots & 0
	\end{bmatrix}\quad\text{and}\quad \widehat{R}_{c,I} = \begin{bmatrix}
		I & 0 & \cdots & 0
	\end{bmatrix}.
		$};
		\draw[line width=1.15pt,decorate,decoration={brace,amplitude=3pt},yshift=5pt] (-2.6,.3) -- (-3.95,.3) node[below=3pt,midway] {\scriptsize$m$ blocks};
	\end{tikzpicture}
\end{center}
Similarly, the space-time Fourier symbols of the interpolation operators, $P_\Phi$ and $P_{\Phi_c}$, from the first coarse grid to the fine grid and from the second coarse grid to the first coarse grid, respectively, are block matrices with $m_2m\times m_2$ blocks of size $q\times q$ or $m_2\times 1$ blocks of size $q\times q$, respectively,
\[
	\widehat{P}_\Phi = \begin{bmatrix}
			Z^{(P_\Phi)}\\[.2\normalbaselineskip]
			& Z^{(P_\Phi)}\\[.2\normalbaselineskip]
			& & \ddots\\[.2\normalbaselineskip]
			& & & Z^{(P_\Phi)}
	\end{bmatrix}, \quad\text{with}\quad Z^{(P_\Phi)} = \begin{bmatrix}
		 	I \\[.2\normalbaselineskip]
			\widetilde{\Phi}_\theta\e^{-\imath\omega^{(0)}} \\[.2\normalbaselineskip]
			\widetilde{\Phi}_\theta^2\e^{-\imath2\omega^{(0)}} \\[.2\normalbaselineskip]
			\vdots\\[.2\normalbaselineskip]
			\widetilde{\Phi}_\theta^{m-1}\e^{-\imath(m-1)\omega^{(0)}} 
		\end{bmatrix}\quad\text{and}\quad \widehat{P}_{\Phi^c} = \begin{bmatrix}
		 	I \\[.2\normalbaselineskip]
			\widetilde{\Phi}_{c,\theta}\e^{-\imath m\omega^{(0)}} \\[.2\normalbaselineskip]
			\widetilde{\Phi}_{c,\theta}^2\e^{-\imath2m\omega^{(0)}} \\[.2\normalbaselineskip]
			\vdots\\[.2\normalbaselineskip]
			\widetilde{\Phi}_{c,\theta}^{m_2-1}\e^{-\imath(m_2-1)m\omega^{(0)}} 
		\end{bmatrix}.
\]


\subsection{Semi-algebraic mode analysis}

LFA focuses on the local character of the operators defining the
multilevel algorithms. This means that the effects of boundary
conditions, including the initial condition in the time dimension, are
ignored. Unless long time intervals are considered, LFA can fail to
produce its usual quality of predictive results for
convection-dominated or parabolic problems
\cite{ABrandt_1981a,SVandewalle_GHorton_1995,CWOosterlee_etal_1998a,SFriedhoff_etal_2012a}. Semi-algebraic
mode analysis (SAMA) \cite{SFriedhoff_SMacLachlan_2015a} is one
applicable approach of mode analysis that enables accurate predictions
of the convergence behavior for multigrid and related multilevel
methods. The analysis combines spatial LFA with algebraic computations
that account for the non-local character of the operators in time. Other applicable approaches, not considered here, include half-space mode analysis \cite{ABrandt_1981a,ABrandt_1984a,ABrandt_IYavneh_1992a,ABrandt_BDiskin_1999a,BDiskin_JLThomas_2000a} that considers convergence on a discrete half-plane instead of the full infinite lattice used in LFA, and Fourier-Laplace analysis \cite{STaasan_HZhang_1995a,STaasan_HZhang_1995b,JJanssen_SVandewalle_1996a,JJanssen_SVandewalle_1996b,JvanLent_SVandewalle_2002a}, based on using discrete Laplace transforms. 

For simplicity, we describe SAMA for the two-level MGRIT algorithm with FCF-relaxation, represented by the two-level iteration matrix, $E^{FCF}$, defined in Equation \eqref{eq:two:level:mgrit},
\[
	E^{FCF} = P_\Phi(I-A_c^{-1}A_S)(I-A_S)R_I;
\]
the analysis of other variants is done analogously by considering the
corresponding iteration matrix. Motivated by the global block-Toeplitz
space-time structure of the operators that define the iteration
matrix, the idea of SAMA is to use a Fourier ansatz in space to
block-diagonalize the spatial blocks of each individual operator. More
precisely, we block-diagonalize the blocks of each operator using the matrix, $\Psi$, of discretized spatial Fourier modes,
\[
	\psi(\boldsymbol{\theta},\mathbf{x}) = \e^{\imath\boldsymbol{\theta}\cdot\mathbf{x}/\Delta x} \quad\text{for } \boldsymbol{\theta}\in(-\pi,\pi]^2, \mathbf{x}\in G_{\Delta x},
\]
with $N_x$ Fourier frequency pairs,
$\boldsymbol{\theta}\in(-\pi,\pi]^2$, sampled on a uniform
  quadrilateral mesh given by the tensor product of an equally-spaced
  mesh over the interval of length $2\pi$ with itself, assuming $N_x$
  degrees of freedom in the spatial discretization. For example, the
  spatial blocks of a scalar coarse-grid operator, $A_c$, are
  block-diagonalized by computing $\mathcal{F}^{-1}A_c\mathcal{F}$,
  where $\mathcal{F} = I_{N_T+1}\otimes\Psi$. The resulting matrices
  (on the fine temporal grid) are then reordered from $N_t+1\times N_t+1$ blocks of size $N_x\times N_x$ to a block matrix with $N_x\times N_x$ blocks of size $N_t+1\times N_t+1$ to obtain a block diagonal structure, with each block corresponding to the evolution of one spatial Fourier mode over time. Applying SAMA to the iteration matrix, $E^{FCF}$, we obtain a block diagonal matrix,
\[
	\mathcal{P}^{-1}\mathcal{F}^{-1}E^{FCF}\mathcal{F}\mathcal{P} = \Diag(B_{\boldsymbol{\theta}}^{\left(E^{FCF}\right)})_{\boldsymbol{\theta}\in(-\pi,\pi]^2},
\]
with a discrete choice of Fourier frequencies, $\boldsymbol{\theta}$, and
\begin{equation}\label{eq:sama:two:level:mgrit}
	B_{\boldsymbol{\theta}}^{\left(E^{FCF}\right)} = B_{\boldsymbol{\theta}}^{(P_\Phi)}\left(I-\left(B_{\boldsymbol{\theta}}^{(A_c)}\right)^{-1}B_{\boldsymbol{\theta}}^{(A_S)}\right)\left(I-B_{\boldsymbol{\theta}}^{(A_S)}\right)B_{\boldsymbol{\theta}}^{(R_I)},
\end{equation}
where the diagonal blocks of the Fourier-transformed and permuted operators are denoted by $B_{\boldsymbol{\theta}}^{(\cdot)}$ marking the respective operator in the superscript and the spatial Fourier frequency pair, $\boldsymbol{\theta}$, in the subscript. Note that the grid hierarchy and size of the spatial Fourier symbol of the time integrators, $\Phi$ and $\Phi_c$, affect the block size of the transformed operators. Considering temporal semicoarsening by a factor of $m$ in the MGRIT approach and assuming that the spatial Fourier symbol of the time-integration operators, $\Phi$ and $\Phi_c$, are of size $q\times q$, the blocks $B_{\boldsymbol{\theta}}^{(A_c)}$ and $B_\theta^{(A_S)}$ are of size $(N_t/m+1)q\times (N_t/m+1)q$, and the blocks $B_{\boldsymbol{\theta}}^{(P_\Phi)}$ and $B_{\boldsymbol{\theta}}^{(R_I)}$ are of size $(N_t+1)q\times (N_t/m+1)q$ and $(N_t/m+1)q\times (N_t+1)q$, respectively. 

The short-term convergence behavior of MGRIT with iteration matrix, $E$, can then be predicted by calculating the error reduction factor (corresponding to worst-case 
error reduction in iteration $k$)
\begin{equation}\label{eq:err_red_factor_sama}
	\sigma_\text{SAMA}(E^k) = \sup\left\{\left\|\left(B_{\boldsymbol{\theta}}^{(E)}\right)^k\right\|_2 ~:~ \boldsymbol{\theta}\in(-\pi,\pi]^2\right\},\quad\text{for }k\geq 1.
\end{equation}
Similarly to LFA, in practice, we maximize over a finite set of frequencies, $\boldsymbol{\theta}$.  \revision{We note that the prediction given by SAMA is \textit{exact} in special cases, notably when $\Phi$ has (block-)circulant structure and we sample at $N_x$ evenly-spaced Fourier points for the underlying LFA symbols (so that LFA coincides with rigorous Fourier analysis).  In that setting, \eqref{eq:err_red_factor_sama} provides an exact worst-case convergence bound for MGRIT performance, corresponding to the worst-case initial space-time error, against which the quality of predictions by other approaches can be measured.  Note, however, that for any given initial value for the underlying PDE problem, the frequency content of the initial error may or may not sample the worst-case modes and, so, this prediction can still be an over-estimate of the actual MGRIT convergence factor, consistent with Bal's analysis for Parareal\cite{Bal_2005}.}

\paragraph{SAMA block matrices for MGRIT with F- and FCF-relaxation}
Consider temporal semicoarsening by factors of $m$ and $m_2$, respectively, to obtain the first and second coarse grids, and let $\widetilde{\Phi}_\theta$, $\widetilde{\Phi}_{c,\theta}$, and $\widetilde{\Phi}_{cc,\theta}$ again denote the spatial Fourier symbols of the time integrators, $\Phi$, $\Phi_c$, and $\Phi_{cc}$ on the fine, first, and second coarse grid, respectively; for a two-level hierarchy, consider the first coarse grid only. Then, the diagonal blocks of the Fourier-transformed and permuted operators, $A$, $A_c$, and $A_{cc}$ are block bidiagonal matrices with $(N_t+1)\times(N_t+1)$, $(N_t/m+1)\times(N_t/m+1)$, or $(N_t/(m_2m)+1)\times(N_t/(m_2m)+1)$ blocks, respectively, of size $q\times q$, given by
\[
	B_{\boldsymbol{\theta}}^{(A)} = \begin{bmatrix}
		I\\
		-\widetilde{\Phi}_\theta & I\\
		& \ddots & \ddots\\
		& & -\widetilde{\Phi}_\theta & I		
	\end{bmatrix},\quad B_{\boldsymbol{\theta}}^{(A_c)} = \begin{bmatrix}
		I\\
		-\widetilde{\Phi}_{c,\theta} & I\\
		& \ddots & \ddots\\
		& & -\widetilde{\Phi}_{c,\theta} & I		
	\end{bmatrix},\quad B_{\boldsymbol{\theta}}^{(A_{cc})} = \begin{bmatrix}
		I\\
		-\widetilde{\Phi}_{cc,\theta} & I\\
		& \ddots & \ddots\\
		& & -\widetilde{\Phi}_{cc,\theta} & I		
	\end{bmatrix}.
\]
The diagonal blocks of the Fourier-transformed and permuted Schur complement coarse-grid operators on the first and second coarse grids, $A_S$ and $A_{c,S}$, are defined analogously, with $-\widetilde{\Phi}_\theta^m$ and $-\widetilde{\Phi}_\theta^{m_2m}$ on the first subdiagonal.

The diagonal blocks of the Fourier-transformed and permuted restriction operators, $R_I$ and $R_{c,I}$, from the fine grid to the first coarse grid and from the first coarse grid to the second coarse grid are block matrices with $(N_t/m+1)\times(N_t+1)$ blocks of size $q\times q$ or $(N_t/(m_2m)+1)\times(N_t/m+1)$ blocks of size $q\times q$, respectively,
\begin{center}
	\begin{tikzpicture}
		\node {$\setcounter{MaxMatrixCols}{20}
		B_{\boldsymbol{\theta}}^{(R_I)} = \begin{bmatrix}
		I & 0 & \cdots & 0\\
		& & & & I & 0 & \cdots & 0\\
		& & & & & & & & \ddots\\
		& & & & & & & & & I & 0 & \cdots & 0\\
		& & & & & & & & & & & & & I
	\end{bmatrix}\quad\text{and}\quad B_{\boldsymbol{\theta}}^{(R_{c,I})}= \begin{bmatrix}
		I & 0 & \cdots & 0\\
		& & & & I & 0 & \cdots & 0\\
		& & & & & & & & \ddots\\
		& & & & & & & & & I & 0 & \cdots & 0\\
		& & & & & & & & & & & & & I
	\end{bmatrix}.
		$};
		\draw[line width=1.15pt,decorate,decoration={brace,amplitude=3pt},yshift=5pt] (-4.6,.5) -- (-5.95,.5) node[below=3pt,midway] {\scriptsize$m$ blocks};
		\draw[line width=1.15pt,decorate,decoration={brace,amplitude=3pt},yshift=5pt] (3.3,.5) -- (1.95,.5) node[below=3pt,midway] {\scriptsize$m_2$ blocks};
	\end{tikzpicture}
\end{center}
Similarly, the diagonal blocks of the Fourier-transformed and permuted interpolation operators, $P_\Phi$ and $P_{\Phi_c}$, from the first coarse grid to the fine grid and from the second coarse grid to the first coarse grid, respectively, are block matrices with $(N_t+1)\times(N_t/m+1)$ blocks of size $q\times q$ or $(N_t/m+1)\times(N_t/(m_2m)+1)$ blocks of size $q\times q$, respectively,
\[
	B_{\boldsymbol{\theta}}^{(P_\Phi)} = \begin{bmatrix}
		Z^{(\widetilde{\Phi}_\theta)}\\
		& \ddots\\
		& &Z^{(\widetilde{\Phi}_\theta)}\\
		& & & I
	\end{bmatrix}\quad\text{and}\quad B_{\boldsymbol{\theta}}^{(P_{\Phi_c})} = \begin{bmatrix}
		Z^{(\widetilde{\Phi}_{c,\theta})}\\
		& \ddots\\
		& &Z^{(\widetilde{\Phi}_{c,\theta})}\\
		& & & I
	\end{bmatrix}, \quad\text{with}\quad Z^{(\widetilde{\Phi}_\theta)} = \begin{bmatrix}
		I\\
		\widetilde{\Phi}_\theta\\
		\vdots\\
		\widetilde{\Phi}_\theta^{m-1}
	\end{bmatrix}\quad\text{and}\quad Z^{(\widetilde{\Phi}_{c,\theta})} = \begin{bmatrix}
		I\\
		\widetilde{\Phi}_{c,\theta}\\
		\vdots\\
		\widetilde{\Phi}_{c,\theta}^{m_2-1}
	\end{bmatrix}.
\]


\subsection{Two-level reduction analysis}

Two-level reduction analysis \cite{VADobrev_etal_2017, BSouthworth_2018a} provides convergence bounds for two-level Parareal and MGRIT algorithms applied to linear time-stepping problems with fine- and coarse-scale time-stepping operators that can be diagonalized by the same set of eigenvectors. In particular, the analysis allows predictions of the convergence behavior for time-stepping problems arising from a method-of-lines approximation of a time-dependent PDE when considering a fixed spatial discretization in the time-grid hierarchy.
In Section \ref{ssub:ra:scalar:pdes}, we review the ideas behind the two-level reduction analysis for time-stepping problems arising from scalar PDEs and discuss how it can be combined with LFA in space. Section \ref{ssub:ra:pde:systems} is devoted to extending the analysis to time-stepping problems arising from systems of PDEs using finite-element discretizations for the discretization in space.

\subsubsection{Two-level reduction analysis for scalar PDEs}\label{ssub:ra:scalar:pdes}
We consider solving a time-stepping problem
\eqref{eq:fine:linear:system}, arising from a scalar PDE, by two-level
MGRIT. As above, let $\Phi$ and $\Phi_c$ denote the two time-stepping
operators on the fine time grid with $N_t$ time intervals, and on the
coarse time grid with $N_T=N_t/m$ time intervals,
respectively. Furthermore, assume that 
$N_x$ degrees of freedom are used for the discretization in space, so that $\Phi$ and $\Phi_c$ are matrices of size $N_x\times N_x$. Motivated by the reduction aspect of MGRIT, in contrast to analyzing full iteration matrices as in the SAMA approach, we consider the iteration matrices only on the coarse grid,
\begin{equation}\label{eq:cg:Parareal:mgrit}
	E_\Delta^F = I - A_c^{-1}A_S \quad\text{and}\quad E_\Delta^{FCF} = (I-A_c^{-1}A_S)(I-A_S),
\end{equation}
where $A_c$ and $A_S$ denote the coarse-grid operator and the Schur complement coarse-grid operator, respectively, introduced in Section \ref{sub:Parareal}. 

Two-level reduction analysis is based on the same analysis techniques
used in SAMA, but applied to the coarse-grid iteration matrix,
$E_\Delta$, instead of to the fine-grid iteration matrix, $E$, of the
algorithm. Furthermore, instead of using a Fourier ansatz in space,
the eigenvectors of the fine-grid time-stepping operator are used to
diagonalize the fine- and coarse-scale time integrators (under the
assumption that they are simultaneously unitarily diagonalizable) and, thus, the spatial blocks of the coarse-grid iteration matrix, $E_\Delta$. More precisely, using the unitary transformation, $X$, that diagonalizes the fine- and coarse-scale time integrators, $\Phi$ and $\Phi_c$, the blocks of the coarse-grid iteration matrix, $E_\Delta$ are diagonalized by computing $\mathcal{X}^{-1}E_\Delta\mathcal{X}$, where $\mathcal{X} = I_{N_T+1}\otimes X$. Similarly to SAMA, the resulting matrix is then permuted to obtain a block diagonal structure, with each block corresponding to the evolution of one eigenvector over time. Denoting the eigenvectors of the fine- and coarse-scale time integrators, $\Phi$ and $\Phi_c$, by $\left\{\mathbf{x}_n\right\}$, and the corresponding eigenvalues by $\left\{\lambda_n\right\}$ and $\left\{\mu_n\right\}$, respectively, for $n=1,\ldots,N_x$, we obtain
\[
	\mathcal{P}^{-1}\mathcal{X}^{-1}E_\Delta^F\mathcal{X}\mathcal{P} = \Diag(E_{\Delta,n}^F)_{n=1,\ldots,N_x}\quad\text{and}\quad\mathcal{P}^{-1}\mathcal{X}^{-1}E_\Delta^{FCF}\mathcal{X}\mathcal{P} = \Diag(E_{\Delta,n}^{FCF})_{n=1,\ldots,N_x},
\]
with
\begin{equation}
	E_{\Delta,n}^F = (\lambda_n^m - \mu_n)\begin{bmatrix}
		0\\
		1 & 0\\
		\mu_n & 1 & 0\\
		\vdots & \ddots & \ddots & \ddots\\
		\mu_n^{N_T-1} & \cdots & \mu_n & 1 & 0
	\end{bmatrix}\quad\text{and}\quad E_{\Delta,n}^{FCF} = (\lambda_n^m - \mu_n)\lambda_n^m\begin{bmatrix}
		0\\
		0 & 0\\
		1 & 0 & 0\\
		\mu_n & 1 & 0 & 0\\
		\vdots & \ddots & \ddots & \ddots & \ddots\\
		\mu_n^{N_T-2} & \cdots & \mu_n & 1 & 0 & 0
	\end{bmatrix}.
\end{equation}
Using the standard norm inequality
$\|E_\Delta\|_2\leq\sqrt{\|E_\Delta\|_1\|E_\Delta\|_\infty}$ for an
operator $E_\Delta$, the convergence behavior of a two-level method is
then predicted by calculating the bounds,
\[
\sigma_\text{RA}(E_\Delta^F) = \max_{n=1,\ldots,N_x}\sqrt{\|E_{\Delta,n}^F\|_1\|E_{\Delta,n}^F\|_\infty}\quad\text{and}\quad \sigma_\text{RA}(E_\Delta^{FCF}) = \max_{n=1,\ldots,N_x}\sqrt{\|E_{\Delta,n}^{FCF}\|_1\|E_{\Delta,n}^{FCF}\|_\infty}.,
\]
that have since been shown to be accurate to order
$\mathcal{O}(1/N_T)$\cite{BSouthworth_2018a}.
Assuming that $|\mu_n| \neq 1$ for all $n=1,\ldots,N_x$, we obtain\cite{VADobrev_etal_2017}
\[
	\|E_{\Delta,n}^F\|_1 = \|E_{\Delta,n}^F\|_\infty = |\lambda_n^m-\mu_n|\sum_{j=0}^{N_T-1}|\mu_n|^j = \left|\lambda_n^m - \mu_n\right| \frac{\left(1-|\mu_n|^{N_T}\right)}{(1-|\mu_n|)},
\]
and
\[
	\|E_{\Delta,n}^{FCF}\|_1 = \|E_{\Delta,n}^{FCF}\|_\infty = \left|\lambda_n^m - \mu_n\right| \frac{\left(1-|\mu_n|^{N_T-1}\right)}{(1-|\mu_n|)}|\lambda_n|^m,
\]
and, thus,
\begin{equation}\label{eq:err_red_factor_ra_scalar_case}
	\sigma_\text{RA}(E_\Delta^F) = \max_{n=1,\ldots,N_x}\left\{\left|\lambda_n^m - \mu_n\right| \frac{\left(1-|\mu_n|^{N_T}\right)}{(1-|\mu_n|)}\right\}\quad\text{and}\quad \sigma_\text{RA}(E_\Delta^{FCF}) = \max_{n=1,\ldots,N_x}\left\{\left|\lambda_n^m - \mu_n\right| \frac{\left(1-|\mu_n|^{N_T-1}\right)}{(1-|\mu_n|)}|\lambda_n|^m\right\}.
\end{equation}

\begin{remark}
	Note that the two-level reduction analysis requires solving an
        eigenvalue problem of the size of the degrees of freedom of
        the spatial discretization to compute the eigenvalues of the
        two time-integrators, which are assumed to be simultaneous
        unitarily diagonalizable. To avoid the high computational cost
        of this eigensolve for high spatial resolutions (and
        complications in the non-normal case), LFA in space can be applied, providing predictions of the spectra of the time-stepping operators. Thus, the eigenvalues $\left\{\lambda_n\right\}$ and $\left\{\mu_n\right\}$ of $\Phi$ and $\Phi_c$ can be replaced by the spatial Fourier symbols, $\left\{\widetilde{\Phi}_{\boldsymbol{\theta}}\right\}$ and $\left\{\widetilde{\Phi}_{c,\boldsymbol{\theta}}\right\}$, respectively, choosing $N_x$ frequency values for $\boldsymbol{\theta}$.
\end{remark}

\begin{remark}\label{rem:RApowers}
	Considering powers of the matrices $E_{\Delta,n}^F$ and $E_{\Delta,n}^{FCF}$, we can analogously derive bounds of the $L_2$-norm of powers of the coarse-grid iteration matrices \revision{and, thus, capture the exactness property of MGRIT.} For $k\geq 2$, we obtain
	\begin{center}
	\begin{tikzpicture}
		\node {$\left(E_{\Delta,n}^F\right)^k = (\lambda_n^m-\mu_n)^k\begin{bmatrix}
		0\\
		\vdots\\
		0\\
		1 & 0 & 0 & 0\\
		{k\choose 1}\mu_n & 1 & 0 & 0 & 0\\[.3\normalbaselineskip]
		{k+1\choose 2}\mu_n^2 & {k\choose 1}\mu_n & 1 & 0 & 0 & 0\\[.3\normalbaselineskip]
		{k+2\choose 3}\mu_n^3 & {k+1\choose 2}\mu_n^2 & \hspace*{.5em}{k\choose 1}\mu_n & 1 & 0 & 0 & 0\\[.5\normalbaselineskip]
		\vdots & & \ddots & \ddots & \ddots & \ddots &  & \ddots\\[.5\normalbaselineskip]
		{N_T-1\choose N_T-k}\mu_n^{N_T-k} & \cdots & & {k+1\choose 2}\mu_n^2 & {k\choose 1}\mu_n & 1 & 0 & \cdots & 0
		\end{bmatrix}$};
		\draw[line width=1pt,decorate,decoration={brace,amplitude=5pt}] (-2.7,1.2) -- (-2.7,2.4) node[left=5pt,midway] {\scriptsize$k$ rows};
		\draw[line width=1pt,decorate,decoration={brace,amplitude=3pt}] (5.65,-2.45) -- (4.65,-2.45) node[below=5pt,midway] {\scriptsize$k$ columns};
	\end{tikzpicture}
\end{center}
and
\begin{center}
	\begin{tikzpicture}
		\node {$\left(E_{\Delta,n}^{FCF}\right)^k = (\lambda_n^m-\mu_n)^k(\lambda_n^m)^k\begin{bmatrix}
		0\\
		\vdots\\
		0\\
		1 & 0 & 0 & 0\\
		{k\choose 1}\mu_n & 1 & 0 & 0 & 0\\[.3\normalbaselineskip]
		{k+1\choose 2}\mu_n^2 & {k\choose 1}\mu_n & 1 & 0 & 0 & 0\\[.3\normalbaselineskip]
		{k+2\choose 3}\mu_n^3 & {k+1\choose 2}\mu_n^2 & {k\choose 1}\mu_n & 1 & 0 & 0 & 0\\[.5\normalbaselineskip]
		\vdots & & \ddots & \ddots & \ddots & \ddots & & \ddots\\[.5\normalbaselineskip]
		{N_T-1-k\choose N_T-2k}\mu_n^{N_T-2k} & \cdots & & {k+1\choose 2}\mu_n^2 & {k\choose 1}\mu_n & 1 & 0 & \cdots & 0
		\end{bmatrix}$};
		\draw[line width=1pt,decorate,decoration={brace,amplitude=5pt}] (-2.35,1.15) -- (-2.35,2.5) node[left=5pt,midway] {\scriptsize$2k$ rows};
		\draw[line width=1pt,decorate,decoration={brace,amplitude=3pt}] (6.25,-2.45) -- (5.25,-2.45) node[below=5pt,midway] {\scriptsize$2k$ columns};
	\end{tikzpicture}
\end{center}
\end{remark}
Thus,
\[
	\left\|\left(E_{\Delta,n}^F\right)^k\right\|_1 = \left\|\left(E_{\Delta,n}^F\right)^k\right\|_\infty = |\lambda_n^m-\mu_n|^k\left[\sum_{j=0}^{N_T-k}{j+(k-1) \choose j}|\mu_n|^j\right]
\]
and
\[
	\left\|\left(E_{\Delta,n}^{FCF}\right)^k\right\|_1 = \left\|\left(E_{\Delta,n}^{FCF}\right)^k\right\|_\infty = |\lambda_n^m-\mu_n|^k|\lambda_n^m|^k\left[\sum_{j=0}^{N_T-2k}{j+(k-1) \choose j}|\mu_n|^j\right].
\]
Using the definition of the binomial coefficient, for $k\geq 2$ we obtain
\begin{equation}\label{eq:err_red_factor_ra_F}
	\sigma_\text{RA}\left(\left(E_\Delta^F\right)^k\right) = \max_{n=1,\ldots,N_x}\left\{|\lambda_n^m-\mu_n|^k\frac{1}{(k-1)!}\left[\sum_{j=0}^{N_T-k}\left(\prod_{i=1}^{k-1}(j+i)\right)|\mu_n|^j\right]\right\}
\end{equation}
and
\begin{equation}\label{eq:err_red_factor_ra_FCF}
	\sigma_\text{RA}\left(\left(E_\Delta^{FCF}\right)^k\right) = \max_{n=1,\ldots,N_x}\left\{|\lambda_n^m-\mu_n|^k|\lambda_n^m|^k\frac{1}{(k-1)!}\left[\sum_{j=0}^{N_T-2k}\left(\prod_{i=1}^{k-1}(j+i)\right)|\mu_n|^j \right]\right\}.
\end{equation}

\subsubsection{Systems of PDEs}\label{ssub:ra:pde:systems}
The assumption that a time-stepping operator can be diagonalized by a
unitary transformation does not necessarily hold true for
time-stepping operators arising from systems of PDEs, since such time
integrators can easily be non-symmetric. However, when considering a
two-level time-grid hierarchy and a fixed spatial discretization on
both grid levels, the assumption that the fine- and coarse-scale time
integrators, $\Phi$ and $\Phi_c$, can be simultaneously diagonalized
may still hold true for time-stepping problems arising from systems of
PDEs. One possible generalization of the two-level reduction analysis
to handle this case is to derive bounds for the coarse-grid iteration
matrix in a mass matrix-induced norm instead of in the
$L_2$-norm\cite{VADobrev_etal_2017, BSouthworth_2018a}. Here, to enable a comparison of the different analysis techniques for the systems case, we derive bounds of the $L_2$-norm of the coarse-grid iteration matrices.

We consider $q\times q$ block time-stepping operators, $\Phi$ and
$\Phi_c$, arising from a
semi-discretization in space of a system of $q$ scalar PDEs with $q$
unknown functions; note that in the case of a PDE system with vector
equations, we first have to break down the vectors into their
scalar components. Furthermore, let $E_\Delta$ be the coarse-grid
iteration matrix of a two-level method, given in Equation
\eqref{eq:cg:Parareal:mgrit}, defined using the block time
integrators, $\Phi$ and $\Phi_c$. To derive an upper bound on the
$L_2$-norm of $E_\Delta$, we first consider a Fourier ansatz in
space. More precisely, we use the Fourier matrix, $F$, of discretized
Fourier modes of a basis that accounts for the $q\times q$-coupling
within $\Phi$ and $\Phi_c$ and, possibly, different nodal
coordinates. Note that $F$ is a square matrix of dimension equal to
that of $\Phi$ and $\Phi_c$. We then reorder the transformed block
matrix, $\mathcal{F}^{-1}E_\Delta\mathcal{F}$, where $\mathcal{F} =
I_{N_T+1}\otimes F$, from $N_T+1\times N_T+1$ blocks of size
$qN_x\times qN_x$, where $qN_x=\dim(\Phi)$, to a block-diagonal matrix
with $N_x\times N_x$ blocks of size $q(N_T+1)\times q(N_T+1)$. For the
two-level algorithm with $F$-relaxation, for example, we obtain
\begin{equation}\label{eq:ra:st:blk:diag}
	\mathcal{P}^{-1}\mathcal{F}^{-1}E_\Delta^F\mathcal{F}\mathcal{P} = \Diag(\widetilde{E}^F_{\Delta,n})_{n=1,\ldots,N_x}
	\quad\text{with }
	\widetilde{E}_{\Delta,n}^F = (\widetilde{\Phi}_n^m - \widetilde{\Phi}_{c,n})\begin{bmatrix}
		0\\
		I & 0\\
		\widetilde{\Phi}_{c,n} & I & 0\\
		\vdots & \ddots & \ddots & \ddots\\
		\widetilde{\Phi}_{c,n}^{N_T-1} & \cdots & \widetilde{\Phi}_{c,n} & I & 0
	\end{bmatrix},
\end{equation}
and where $F^{-1}\Phi F = \Diag(\widetilde{\Phi}_1,\ldots,\widetilde{\Phi}_{N_x})$ and $F^{-1}\Phi_c F = \Diag(\widetilde{\Phi}_{c,1},\ldots,\widetilde{\Phi}_{c,N_x})$. Note that the spatial Fourier symbols, $\widetilde{\Phi}_n$ and $\widetilde{\Phi}_{c,n}$, $n=1,\ldots,N_x$, are dense block matrices of size $q\times q$. Therefore, for each $n = 1, \ldots, N_x$, we simultaneously diagonalize the Fourier symbols, $\widetilde{\Phi}_n$ and $\widetilde{\Phi}_{c,n}$, using the eigenvector matrix, $U_n$, of $\widetilde{\Phi}_n$,
\begin{equation}\label{eq:ra:eigvec:transformation}
	U_n^{-1}\widetilde{\Phi}_{n}U_n = \Diag(\lambda_{n,1},\ldots,\lambda_{n,q}), \quad U_n^{-1}\widetilde{\Phi}_{c,n}U_n = \Diag(\mu_{n,1},\ldots,\mu_{n,q}).
\end{equation}
Finally, we reorder the transformed block-Toeplitz matrix with
diagonal blocks,
$\mathcal{U}_n^{-1}\widetilde{E}_{\Delta,n}\mathcal{U}_n$, where
$\mathcal{U}_n=I_{N_T+1}\otimes U_n$, to a block-diagonal matrix with
Toeplitz blocks. For a two-level method with F-relaxation, we obtain
\[
	\mathcal{Q}^{-1}\mathcal{U}_n^{-1}\widetilde{E}_{\Delta,n}^F\mathcal{U}_n\mathcal{Q} = \Diag(E_{\Delta,n,l}^F)_{l=1,\ldots,q}
	\quad\text{with }
	E_{\Delta,n,l}^F = (\lambda_{n,l}^m - \mu_{n,l})\begin{bmatrix}
		0\\
		1 & 0\\
		\mu_{n,l} & 1 & 0\\
		\vdots & \ddots & \ddots & \ddots\\
		\mu_{n,l}^{N_T-1} & \cdots & \mu_{n,l} & 1 & 0
	\end{bmatrix}.
\]
Using the norm computations from Section \ref{ssub:ra:scalar:pdes}, we obtain the following

\begin{lemma}
	Let $\Phi$ and $\Phi_c$ be $q\times q$ block time-stepping
        operators, and let $\widetilde{\Phi}_n$ and
        $\widetilde{\Phi}_{c,n}$, $n = 1,\ldots,N_x$, where
        $qN_x=\dim(\Phi)=\dim(\Phi_c)$ be the corresponding spatial
        Fourier symbols. Furthermore, assume that for all
        $n=1,\ldots,N_x$, $\widetilde{\Phi}_n$ and
        $\widetilde{\Phi}_{c,n}$, can be simultaneously diagonalized
        with eigenvalues $\left\{\lambda_{n,l}\right\}_{l=1,\ldots,q}$
        and $\left\{\mu_{n,l}\right\}_{l=1,\ldots,q}$, respectively,
        and that $|\mu_{n,l}| \neq 1$ for all
        $n = 1,\ldots, N_x$, $l=1,\ldots,q$. Then, the $L_2$-norm of
        the coarse-grid iteration matrices, $E_\Delta^F$ and
        $E_\Delta^{FCF}$, of the two-level methods satisfy
	\begin{equation}\label{eq:err_red_factor_ra_F_system}
		\|E_\Delta^F\|_2 \leq \max_{n=1,\ldots,N_x}\left\{\kappa(U_n)\max_{l=1,\ldots,q}\left\{|\lambda_{n,l}^m-\mu_{n,l}|\frac{(1-|\mu_{n,l}|^{N_T})}{(1-|\mu_{n,l}|)}\right\}\right\}
	\end{equation}
	and
	\begin{equation}\label{eq:err_red_factor_ra_FCF_system}
		\|E_\Delta^{FCF}\|_2 \leq \max_{n=1,\ldots,N_x}\left\{\kappa(U_n)\max_{l=1,\ldots,q}\left\{|\lambda_{n,l}^m-\mu_{n,l}|~|\lambda_{n,l}|^m\frac{(1-|\mu_{n,l}|^{N_T-1})}{(1-|\mu_{n,l}|)}\right\}\right\},
	\end{equation}
	where $\kappa(U_n)$ denotes the condition number of $U_n$.
\end{lemma}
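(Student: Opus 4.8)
The proof proceeds by composing the chain of transformations already set up in the paragraphs preceding the lemma, tracking at each stage how the Euclidean operator norm is affected. The structure has three stages: first, a unitary spatial Fourier transform that block-diagonalizes $E_\Delta$ across spatial frequencies; second, a (generally non-unitary) eigenvector transform, applied per frequency, which contributes a factor $\kappa(U_n)$; third, a final permutation reducing each frequency block to $q$ scalar Toeplitz blocks that are handled directly by the scalar results of Section \ref{ssub:ra:scalar:pdes}.

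For the first stage, I would note that the Fourier matrix $F$ may be normalized to be unitary: the LFA modes attached to the distinct mesh-point types are supported on disjoint subsets of $G_{\Delta x}$ and are therefore mutually orthogonal, while within a single type the sampled modes $\{\e^{\imath\boldsymbol{\theta}_k\cdot\mathbf{x}/\Delta x}\}$ form a scaled discrete Fourier basis. Hence $\mathcal{F} = I_{N_T+1}\otimes F$ is unitary, and since the reordering $\mathcal{P}$ is a permutation, \eqref{eq:ra:st:blk:diag} gives $\|E_\Delta^F\|_2 = \max_{n=1,\ldots,N_x}\|\widetilde{E}_{\Delta,n}^F\|_2$, and similarly for FCF-relaxation.

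For the second stage, fix $n$ and use the simultaneous diagonalization \eqref{eq:ra:eigvec:transformation} via $U_n$. With $\mathcal{U}_n = I_{N_T+1}\otimes U_n$, submultiplicativity of the spectral norm yields $\|\widetilde{E}_{\Delta,n}^F\|_2 \leq \kappa(\mathcal{U}_n)\,\|\mathcal{U}_n^{-1}\widetilde{E}_{\Delta,n}^F\mathcal{U}_n\|_2$, and because the singular values of $I_{N_T+1}\otimes U_n$ are exactly those of $U_n$, each with multiplicity $N_T+1$, we have $\kappa(\mathcal{U}_n) = \kappa(U_n)$. A further permutation $\mathcal{Q}$ block-diagonalizes $\mathcal{U}_n^{-1}\widetilde{E}_{\Delta,n}^F\mathcal{U}_n$ into the scalar Toeplitz blocks $E_{\Delta,n,l}^F$, $l=1,\ldots,q$, so $\|\mathcal{U}_n^{-1}\widetilde{E}_{\Delta,n}^F\mathcal{U}_n\|_2 = \max_l \|E_{\Delta,n,l}^F\|_2$. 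Each $E_{\Delta,n,l}^F$ is precisely the scalar matrix of Section \ref{ssub:ra:scalar:pdes} with $(\lambda_n,\mu_n)$ replaced by $(\lambda_{n,l},\mu_{n,l})$, so invoking the norm identity there (which uses the hypothesis $|\mu_{n,l}|\neq 1$ to sum the geometric series) gives $\|E_{\Delta,n,l}^F\|_2 \leq \sqrt{\|E_{\Delta,n,l}^F\|_1\,\|E_{\Delta,n,l}^F\|_\infty} = |\lambda_{n,l}^m-\mu_{n,l}|\,(1-|\mu_{n,l}|^{N_T})/(1-|\mu_{n,l}|)$. Chaining the three stages yields \eqref{eq:err_red_factor_ra_F_system}; the bound \eqref{eq:err_red_factor_ra_FCF_system} follows identically using the block structure of $E_{\Delta,n,l}^{FCF}$ and the corresponding scalar formula $\|E_{\Delta,n,l}^{FCF}\|_1 = \|E_{\Delta,n,l}^{FCF}\|_\infty = |\lambda_{n,l}^m-\mu_{n,l}|\,(1-|\mu_{n,l}|^{N_T-1})/(1-|\mu_{n,l}|)\,|\lambda_{n,l}|^m$.

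The main obstacle, and indeed the only essential departure from the scalar analysis, is the second stage: the spatial Fourier symbols $\widetilde{\Phi}_n$ arising from a PDE system are in general non-normal, so the diagonalizing transformation $U_n$ is not unitary and need not be well-conditioned. This is exactly what forces an inequality carrying the factor $\kappa(U_n)$, rather than the equality available in the scalar setting, and it is the reason the hypotheses must \emph{assume} (rather than guarantee) that $\widetilde{\Phi}_n$ and $\widetilde{\Phi}_{c,n}$ are simultaneously diagonalizable. A secondary bookkeeping point is verifying unitarity of $\mathcal{F}$ in the mixed-finite-element setting via the disjoint-support argument above, together with checking that $\mathcal{P}$ and $\mathcal{Q}$ are genuine permutations, so that only the eigenvector transform introduces any norm distortion.
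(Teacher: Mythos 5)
Your proposal is correct and follows essentially the same route as the paper's proof: a unitary spatial Fourier transform plus permutation giving $\|E_\Delta\|_2=\max_n\|\widetilde{E}_{\Delta,n}\|_2$, then submultiplicativity with the (non-unitary) eigenvector similarity contributing $\kappa(U_n)=\kappa(\mathcal{U}_n)$, and finally the scalar $\sqrt{\|\cdot\|_1\|\cdot\|_\infty}$ bound applied to each Toeplitz block $E_{\Delta,n,l}$. The only additions beyond the paper's argument are the explicit justification that $F$ can be taken unitary and that tensoring with the identity preserves the condition number, both of which the paper leaves implicit.
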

\begin{proof}
	Let $F$ denote the Fourier matrix that transforms $\Phi$ and
        $\Phi_c$ into block-diagonal form with blocks given by the
        spatial Fourier symbols, $\left\{\widetilde{\Phi}_n\right\}$ and
        $\left\{\widetilde{\Phi}_{c,n}\right\}$. Furthermore, let
        $\mathcal{P}$ denote the permutation that reorders the
        Fourier-transformed block matrix,
        $\mathcal{F}^{-1}E_\Delta\mathcal{F}$, where $\mathcal{F} =
        I_{N_T+1}\otimes F$, into global block-diagonal structure with
        $N_x\times N_x$ Toeplitz blocks, $\widetilde{E}_{\Delta,n}$,
        given in Equation \eqref{eq:ra:st:blk:diag} for the two-level
        method with F-relaxation. Both applied transformations are unitary transformations and, thus, 
	\[
		\|E_\Delta\|_2 = \max_{n=1,\ldots,N_x}\|\widetilde{E}_{\Delta,n}\|_2.
	\]
	The bound for the two-level method with F-relaxation can then be derived as
	\begin{align*}
		\|E_\Delta^F\|_2 &= \max_{n=1,\ldots,N_x}\|\widetilde{E}_{\Delta,n}^F\|_2\\
		&\leq \max_{n=1,\ldots,N_x}\left\{\|\mathcal{U}_n\|_2~\|\mathcal{U}_n^{-1}\widetilde{E}_{\Delta,n}^F\mathcal{U}_n\|_2~\|\mathcal{U}_n^{-1}\|_2\right\} = \max_{n=1,\ldots,N_x}\left\{\kappa(U_n)\|\mathcal{U}_n^{-1}\widetilde{E}_{\Delta,n}^F\mathcal{U}_n\|_2\right\}\\
		&\leq \max_{n=1,\ldots,N_x}\left\{\kappa(U_n)\max_{l=1,\ldots,q}\left\{\|E_{\Delta,n,l}^F\|_2\right\}\right\},
	\end{align*}
	with $\mathcal{U}_n=I_{N_T+1}\otimes U_n$, where $U_n$ is the transformation defined in Equation \eqref{eq:ra:eigvec:transformation}, for $n=1,\ldots,N_x$, and $\mathcal{Q}^{-1}\mathcal{U}_n^{-1}\widetilde{E}_{\Delta,n}^F\mathcal{U}_n\mathcal{Q} = \Diag(E_{\Delta,n,l}^F)_{l=1,\ldots,q}$, with permutation $\mathcal{Q}$ that reorders  $\mathcal{U}_n^{-1}\widetilde{E}_{\Delta,n}^F\mathcal{U}_n$ into block-diagonal structure with Toeplitz-blocks, $E_{\Delta,n,l}^F$.
	The bound for the coarse-grid iteration matrix of the two-level method with $FCF$-relaxation is derived analogously.
\end{proof}
\begin{remark}
	When $|\mu_{n,l}| = 1$ for $n \in \{1,\ldots, N_x\}$, $l\in\{1,\ldots,q\}$, we obtain $\|E_{\Delta,n,l}^F\|_1 = \|E_{\Delta,n,l}^F\|_\infty = |\lambda_{n,l}-\mu_{n,l}|N_T$ and $\|E_{\Delta,n,l}^{FCF}\|_1 = \|E_{\Delta,n,l}^{FCF}\|_\infty = |\lambda_{n,l}-\mu_{n,l}|(N_T-1)|\lambda_{n,l}|^m$, and we can easily get analogous bounds to those in Equations \eqref{eq:err_red_factor_ra_F_system}-\eqref{eq:err_red_factor_ra_FCF_system}, and \eqref{eq:err_red_factor_ra_scalar_case}.
\end{remark}


\subsection{Blending SAMA and reduction analysis}\label{ssec:comparison}

The closeness of SAMA and the two-level reduction analysis motivates
applying ideas from SAMA in the two-level reduction analysis and vice
versa. In one direction, we can easily make use of the bound commonly
used in reduction analysis to improve the computability of the SAMA
prediction, in place of the singular value computation that was used
in prior work\cite{SFriedhoff_SMacLachlan_2015a}.  This yields
\begin{equation}\label{eq:err_red_factor_sama_bound}
	\sigma_\text{SAMA}(E^k) \leq \widetilde{\sigma}_\text{SAMA}(E^k) := \sup\left\{\sqrt{\left\|\left(B_{\boldsymbol{\theta}}^{(E)}\right)^k\right\|_1~\left\|\left(B_{\boldsymbol{\theta}}^{(E)}\right)^k\right\|_\infty} ~:~ \boldsymbol{\theta}\in(-\pi,\pi]^2\right\},\quad\text{for }k\geq 1,
\end{equation}
where $E$ is the iteration matrix using either F- or FCF-relaxation.
Similarly, we can compute the SAMA bound using only the coarse
representation of the propagators, with $E_\Delta^F$ or
$E_\Delta^{FCF}$.  Both of these are explored below, in Section
\ref{sub:comparison}.

In the other direction, we can investigate the differences between the
predictions made by reduction analysis on the coarse
grid\cite{VADobrev_etal_2017} or consider the full fine-grid iteration
matrix as in SAMA.  Here, it is easy to derive the corresponding
bounds, of
	\[
	\widetilde{\sigma}_\text{RA}(E^F) = \max_{n=1,\ldots,N_x}\sqrt{\|E_n^F\|_1\|E_n^F\|_\infty}\quad\text{and}\quad \widetilde{\sigma}_\text{RA}(E^{FCF}) = \max_{n=1,\ldots,N_x}\sqrt{\|E_n^{FCF}\|_1\|E_n^{FCF}\|_\infty}
	\]
	with
	\begin{align*}
	\|E_n^F\|_1 &= |\lambda_n^m-\mu_n|\left(|\mu_n^{N_T-1}| + \sum_{i=0}^{m-1}|\lambda_\omega^i|\cdot\sum_{j=0}^{N_T-2}|\mu_\omega^j|\right), &&\quad
	\hspace*{1.1em}\|E_n^F\|_\infty = \|E_{\Delta,n}^F\|_\infty \hspace*{.65em}= |\lambda_n^m - \mu_n|\sum_{j=0}^{N_T-1}|\mu_n^j|\\[.5\normalbaselineskip]
	\|E_n^{FCF}\|_1 &= |\lambda_n^m-\mu_n||\lambda_n^m|\left(|\mu_n^{N_T-2}| + \sum_{i=0}^{m-1}|\lambda_n^i|\cdot\sum_{j=0}^{N_T-3}|\mu_n^j|\right), &&\quad
	\|E_n^{FCF}\|_\infty = \|E_{\Delta,n}^{FCF}\|_\infty = |\lambda_n^m - \mu_n||\lambda_n^m|\sum_{j=0}^{N_T-2}|\mu_n^j|.
        \end{align*}
        For multiple iterations, $k\geq 2$, these bounds become
	\[
	\widetilde{\sigma}_\text{RA}\left(\left(E^F\right)^k\right) = \max_{n=1,\ldots,N_x}\sqrt{\left\|\left(E_n^F\right)^k\right\|_1~\left\|\left(E_n^F\right)^k\right\|_\infty},\quad \widetilde{\sigma}_\text{RA}\left(\left(E^{FCF}\right)^k\right) = \max_{n=1,\ldots,N_x}\sqrt{\left\|\left(E_n^{FCF}\right)^k\right\|_1~\left\|\left(E_n^{FCF}\right)^k\right\|_\infty}
	\]
	with
	\begin{align*}
	\left\|\left(E_n^F\right)^k\right\|_1 &= |\lambda_n^m-\mu_n|^k\left[{N_T-1 \choose N_T-k}|\mu_n|^{N_T-k}+\sum_{l=0}^{m-1}|\lambda_n^{kl}|\left(\frac{1}{(k-1)!}\sum_{j=0}^{N_T-k-1}\left(\prod_{i=1}^{k-1}(j+i)\right)|\mu_n|^j\right)\right],\\[.5\normalbaselineskip]
	\left\|\left(E_n^F\right)^k\right\|_\infty &= \left\|\left(E_{\Delta,n}^F\right)^k\right\|_\infty = |\lambda_n^m-\mu_n|^k\frac{1}{(k-1)!}\left[\sum_{j=0}^{N_T-k}\left(\prod_{i=1}^{k-1}(j+i)\right)|\mu_n|^j\right],\\[.5\normalbaselineskip]
	\left\|\left(E_n^{FCF}\right)^k\right\|_1 &= |\lambda_n^m-\mu_n|^k~|\lambda_n^m|^k\left[{N_T-k-1\choose N_T-2k}|\mu_n^{N_T-2k}|+\sum_{l=0}^{m-1}|\lambda_n^{kl}|\left(\frac{1}{(k-1)!}\sum_{j=0}^{N_T-2k-1}\left(\prod_{i=1}^{k-1}(j+i)\right)|\mu_n|^j\right)\right],\\[.5\normalbaselineskip]
	\left\|\left(E_n^{FCF}\right)^k\right\|_\infty &= \left\|\left(E_{\Delta,n}^{FCF}\right)^k\right\|_\infty = |\lambda_n^m-\mu_n|^k~|\lambda_n^m|^k\frac{1}{(k-1)!}\left[\sum_{j=0}^{N_T-2k}\left(\prod_{i=1}^{k-1}(j+i)\right)|\mu_n|^j \right].
\end{align*}

\subsection{\revision{Differences, advantages, and disadvantages of the three tools}}
\revision{
The three mode analysis tools presented above are all applied to the MGRIT iteration matrices, $E$, defined in Equations \eqref{eq:Parareal:it:matrix} and \eqref{eq:two:level:mgrit}, or to their coarse-level versions, $E_\Delta$. Each predicts the worst-case error reduction in the $k$-th iteration, $k\geq 1$, by providing an upper bound or an approximation of $\|E^k\|_2$ or of $\|E_\Delta^k\|_2$. The analysis of an iteration matrix, $E$, by means of the three tools can be summarized as follows.
\begin{enumerate}
	\item Compute the spatial Fourier symbols of $\widetilde{\Phi}_{\boldsymbol{\theta}}$ and $\widetilde{\Phi}_{c,\boldsymbol{\theta}}$ (each of size $q\times q$) of the time integrators on the fine and on the coarse grids for $N_{\boldsymbol{\theta}}$ discrete values of $\boldsymbol{\theta}\in(-\pi,\pi]^d$, $d = 1,2$.
	\item \textbf{LFA}\\[-1.5\baselineskip]
	\begin{enumerate}
		\item[i)] Compute the space-time Fourier symbol $\widehat{E}(\boldsymbol{\theta},\omega^{(0)})$ (of size $mq\times mq$) for $N_\omega$ discrete values of $\omega^{(0)}\in(-\pi/m,\pi/m]$.
		\item[ii)] For each $k = 1, 2, \ldots$\\
		\hspace*{.02\linewidth}\parbox{.98\linewidth}{Compute $\|(\widehat{E}(\boldsymbol{\theta},\omega^{(0)}))^k\|_2$ for all $(\boldsymbol{\theta},\omega^{(0)})$-tuples and maximize over the $N_{\boldsymbol{\theta}}N_\omega$ values.}
	\end{enumerate}
	\textbf{SAMA}\\[-1.5\baselineskip]
	\begin{enumerate}
		\item[i)] Compute the SAMA block $B_{\boldsymbol{\theta}}^{(E)}$ (of size $(N_t+1)q\times(N_t+1)q)$.
		\item[ii)] For each $k = 1, 2, \ldots$\\
		\hspace*{.02\linewidth}\parbox{.98\linewidth}{Compute $\|(B_{\boldsymbol{\theta}}^{(E)})^k\|_2$ or $\sqrt{\|(B_{\boldsymbol{\theta}}^{(E)})^k\|_1\|(B_{\boldsymbol{\theta}}^{(E)})^k\|_\infty}$ for all $\boldsymbol{\theta}$ and maximize over the $N_{\boldsymbol{\theta}}$ values.}
	\end{enumerate}
	\textbf{RA}\\[-1.5\baselineskip]
	\begin{enumerate}
		\item[i)] Simultaneously diagonalize $\widetilde{\Phi}_{\boldsymbol{\theta}}$ and $\widetilde{\Phi}_{c,\boldsymbol{\theta}}$ (each of size $q\times q$) to obtain eigenvalues $\{\lambda_{\boldsymbol{\theta},l}\}_{l=1,\ldots,q}$ and $\{\mu_{\boldsymbol{\theta},l}\}_{l=1,\ldots,q}$.
		\item[ii)] For each $k = 1, 2, \ldots$\\
		\hspace*{.02\linewidth}\parbox{.98\linewidth}{Compute bound of $\|E_\Delta^k\|_2$ or $\|E^k\|_2$ by using $\{\lambda_{\boldsymbol{\theta},l}\}$ and $\{\mu_{\boldsymbol{\theta},l}\}$ in formulas for $\|E_\Delta^k\|_1$ and $\|E_\Delta^k\|_\infty$ or for $\|E^k\|_1$ and $\|E^k\|_\infty$ for all $\boldsymbol{\theta}$ and $l$ and maximize over the $qN_{\boldsymbol{\theta}}$ values.}
	\end{enumerate}
\end{enumerate}

The most important advantages and disadvantages of the three mode analysis tools are as follows:\\[-.5\baselineskip]

\textbf{LFA}\\[-1.5\baselineskip]
\begin{itemize}\itemsep0em
	\item[$+$] relatively low computational cost
	\item[$-$] good predictions only for large numbers of time steps, since finiteness of time interval is ignored; exactness property not captured
\end{itemize}

\textbf{SAMA}\\[-1.5\baselineskip]
\begin{itemize}\itemsep0em
	\item[$+$] accurate predictions for scalar and systems cases; takes finiteness of time interval into account; captures exactness property
	\item[$-$] high computational cost, especially for large numbers of time steps
\end{itemize}

\textbf{RA}\\[-1.5\baselineskip]
\begin{itemize}\itemsep0em
	\item[$+$] ease of computing a bound for small and large numbers of time steps; takes finiteness of time interval into account; captures exactness property when considering powers of iteration matrices as in Remark \ref{rem:RApowers}
	\item[$-$] bound relies on the assumption of unitary diagonalizability
\end{itemize}

}

%
%
\section{Numerical results}\label{sec:num_results}

Numerical experiments presented in this section are organized in two parts: first, in Section \ref{sub:comparison}, we compare and contrast the three mode analysis tools of Section \ref{sec:analysis_tools}. Secondly, Sections \ref{sub:investigating_adv} and \ref{sub:investigating_elasticity} are devoted to using appropriate tools for gaining insight into effects of model and algorithmic parameters on the convergence behavior of the methods presented in Section \ref{sec:pint_methods}.

\subsection{Comparing the three mode analysis tools}\label{sub:comparison}
The three mode analysis tools differ, in particular, in the treatment of the time dimension. While space-time LFA uses a Fourier ansatz in space and time, SAMA couples LFA in space with algebraic computations in time, and the two-level reduction analysis considers error propagation only on the coarse time grid. In this section, we compare and contrast the predictions of the three methods for the two hyperbolic model problems, linear advection in one dimension and incompressible linear elasticity in two dimensions. Here, both problems are discretized on a space-time mesh of size $64\times 64$ or $64^2\times 64$, respectively, with spatial mesh size ${\Delta x}=1/2$ and time-step size $\Delta t=1/10$. For the linear advection problem, the flow speed is chosen to be $c=1$ and the material parameters of the elasticity problem are chosen to be $\mu = \rho = 1$; the influence of these model parameters on the convergence behavior is considered in Sections \ref{sub:adv_influence_parameters} and \ref{sub:investigating_elasticity}, respectively.

In all of the mode analysis tools, we consider a Fourier ansatz in
space. Note that for the two-level reduction analysis, this ansatz
saves the cost of a computationally expensive solution of an 
eigenvalue problem, and (for all cases) is equivalent to rigorously 
analysing the spatial problems with periodic boundary conditions. 
The results presented here sample the Fourier frequency, 
$\theta\in(-\pi,\pi]$, or the Fourier frequency pair,
  $\boldsymbol{\theta}=(\theta_x,\theta_y)\in(-\pi,\pi]^2$,
    respectively, on a discrete mesh with spacing $h_\theta =
    \pi/32$. For the space-time LFA predictions, the temporal Fourier
    base frequency, $\omega_0\in(-\pi/m,\pi/m]$, is additionally
      sampled on a discrete mesh with spacing $h_\omega = \pi/32$. The
      impact of finer Fourier frequency meshes was negligible in the
      examples considered here. Figure \ref{fig:stlfa_sama_ra} shows
      the error reduction factors, $\sigma_\text{LFA}$,
      $\sigma_\text{SAMA}$, and $\sigma_\text{RA}$, defined in
      Equations \eqref{eq:err_red_factor_lfa},
      \eqref{eq:err_red_factor_sama}, and
      \eqref{eq:err_red_factor_ra_F}-\eqref{eq:err_red_factor_ra_FCF},
      respectively, for the first 10 two-level iterations with F- and
      FCF-relaxation with factor-2 temporal coarsening applied to
      linear advection and linear elasticity. Note that
      the error reduction factors, $\sigma_\text{LFA}$ and
      $\sigma_\text{SAMA}$, of space-time LFA and of SAMA are based on
      measuring the $L_2$-norm of powers of the full iteration
      matrices, i.\,e., $\sigma_\text{LFA}$ and
      $\sigma_\text{SAMA}$ are upper bounds for the worst-case error reduction 
      at all time points, whereas the error reduction factor,
      $\sigma_\text{RA}$, of the two-level reduction analysis provides
      bounds on the $L_2$-norm of powers of the coarse-grid iteration
      matrices, i.\,e., upper bounds for the worst-case error reduction only at 
      C-points. Results show that LFA only predicts the initial
      convergence behavior, while both SAMA and RA also enable good
      predictions of short-term and long-term convergence behavior, covering the
      superlinear convergence of the methods, the effect of non-normality in early iterations, and the exactness property of the algorithms \revision{(not shown here)}. Furthermore, the results demonstrate the difference in predictivity of SAMA and RA for the scalar case and the systems case. While in the scalar case of linear advection, predictions of SAMA and RA are close, in the systems case of linear elasticity, the prediction of RA is pessimistic due to the necessary condition number factor involved in the bound of the $L_2$-norm of the operators.

\begin{figure}[ht]
	\centerline{\includegraphics[width=.48\textwidth]{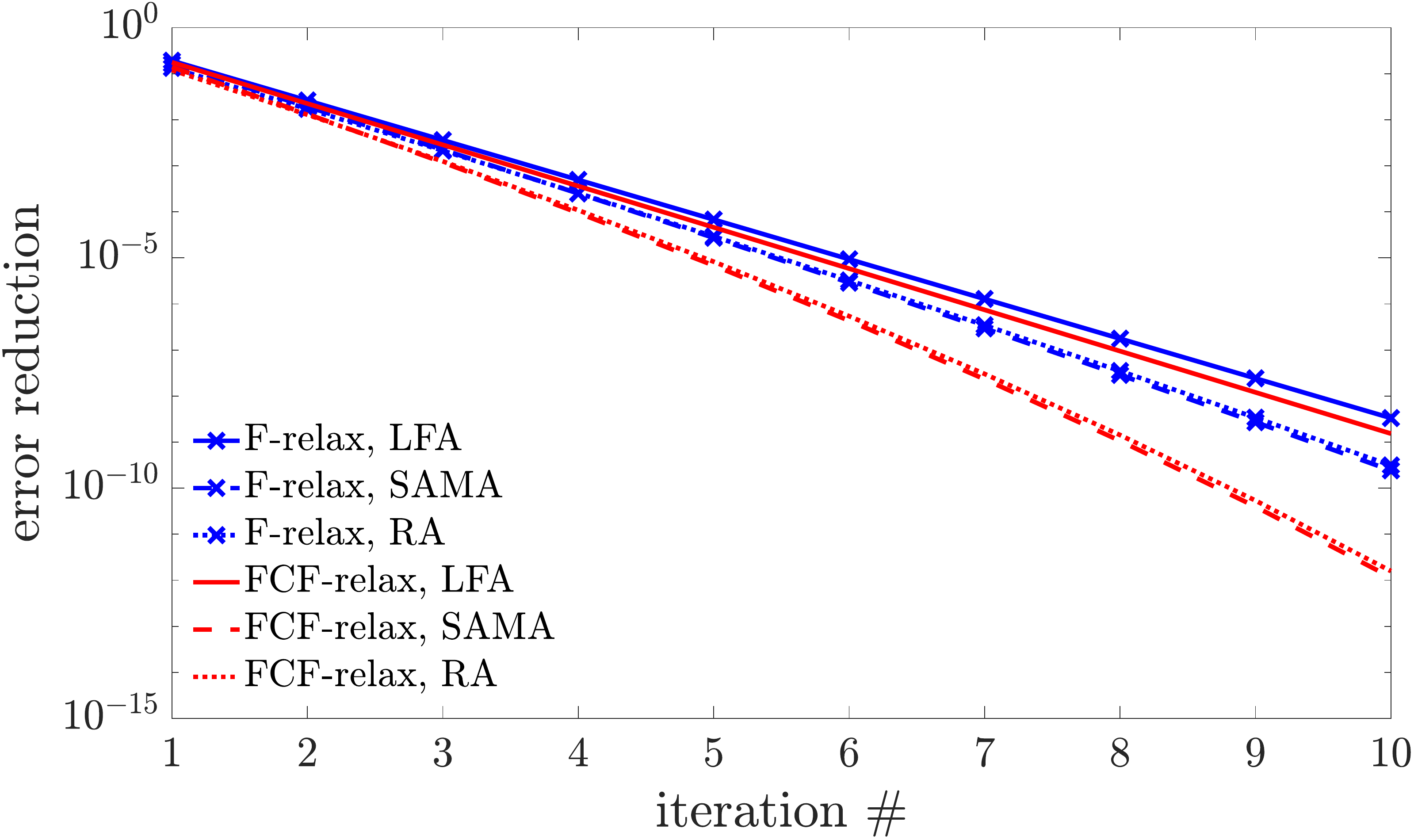}\quad\includegraphics[width=.48\textwidth]{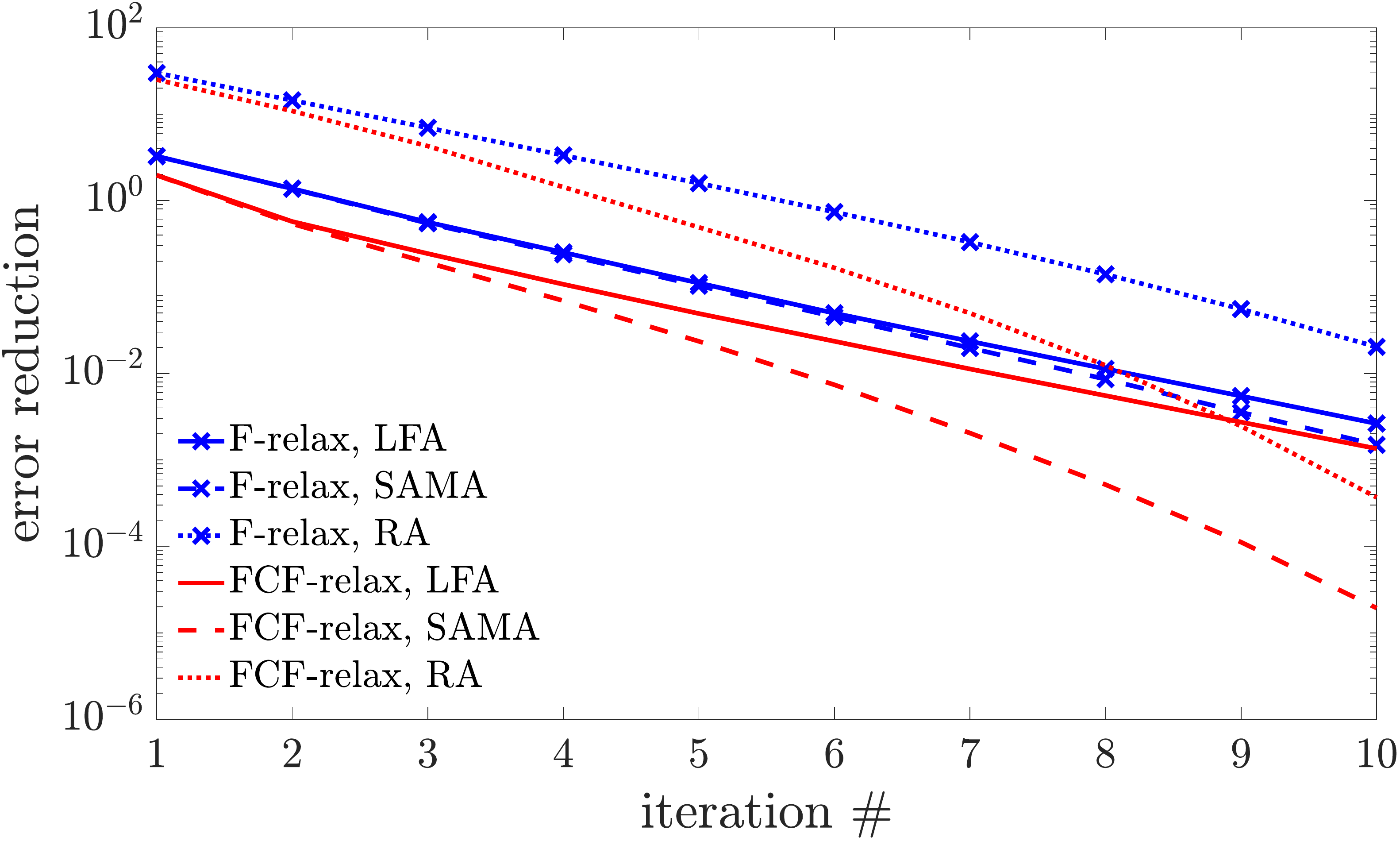}}
	\caption{Error reduction factors predicted by space-time LFA,
          SAMA, and RA for the first 10 two-level iterations with F-
          and FCF-relaxation and factor-2 coarsening applied to the linear advection problem (left) and to the linear elasticity problem (right).\label{fig:stlfa_sama_ra}}
\end{figure}

\revision{Figure \ref{fig:sama_exp} compares error reduction factors predicted by SAMA with experimentally measured error reduction factors for the first 10 two-level iterations with F- and FCF-relaxation with factor-2 temporal coarsening applied to linear elasticity, i.\,e., for the setting considered in the left plot of Figure \ref{fig:stlfa_sama_ra}. As SAMA predicts the worst-case error reduction, MGRIT convergence is measured for three initial conditions, consistenting of either exclusively low or high frequency modes, or for a linear combination of low and high frequency modes. More precisely, the initial condition is chosen as $u_0(x) = \varphi(-\theta,x)+\varphi(\theta,x) = 2\cos(\theta x)$, with $\theta = \pi/16$ or $\theta = 5\pi/8$ and $u_0(x) = \varphi(-\pi/8,x)+\varphi(\pi/8,x)+\varphi(-15\pi/16,x)+\varphi(15\pi/16,x) = 2\cos(\pi x/8) + 2\cos(15\pi x/16)$. Results demonstrate that SAMA predictions provide a good upper bound for the error reduction in each iteration. Note that when considering a random initial space-time guess, convergence is essentially the same for all initial conditions, since the initial space-time error consists of all frequencies.  In contrast, convergence is much faster for a low-frequency initial condition when considering a zero initial guess, since only low-frequency errors are present.}

\begin{figure}[ht]
	\centerline{\includegraphics[width=.48\textwidth]{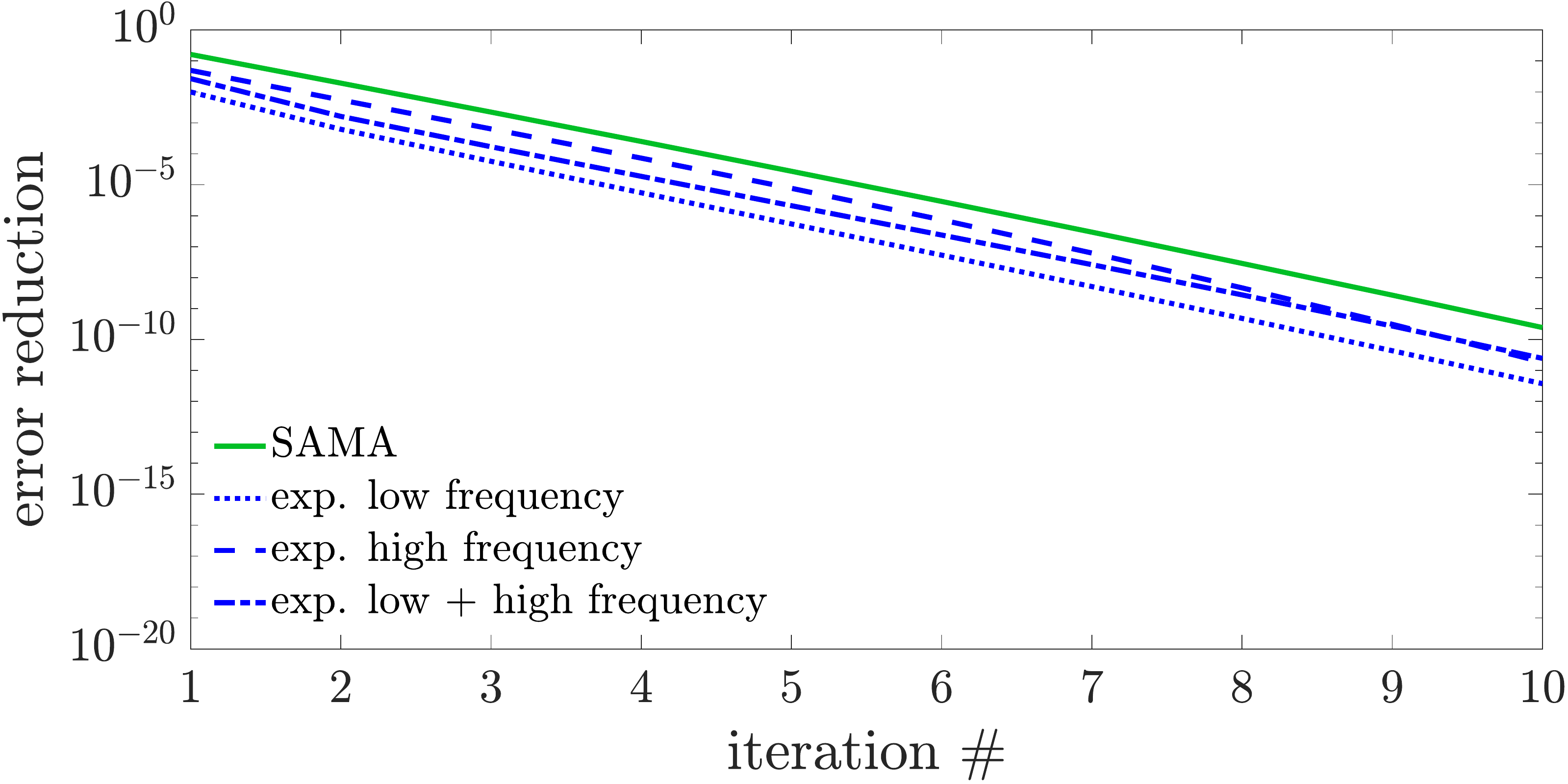}\quad\includegraphics[width=.48\textwidth]{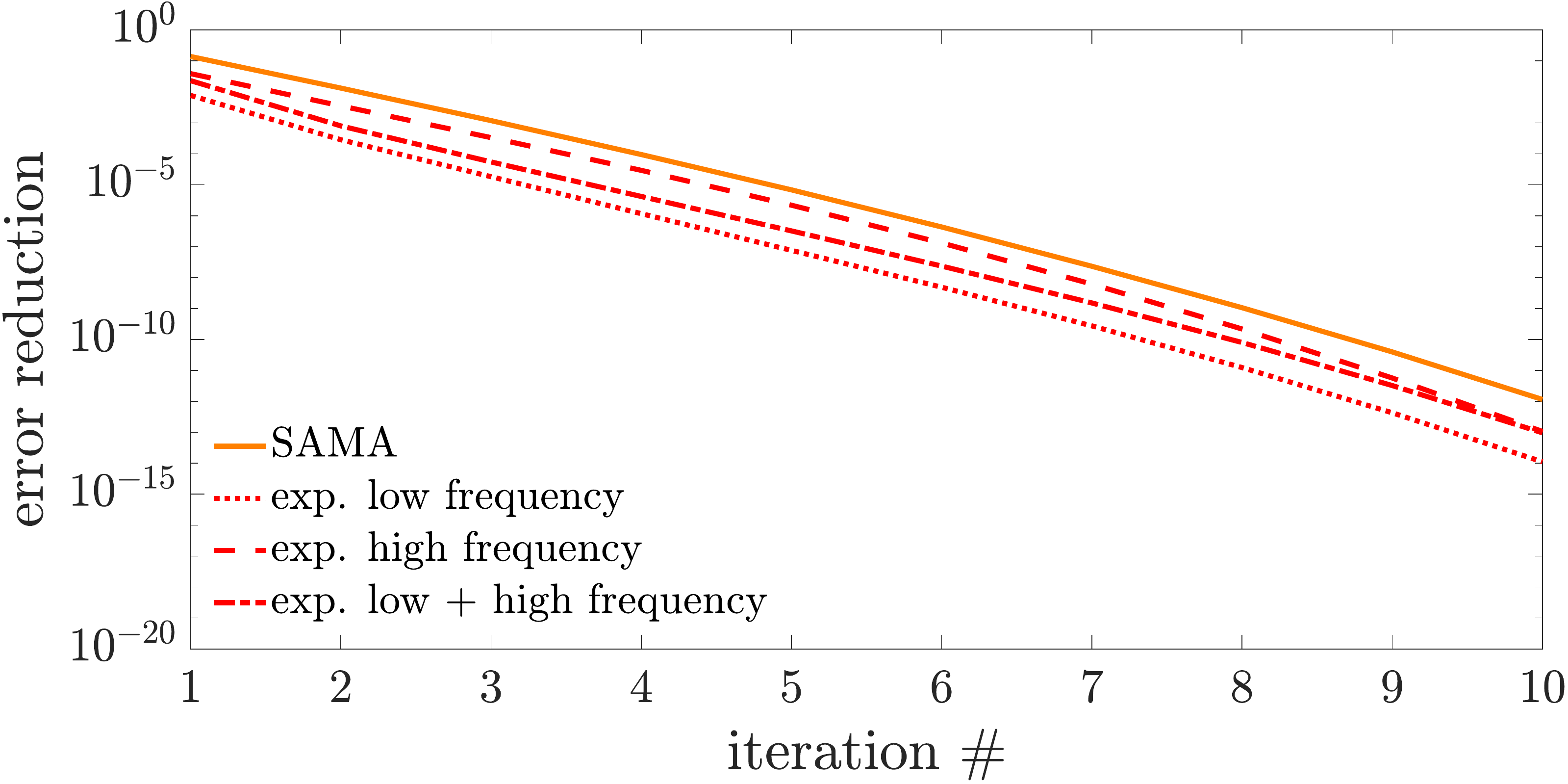}}
	\centerline{\includegraphics[width=.48\textwidth]{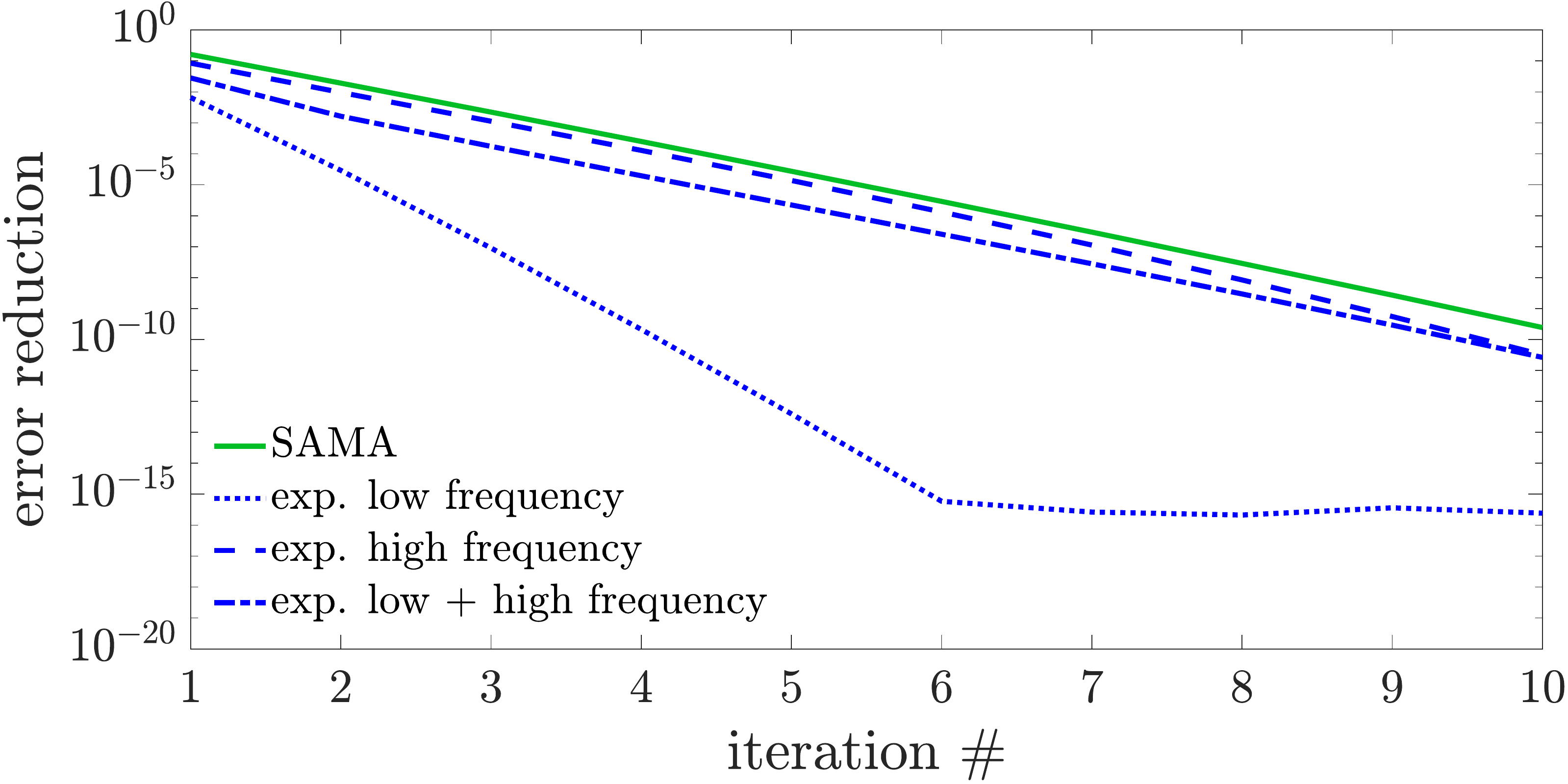}\quad\includegraphics[width=.48\textwidth]{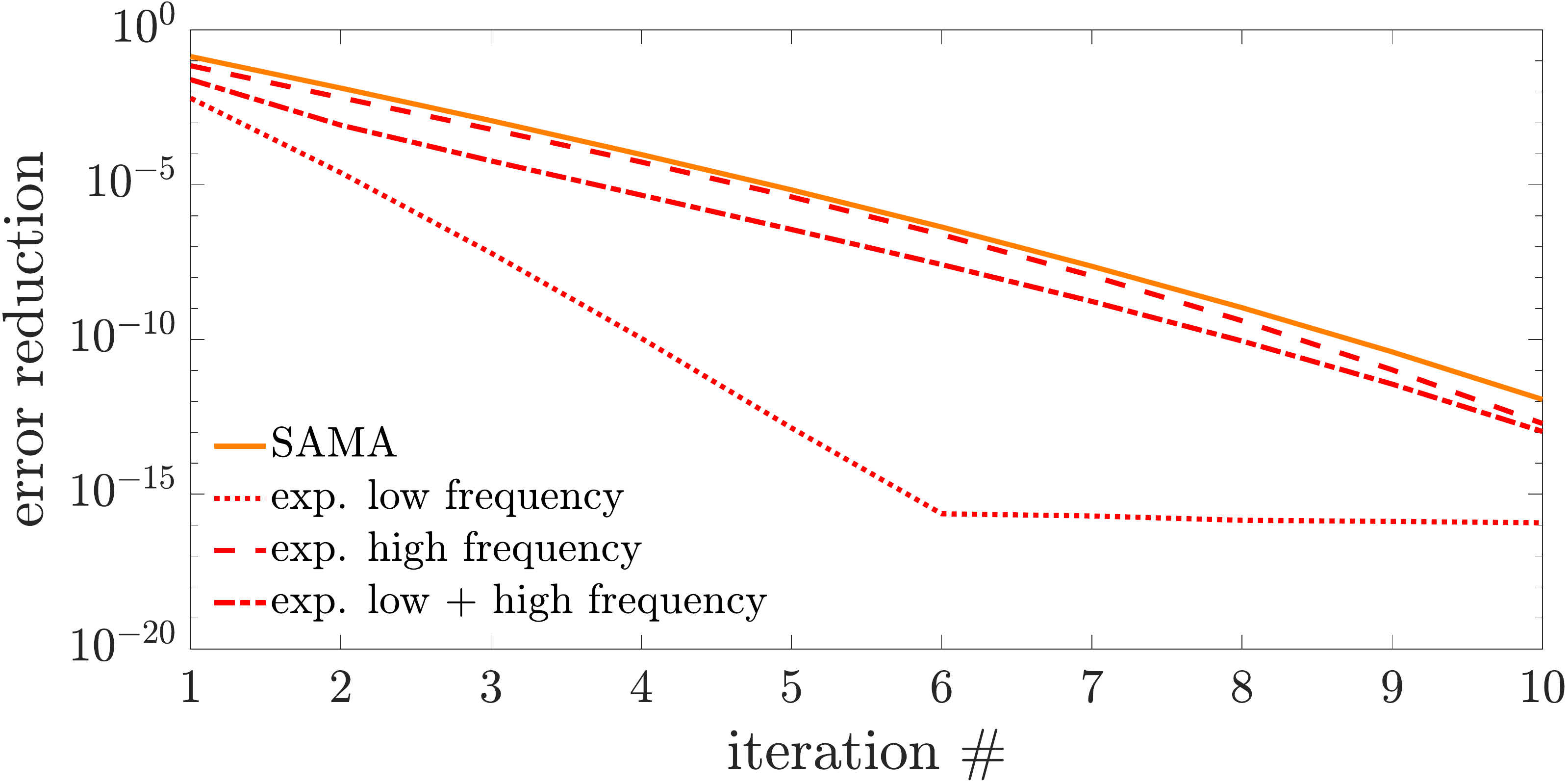}}
	\caption{\revision{Error reduction factors predicted by SAMA and measured error reduction factors for the first 10 two-level iterations with F- (left)
          and FCF-relaxation (right) and factor-2 coarsening applied to the linear advection problem with random (top row) or zero (bottom row) initial space-time guess. The initial condition is chosen as $u_0(x) = 2\cos(\pi x/16)$ (low frequency), $u_0(x) = 2\cos(5\pi x/8)$ (high frequency), or $u_0(x) = 2\cos(\pi x/8) + 2\cos(15\pi x/16)$ (low + high frequency).}\label{fig:sama_exp}}
\end{figure}


\subsubsection{Combining SAMA and reduction analysis techniques}
The above results show that, in the systems setting, SAMA provides better predictions than RA. However, when computing exact Euclidean operator norms of matrices with size equal to the number of time steps multiplied by the dimension of the spatial LFA symbol, SAMA becomes prohibitively expensive for large numbers of time steps. On the other hand, computing bounds instead of exact Euclidean operator norms makes the reduction analysis computationally tractable. Moreover, considering the iteration matrices only on the coarse grid instead of the full iteration matrices further reduces the cost of RA. Therefore, in this section, we explore combining the SAMA and RA approaches. Again, both model problems are discretized on a space-time mesh of size $64\times 64$ or $64^2\times 64$, respectively, with spatial mesh size ${\Delta x} = 1/2$ and time-step size $\Delta t = 1/10$. Furthermore, spatial Fourier frequencies are sampled on a discrete mesh with spacing $h_\theta = \pi/32$ in both dimensions.

In Figures \ref{fig:sama_ra_advection} and
\ref{fig:sama_ra_elasticity}, we explore the effects of considering
coarse-grid iteration matrices or full iteration matrices (labeled
C-pts or full in the legends) and of computing bounds or exact
Euclidean operator norms (labeled 2-norm bound or 2-norm in the
legends of the figures). More precisely, in addition to the error
reduction factors, $\sigma_\text{SAMA}$ and $\sigma_\text{RA}$ (solid
lines in the figures), also considered in Section
\ref{sub:comparison}, Figures \ref{fig:sama_ra_advection} and
\ref{fig:sama_ra_elasticity} show various variants of SAMA- and
RA-predicted error reduction factors. In particular, for RA, we
consider using full iteration matrices instead of only coarse-grid
iteration matrices. For SAMA, three additional error reduction factors
are shown: one error reduction factor based on full iteration
matrices, but computing bounds of the Euclidean operator norms, and
two error reduction factors based on coarse-grid iteration matrices,
computing either exact Euclidean operator norms or their
bounds. Results using F- and FCF-relaxation show similar relationships
between the different variants of error reduction factors. The
difference between considering coarse-grid iteration matrices or full
iteration matrices is at most a factor of about 1.4 in all
cases. Since for a temporal coarsening factor of $m$ this factor is given by
$\sqrt{m}$\cite{Hessenthaler_etal_2018a}, for small coarsening factors, considering only
coarse-grid iteration matrices gives good estimates of actual error reduction
factors that may lie in between the full and the C-point bounds or below both bounds. Furthermore, 
when using the bound on the 2-norm instead of computing the exact 2-norm, for the advection 
problem, we observe that error reduction
factors increase by a factor of at most 1.7 for linear advection, and by a factor of at most
3.7  in the case of linear elasticity. Thus, for
the analysis of MGRIT applied to the linear
elasticity probem, considering coarse-grid iteration matrices and
using the bound on the 2-norm for the SAMA prediction gives a
practical improvement over RA, with predicted error reduction factors
being a factor of about five or ten, respectively, smaller than those
of RA, and not relying on the (unsatisfied) assumption of unitary diagonalizability.

\begin{figure}[h!t]
	\centerline{\includegraphics[width=.48\textwidth]{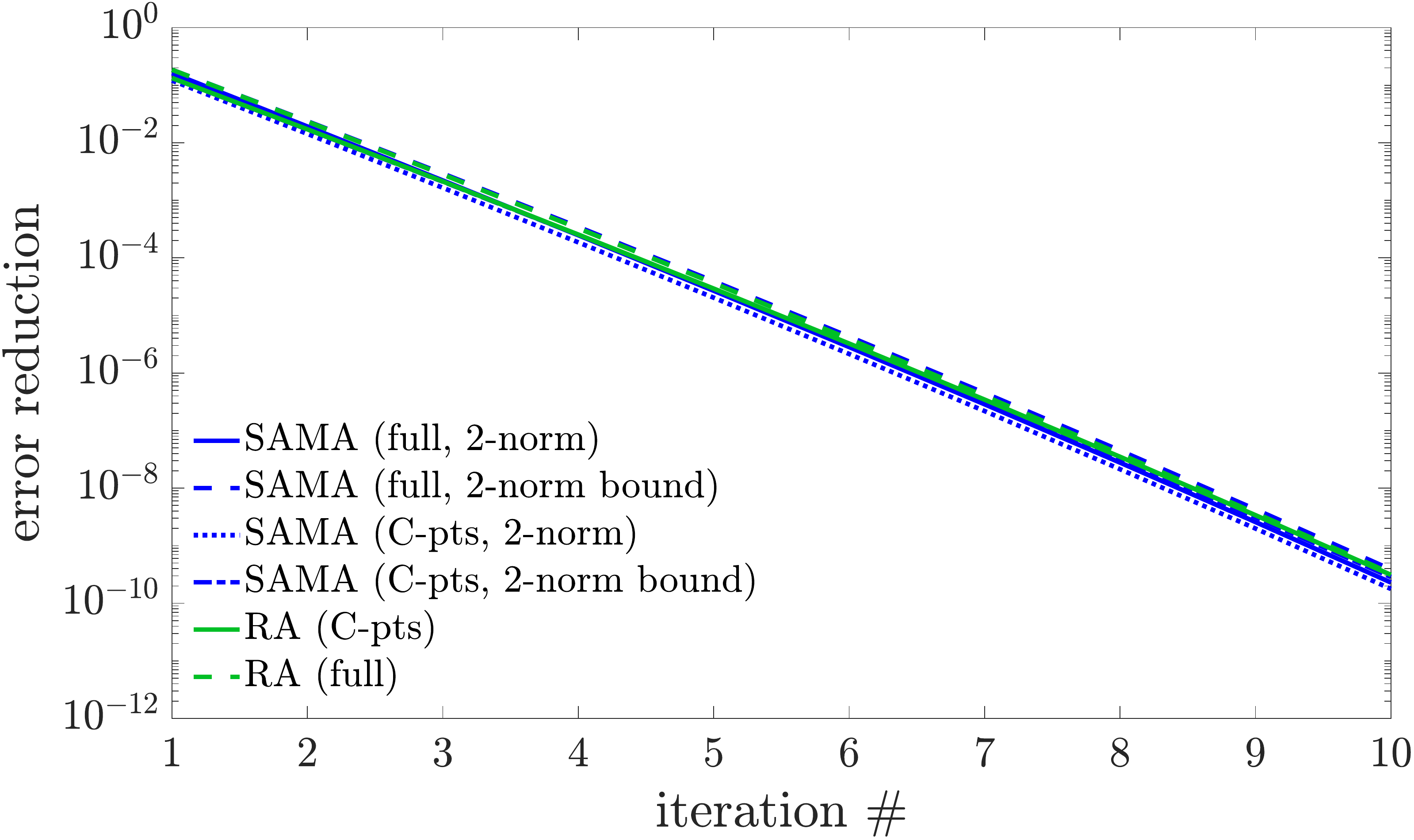}\quad\includegraphics[width=.48\textwidth]{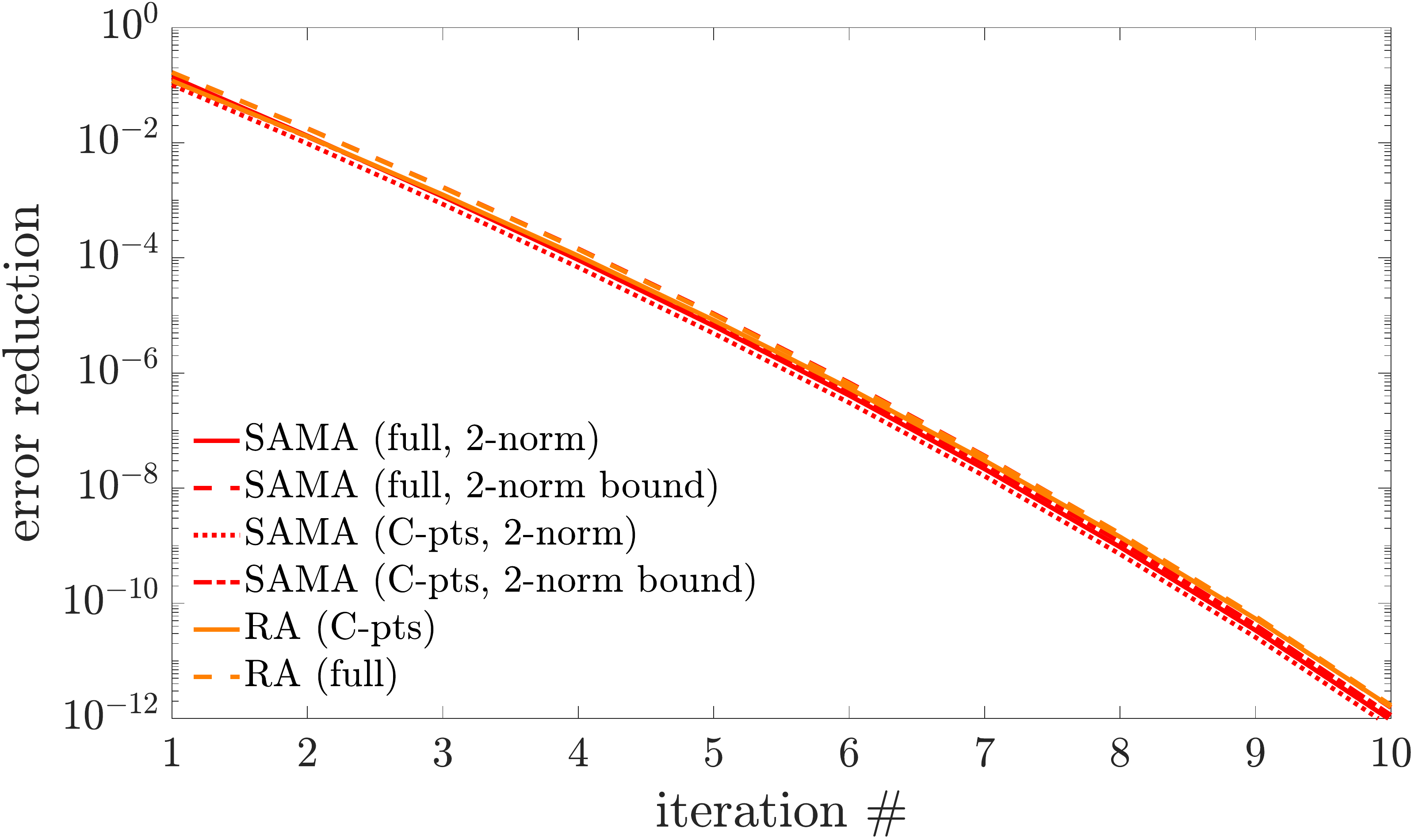}}
	\caption{Linear advection: Several variants of SAMA- and
          RA-predicted error reduction factors for the first 10
          two-level iterations with F- (left) and FCF-relaxation (right) and factor-2 temporal coarsening.\label{fig:sama_ra_advection}}
\end{figure}

\begin{figure}[h!t]
	\centerline{\includegraphics[width=.48\textwidth]{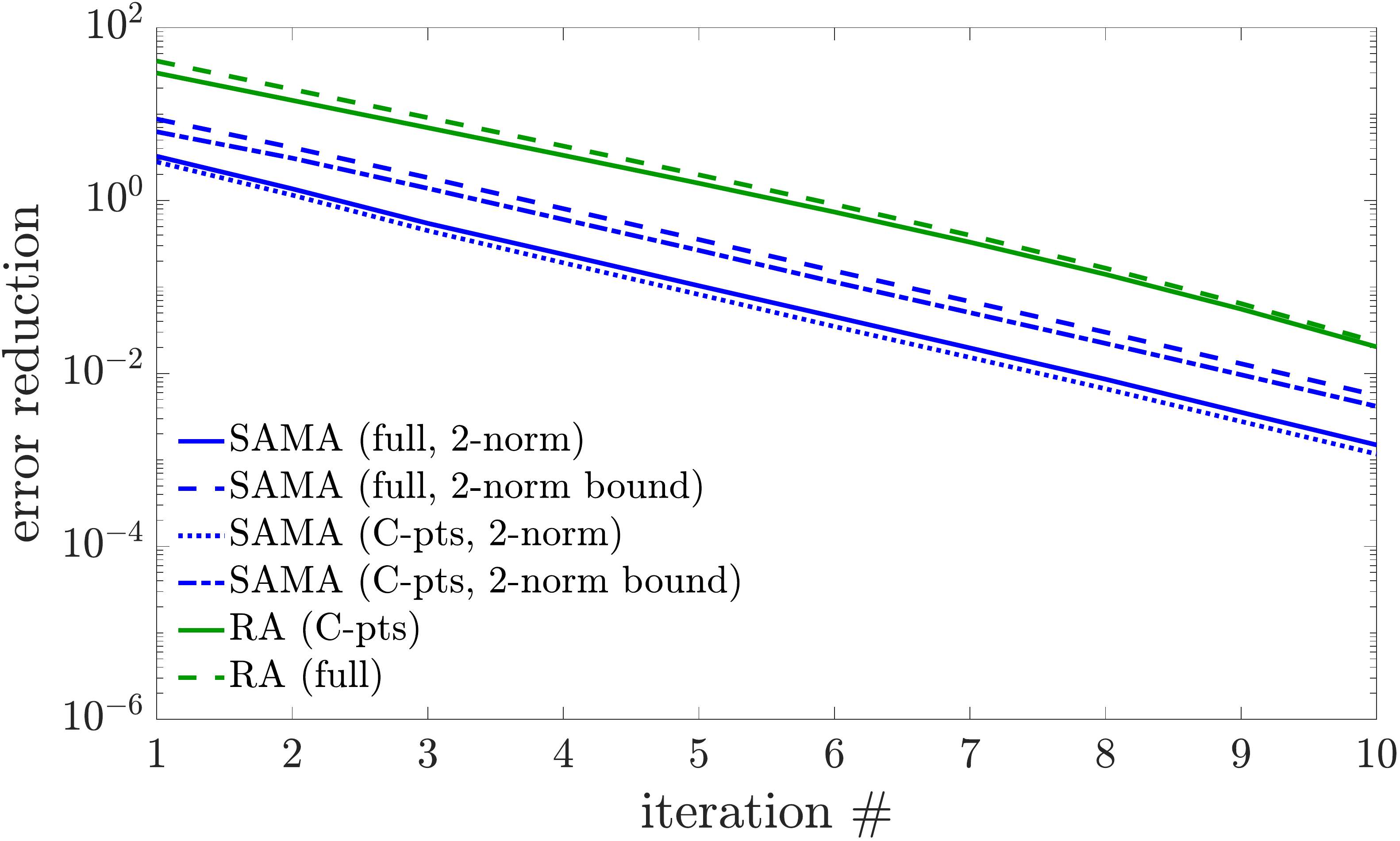}\quad\includegraphics[width=.48\textwidth]{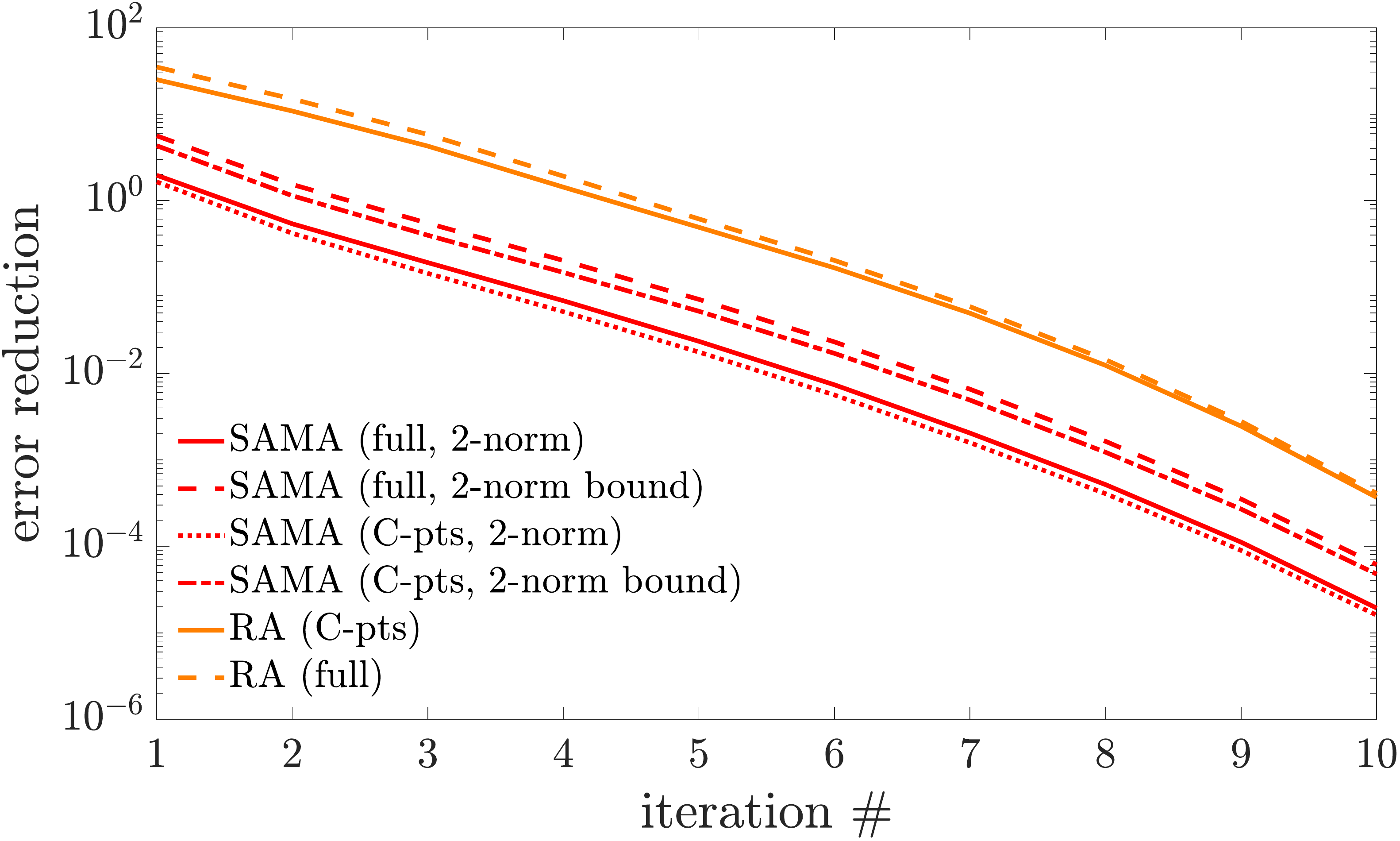}}
	\caption{Linear elasticity: Several variants of SAMA- and
          RA-predicted error reduction factors for the first 10
          two-level iterations with F- (left) and FCF-relaxation
          (right) and factor-2 temporal coarsening.\label{fig:sama_ra_elasticity}}
\end{figure}


\subsubsection{Predictivity of space-time LFA}
Motivated by the observation that long time intervals are needed for
LFA to be predictive for parabolic problems, in this section, we
investigate whether similar observations apply to hyperbolic problems
by comparing space-time LFA predictions with SAMA predictions for the
linear advection problem on time intervals of increasing lengths. More
precisely, the linear advection problem is discretized on a space-time
mesh of size $64\times N_t$ using various numbers of time steps,
$N_t$, with a spatial mesh size of ${\Delta x}=1/2$ and with fixed
time-step size $\Delta t=1/10$; again, a flow speed of $c=1$ and
factor-2 temporal coarsening are considered. We compute the error
reduction factors $\sigma_\text{LFA}$ and $\sigma_\text{SAMA}$ for the
first 20 iterations of the two-level methods. The slopes of the
best-fit lines of these error reduction factors as a function of the
iterations can be used to compute the predicted average error
reductions per iteration, $\overline{\sigma}_\text{LFA}$ and
$\overline{\sigma}_\text{SAMA}$, respectively. Figure
\ref{fig:stlfa_sama_advection} shows these predicted average error
reduction factors (left) and the difference,
$|\overline{\sigma}_\text{LFA}-\overline{\sigma}_\text{SAMA}|$, of
LFA- and SAMA-predicted average error reductions (right) as functions
of increasing numbers of time steps, $N_t$. Results show that the
difference between LFA and SAMA predictions decreases with increasing
numbers of time steps. More precisely, while for $N_t=128$,
LFA-predicted average error reduction factors using F- and
FCF-relaxation are about 23\% or 90\%, respectively, larger than SAMA
predictions, for $N_t=1024$, LFA predictions are only about 2\% or
6\%, respectively, larger. Thus, results suggest that space-time LFA
is a feasible option for predicting the convergence behavior for large
numbers of time steps, usually corresponding to long time intervals as
$T = N_t\Delta t$. Note that for hyperbolic problems, considering long
time intervals may be interesting in practice, in contrast to the case of parabolic problems for which LFA becomes predictive for time intervals that generally are longer than the diffusion time scale. 

\begin{figure}[ht]
	\centerline{\includegraphics[width=.48\textwidth]{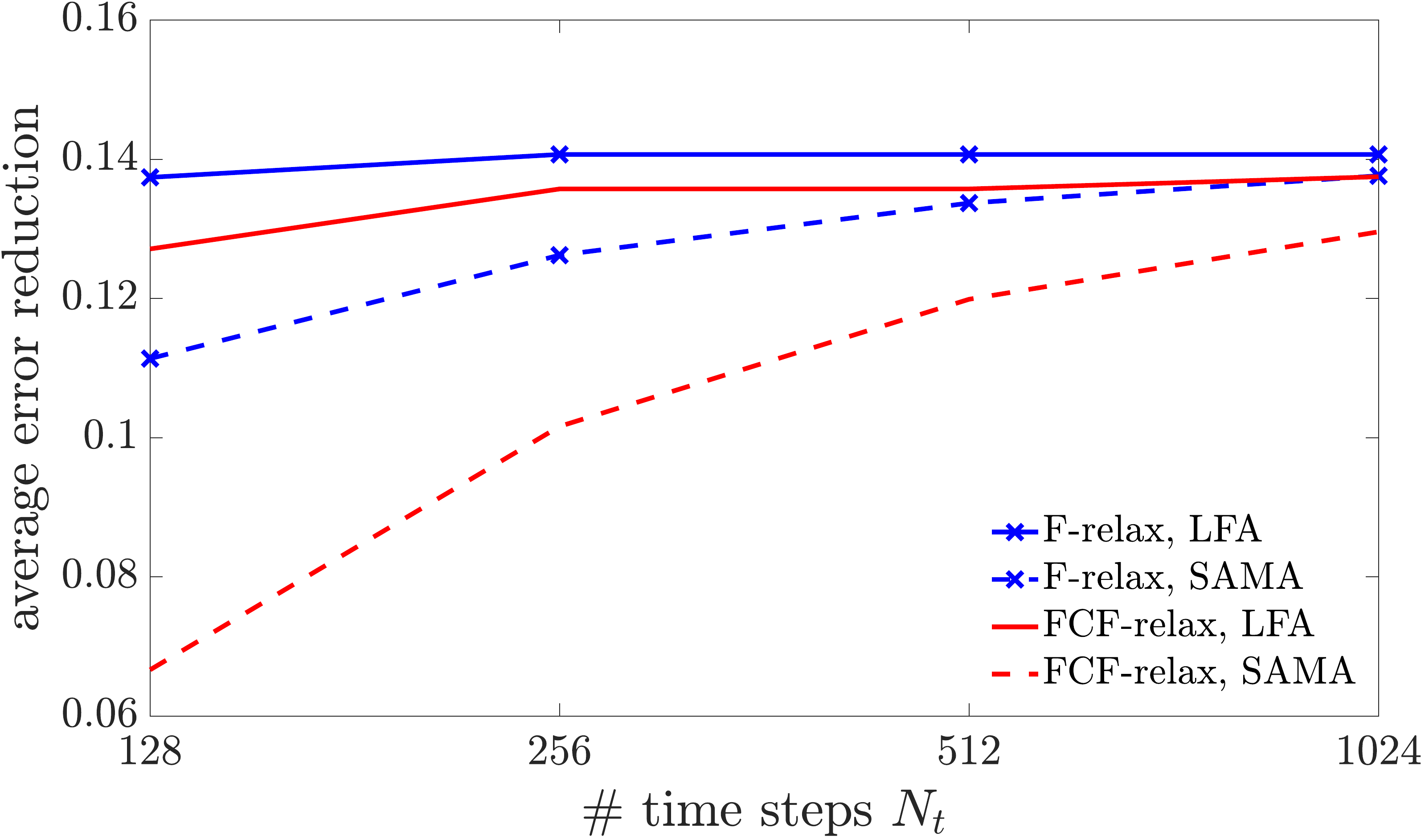}\quad
	\includegraphics[width=.48\textwidth]{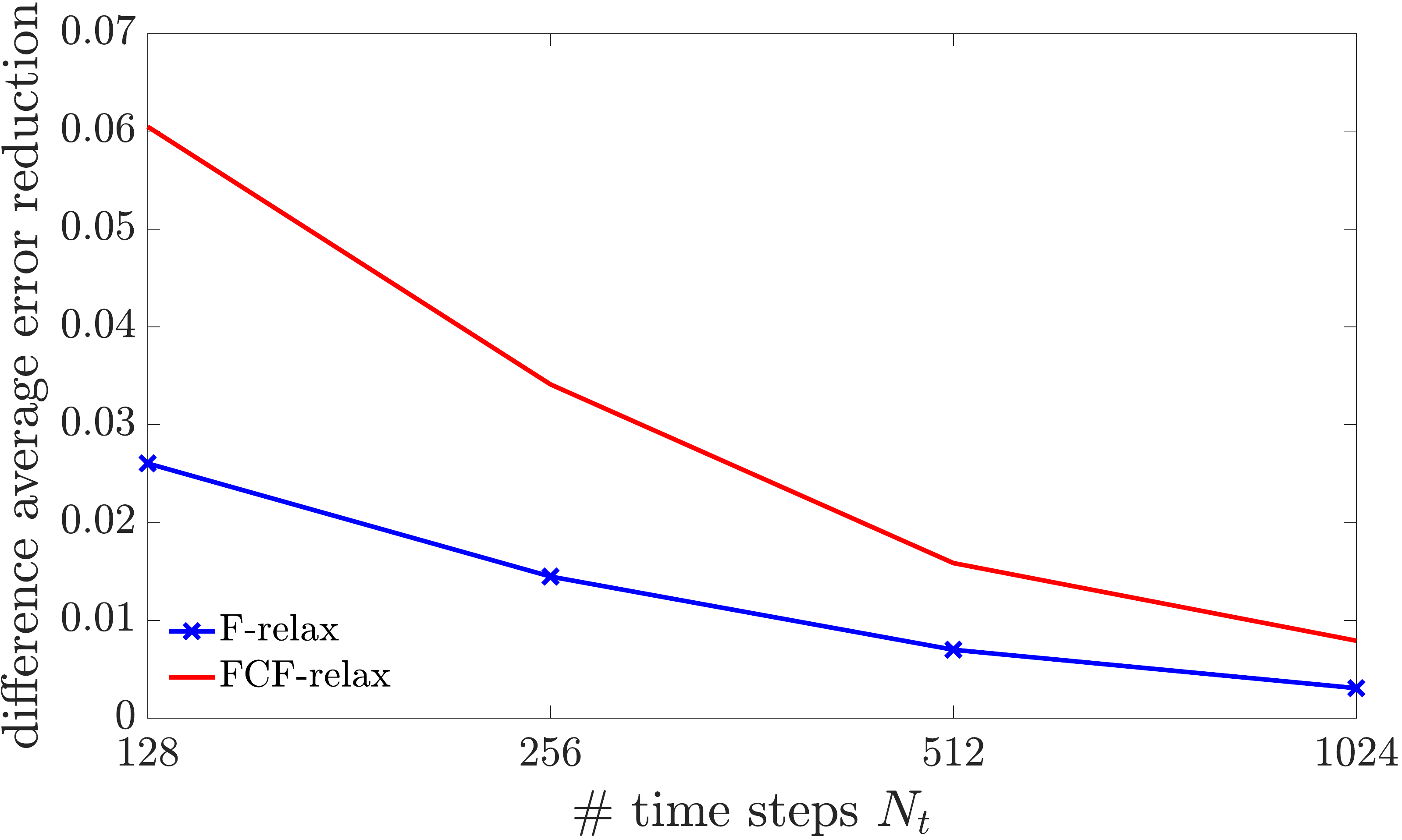}}
	\caption{Difference between SAMA and space-time LFA for linear advection. Shown are average error reduction factors measured over the first 20 iterations (left) and the difference between LFA- and SAMA-predicted average error reduction factors (right) for increasing numbers of time steps.
    \label{fig:stlfa_sama_advection}}
\end{figure}

Interestingly, in results not shown here, we see that space-time LFA
is much more predictive of behaviour over the first few MGRIT
iterations than it is over later iterations.  As a concrete example, in Figure
\ref{fig:stlfa_sama_ra}, we see that the space-time LFA and SAMA
predictions agree nearly perfectly for the first iteration, but
diverge after this.  When averaging over the first 10 iterations, in
place of the first 20 used in Figure \ref{fig:stlfa_sama_advection},
we see improvements of about a factor of two in the differences in
average error reduction predicted compared to the results shown in
Figure \ref{fig:stlfa_sama_advection}.  These results are offset by
correspondingly higher errors in iterations 11-20, where SAMA
accurately predicts improvements in convergence factors that are
missed by LFA.


\subsection{Investigating MGRIT convergence for linear advection}\label{sub:investigating_adv}
So far, we have only presented results for flow speed $c=1$ and
two-level cycling with factor-2 temporal coarsening. In this section,
we investigate the effects of other wave speeds, of multilevel
cycling, and of other coarsening factors on convergence. In
particular, we are interested in answering the question of which
frequencies cause slow convergence when MGRIT performance degrades.

\subsubsection{Influence of model parameters}\label{sub:adv_influence_parameters}
The discrete advection problem \eqref{eq:advection:discrete} depends
on the factor $\lambda := (c\Delta t)/{\Delta x}$, which can be seen
as an effective CFL number, relating the flow speed $c$ and the
discretization parameters ${\Delta x}$ and $\Delta t$. To determine
the effect of $\lambda$ on convergence, we consider predicted error
reduction for varying flow speeds $c$, while keeping the
discretization parameters ${\Delta x}$ and $\Delta t$ fixed. As SAMA provides 
the most accurate predictions among the three analysis tools and allows insights into 
error reduction as a function of spatial modes, in this section, we only consider SAMA. 
Figure \ref{fig:vary_wave_speed_advection} shows the error reduction factor
$\sigma_\text{SAMA}$ for the first 20 two-level iterations with F- and
FCF-relaxation and factor-2 temporal coarsening applied to linear
advection with flow speeds $c=2$ and \revision{$c=.5$} (corresponding to $\lambda
=.4$ and \revision{$\lambda=.1$}), respectively, discretized on a space-time mesh
of size $64\times 512$ with spatial mesh size ${\Delta x}=1/2$ and
time-step size $\Delta t=1/10$. The right plot in Figure
\ref{fig:vary_wave_speed_advection} details the error reduction for
the first iteration, showing the SAMA-predicted error reduction as a
function of the spatial Fourier modes. Convergence with both
relaxation strategies is similar, with convergence degrading with increasing effective CFL. 
\revision{We note that the error reduction is slowest for a set of low frequency modes that are close to, but do not include, the constant mode at $\theta = 0$, indicating that the convergence rate is limited by the coarse-grid correction process.}

\begin{figure}[h!t]
	\centerline{\includegraphics[width=.48\textwidth]{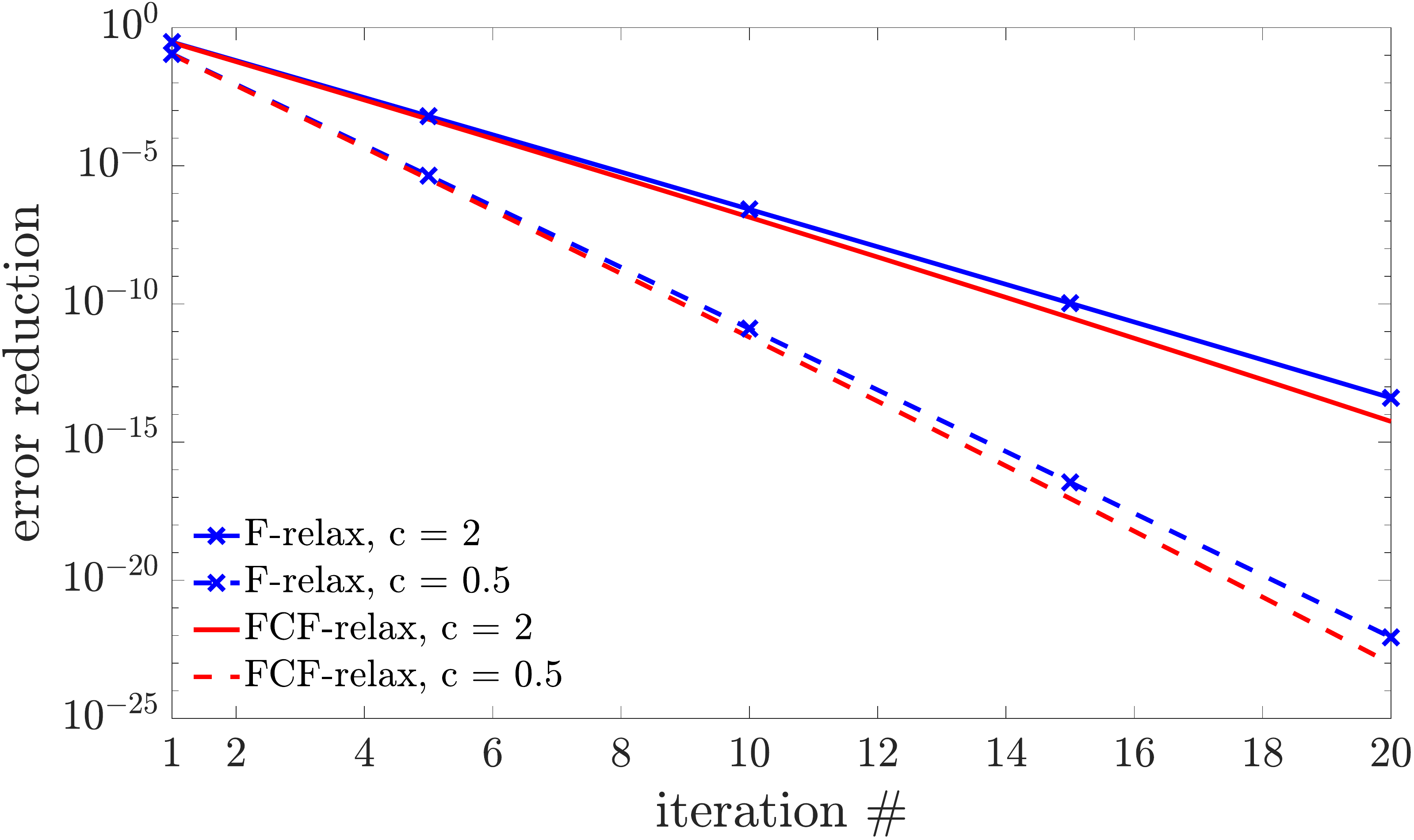}\quad
	\includegraphics[width=.48\textwidth]{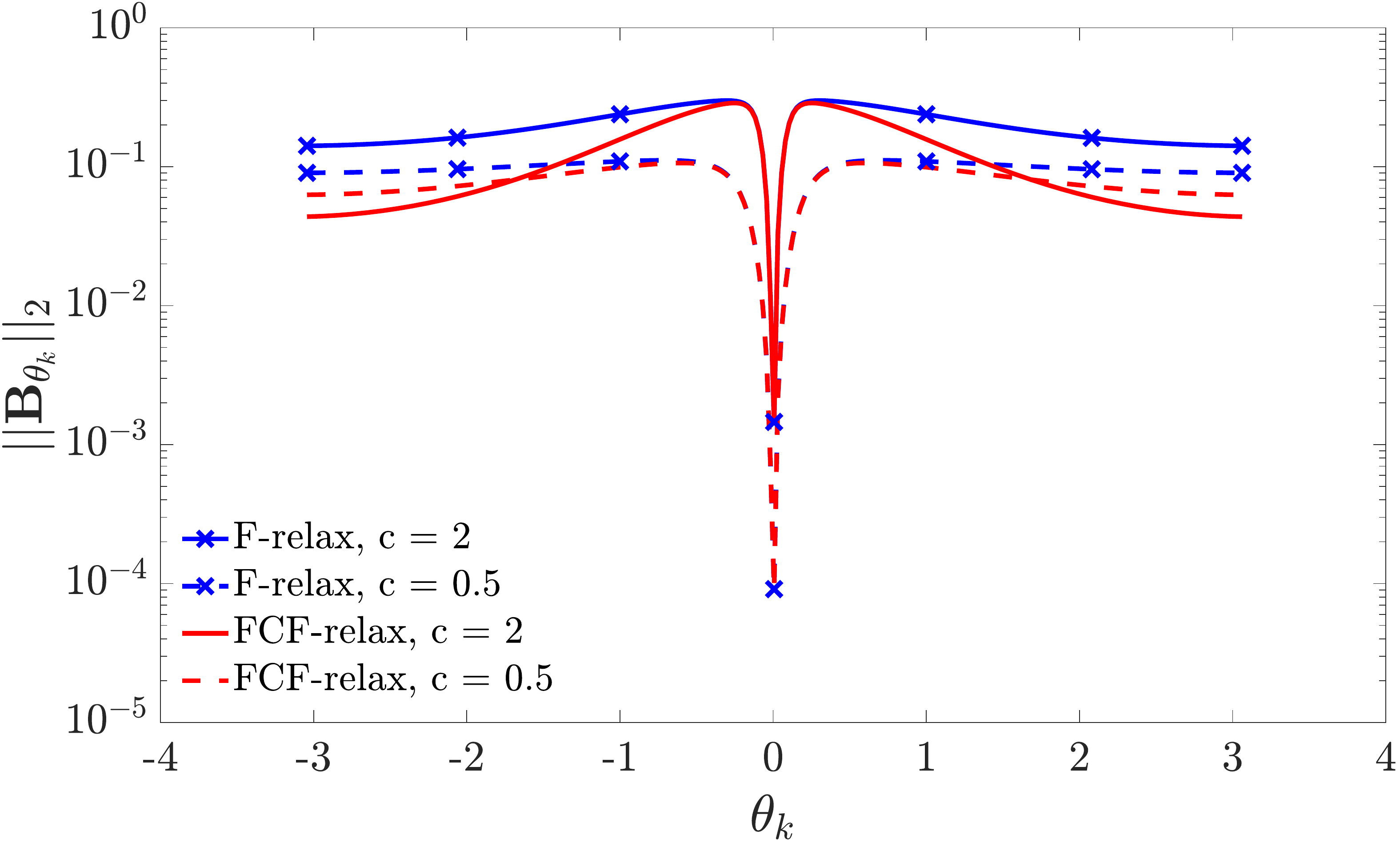}}
	\caption{Error reduction for varying wave speeds. At left, worst-case error reduction and at right, error reduction per spatial Fourier mode for the first iteration.\label{fig:vary_wave_speed_advection}}
\end{figure}


\subsubsection{Effects of multilevel cycling and coarsening}\label{sub:adv_influence_cycling}
In Parareal and MGRIT algorithms, computations on the coarsest
temporal grid are sequential. Larger temporal coarsening factors
reduce the size of the coarsest-grid problem and, thus, the cost of
computations. On the other hand, large coarsening factors increase the
cost of relaxation. To determine the effect of different coarsening
strategies on convergence, we consider two- and three-level MGRIT
variants applied to the advection problem discretized on a $64\times
256$ space-time grid with spatial mesh size ${\Delta x} = 1/2$ and
time-step size $\Delta t = 1/10$. The left plot in Figure
\ref{fig:2lvl_vs_3lvl_advection} shows SAMA-predicted error reduction
factors for two-level iterations with F- and FCF-relaxation with
factor-2 and with factor-4 temporal coarsening. Increasing the
coarsening factor leads to slower convergence, especially for
F-relaxation. When considering the predicted average error reduction,
$\overline{\sigma}_\text{SAMA}$, over the first 10 iterations, average
error reduction per iteration degrades from 0.13 to 0.31 for
F-relaxation and from 0.11 to 0.24 for FCF-relaxation. Note that
considering a three-level method with coarsening factors $m$ and $m_2$
leads to the same number of time points on the coarsest grid as a
two-level method with a coarsening factor of $mm_2$. Therefore, we
compare convergence of two-level iterations with F- and FCF-relaxation
with factor-4 coarsening with convergence of three-level schemes using
factor-2 coarsening in both coarsening steps. The right plot in Figure
\ref{fig:2lvl_vs_3lvl_advection} shows error reduction factors for the
two-level methods and for three-level V- and F-cycle methods. Neglecting cycle 
costs, V-cycles lead to slower convergence than the corresponding two-level 
schemes, while F-cycles converge faster than the two-level schemes. However, 
taking into account that three-level F-cycles are twice as expensive as three-level 
V-cycles, the convergence rates of three-level V-cycles are better than those of 
three-level F-cycles, with an average error reduction over the first 10 iterations 
of three-level F-cycles with F- or FCF-relaxation of about 0.15 and 0.12, respectively, 
compared to squared average error reduction rates of three-level V-cycles with 
F- or FCF-relaxation of about 0.11 and 0.08, respectively. Note that the two-level 
results in Figure \ref{fig:2lvl_vs_3lvl_advection} with $m=4$ and FCF-relaxation come
close to achieving convergence due to the exactness property, since
the coarse-grid problem has only 64 temporal meshpoints, so we expect
exact convergence in 32 two-level iterations.  This may be the
underlying reason for the prominent difference in convergence between
F- and FCF-relaxation over the later iterations.

\begin{figure}[h!t]
	\centerline{\includegraphics[width=.48\textwidth]{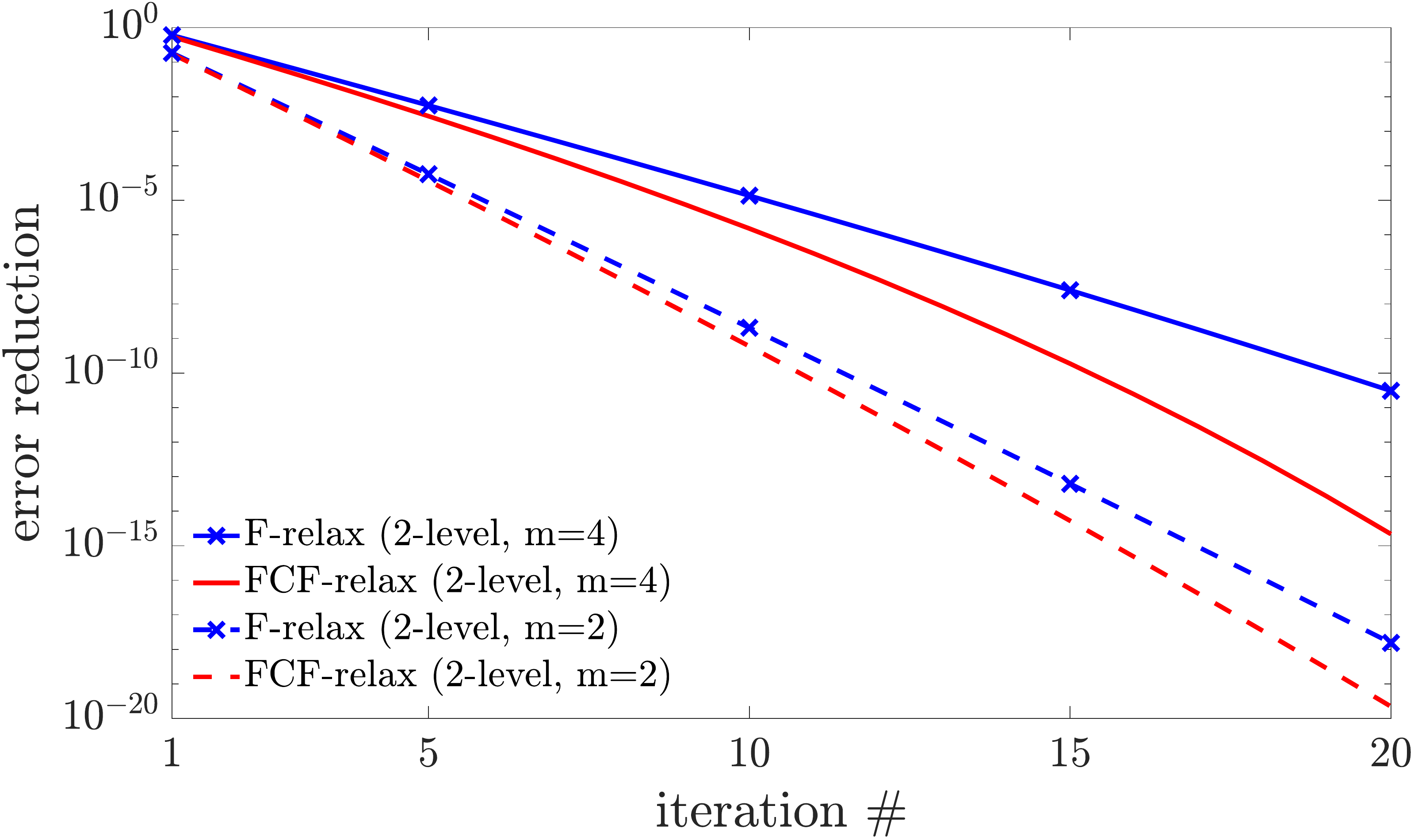}\quad
	\includegraphics[width=.48\textwidth]{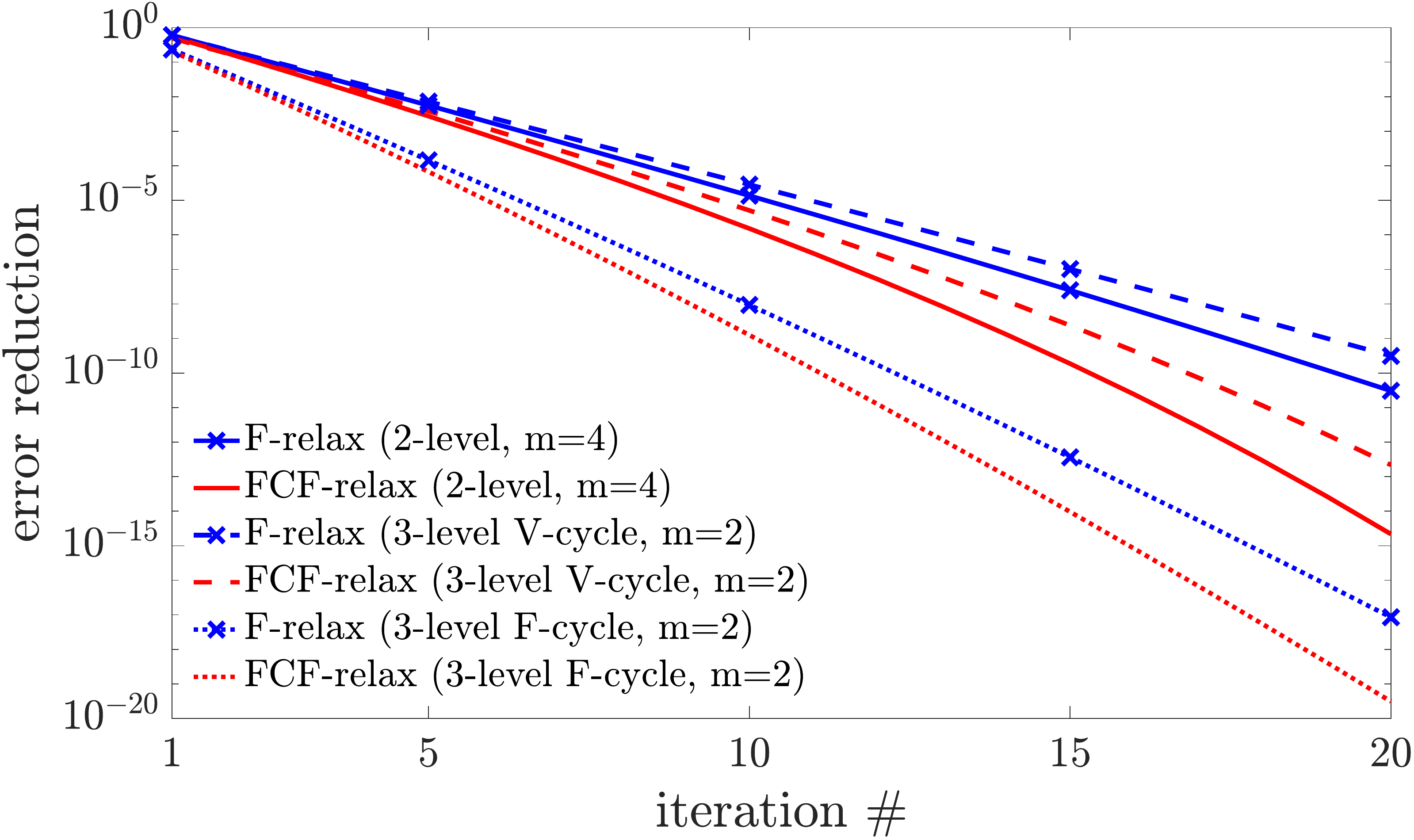}}
	\caption{Error reduction for two- and three-level variants
          applied to advection. At left, two-level methods with factor-2 and factor-4 temporal coarsening and at right, two- and three-level variants with same coarsest grid size.\label{fig:2lvl_vs_3lvl_advection}}
\end{figure}


\subsection{Investigating MGRIT convergence for linear elasticity}\label{sub:investigating_elasticity}
Performing similar experiments for the linear elasticity problem as for the linear advection problem in the previous section is computationally challenging. Additionally to the spatial Fourier symbol being a block matrix as a result of the system structure of the problem and of the mixed finite-element discretization, we consider Fourier frequency pairs on a two-dimensional tensor-product mesh instead of on a one-dimensional mesh. Results in Section \ref{sub:comparison} show that LFA may not be a feasible option, and predictions of RA are pessimistic due to the necessary condition number factor in the computations. SAMA gives a practical improvement over RA, which is also true for SAMA when computing bounds instead of exact Euclidean operator norms. We therefore focus on computing the SAMA bounds given in Equation \eqref{eq:err_red_factor_sama_bound} when analyzing MGRIT convergence.

One remaining drawback of the modified SAMA analysis is that it still
becomes expensive for large numbers of time steps. When considering
larger coarsening factors, however, we need to increase the number of
time steps due to the exactness property of MGRIT. To make the SAMA analysis computationally tractable for larger numbers of time steps, we sample spatial Fourier frequencies on a discrete mesh with spacing $h_\theta=\pi/16$. Figure \ref{fig:samaBound_vary_htheta_elasticity} shows that with this mesh spacing, SAMA captures the \revision{dominant behavior} and shows details in the spectral Fourier domain that are not visible with a coarser mesh spacing of $h_\theta=\pi/8$, and are not significantly different when considering a finer mesh spacing of $h_\theta=\pi/32$.

\begin{figure}[h!t]
	\centerline{\includegraphics[width=.385\textwidth]{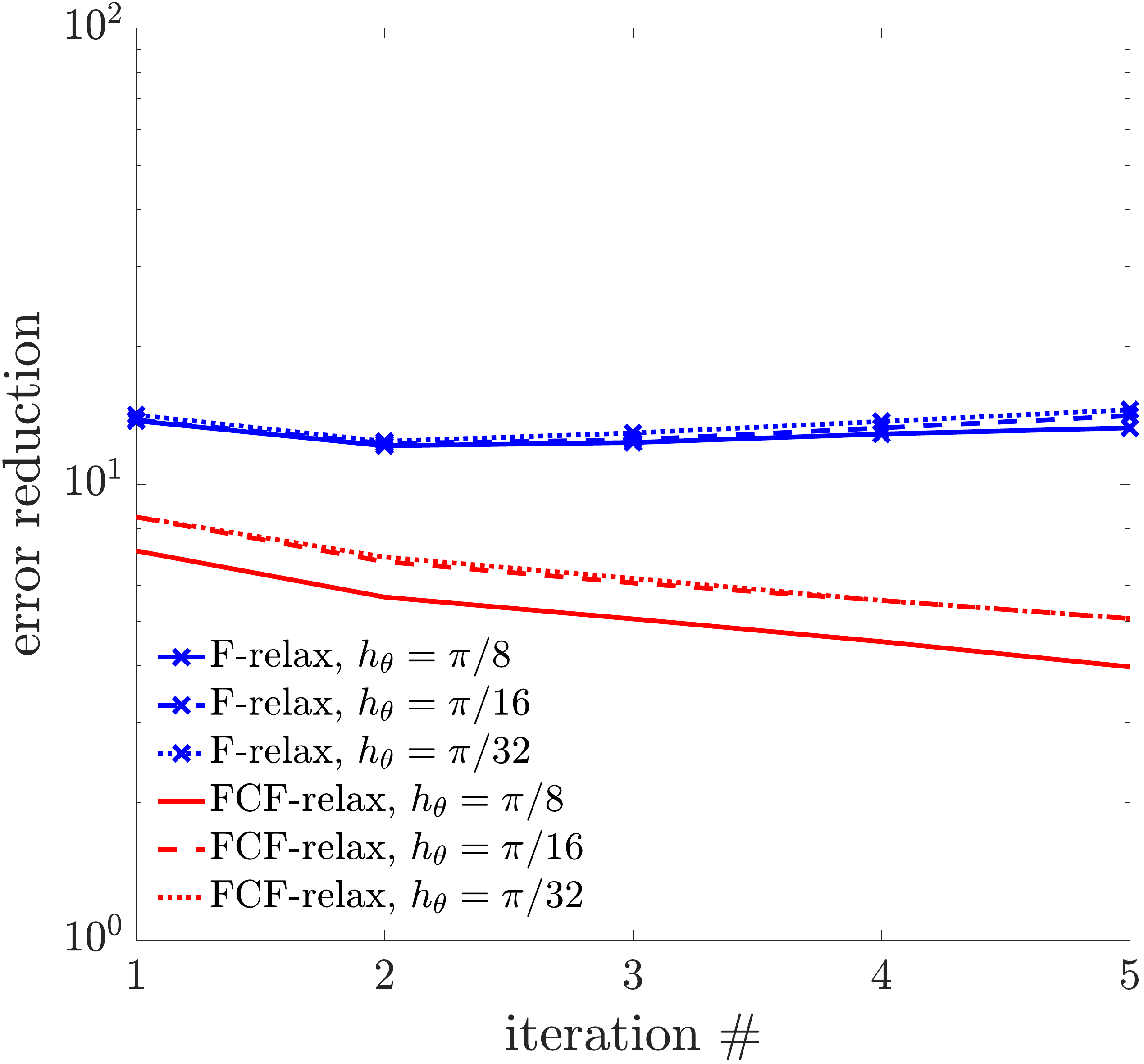}\qquad\includegraphics[width=.5\textwidth]{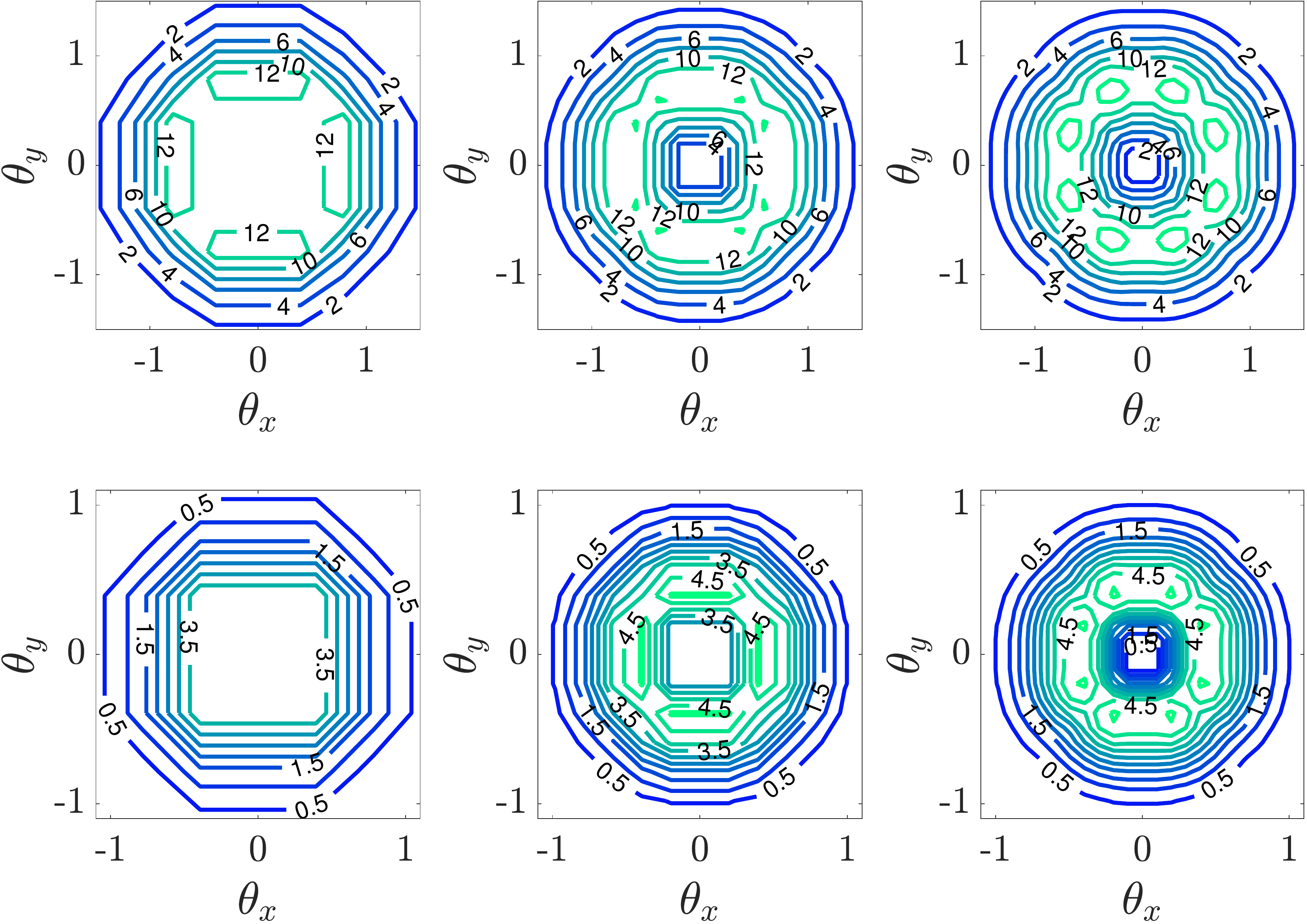}}
	\caption{Error reduction factors predicted by SAMA using
          various mesh spacings, $h_\theta$, for sampling the spatial
          Fourier frequencies for two-level MGRIT with factor-8
          temporal coarsening applied to the linear elasticity problem
          discretized on a $32^2\times 256$ space-time mesh. At left,
          worst-case error reduction and at right, error reduction per
          spatial Fourier mode for the fifth iteration.  The top row
          of figures at right correspond to results with
          F-relaxation, while the bottom row shows results with
          FCF-relaxation; the columns at right correspond to
          $h_\theta = \pi/8$ (left), $h_\theta = \pi/16$ (middle), and
          $h_\theta = \pi/32$ (right).\label{fig:samaBound_vary_htheta_elasticity}}
\end{figure}

In Figure \ref{fig:samaBound_vary_m_elasticity}, we look at the
effects of the coarsening factor on error reduction. The left-hand
side of the figure plots error reduction factors for the first 16
iterations of two-level MGRIT with factor-2 and with factor-8 temporal
coarsening. While using a coarsening factor of two leads to good
convergence behavior of both methods, we see divergence for factor-8
temporal coarsening in the first iterations. Note that the exactness
property drives convergence of the iteration with FCF-relaxation and
factor-8 coarsening at later iterations. At the right of Figure
\ref{fig:samaBound_vary_m_elasticity}, we plot error reduction factors
of the tenth iteration for both schemes (top and bottom rows) and both
coarsening strategies (left and right columns) as functions of the
Fourier frequency pair $(\theta_x,\theta_y)$.  Note that the overall
``structure'' of these plots is quite similar across the different
algorithmic parameters (although they are plotted on slightly
different axes to best show individual details), with excellent
convergence at frequencies both close to and far from the origin in
frequency space, and worst convergence achieved for a range of small
(but not zero) frequencies.  This is quite possibly related to 
phase errors between the fine- and coarse-scale
propagators\cite{Ruprecht2018}; to what extent this mode analysis tool 
may provide insight into how to cure the poor convergence is a question
left for future work.

\begin{figure}[h!t]
	\centerline{\includegraphics[width=.5\textwidth]{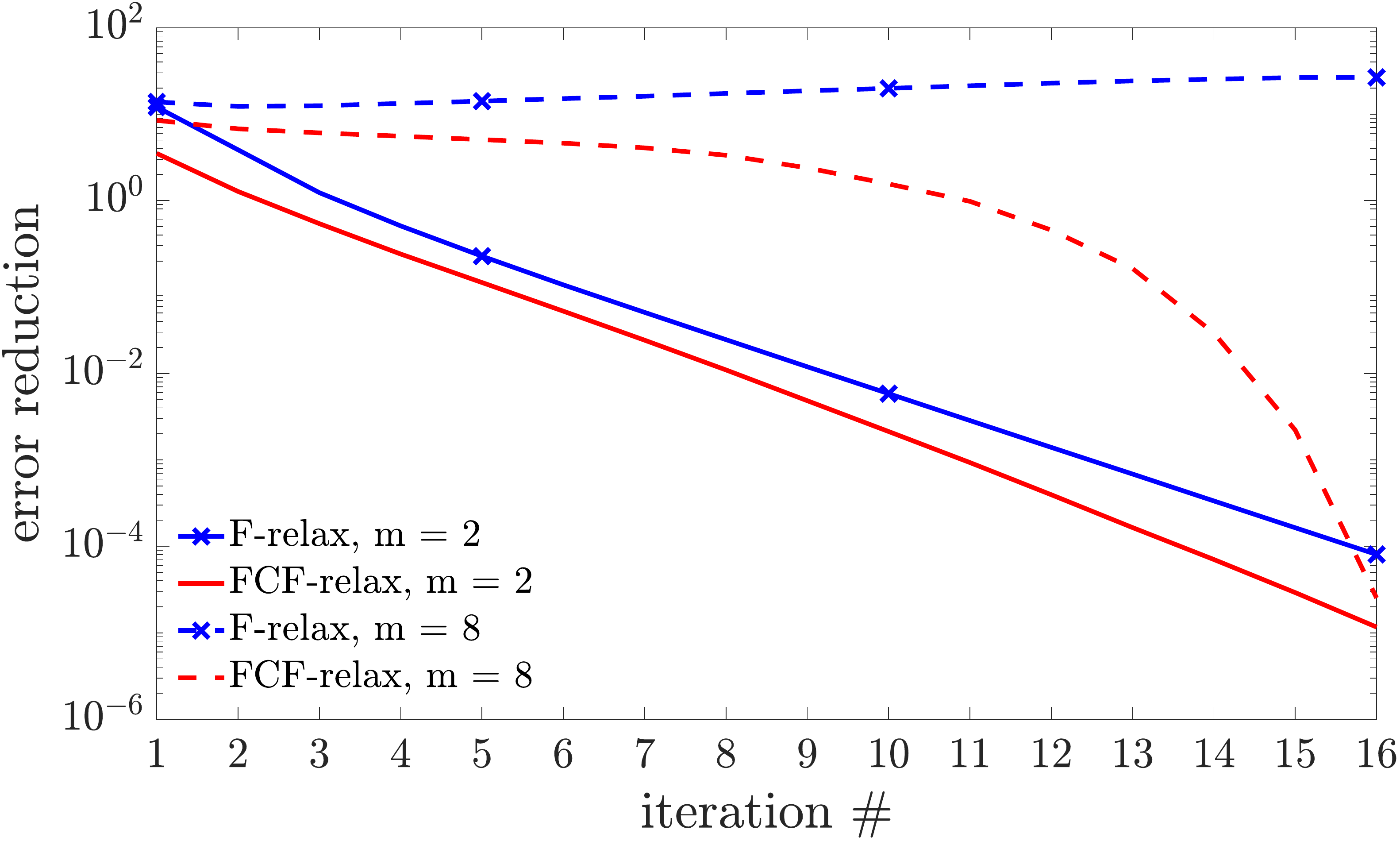}\quad\includegraphics[width=.4\textwidth]{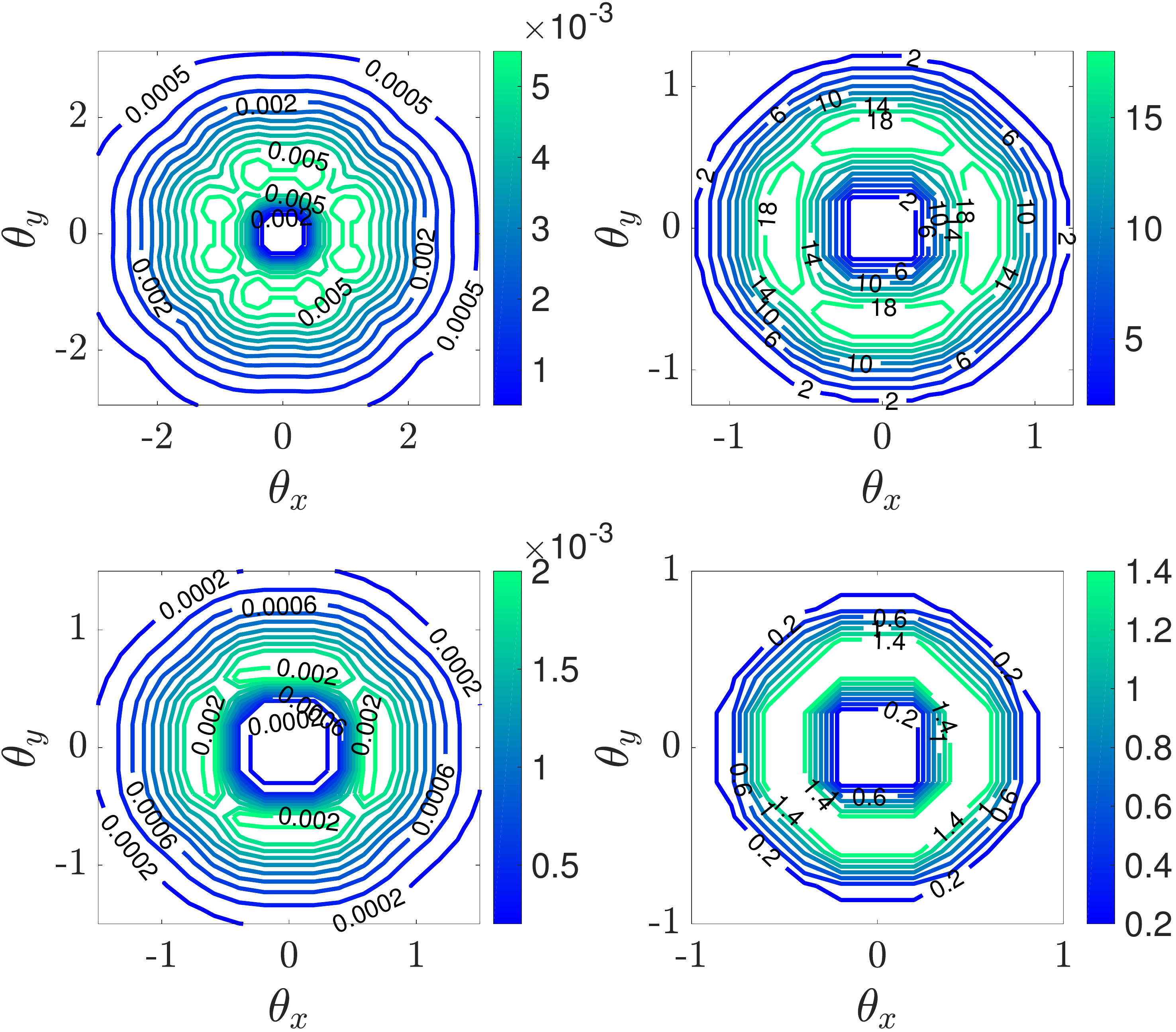}}
	\caption{Error reduction factors predicted by SAMA for
          two-level MGRIT with factor-2 and factor-8 temporal
          coarsening applied to the linear elasticity problem
          discretized on a $32^2\times 256$ space-time mesh. At left,
          worst-case error reduction and at right, error reduction for
          the tenth iteration with F-relaxation and factor-2 (top
          left) and factor-8 coarsening (top right) and with
          FCF-relaxation and factor-2 (bottom left) and factor-8 (bottom right) coarsening as functions of the Fourier frequency, $\boldsymbol{\theta}$.\label{fig:samaBound_vary_m_elasticity}}
\end{figure}

The discrete elasticity problem, given in Equations
\eqref{eq:vt:discrete}-\eqref{eq:incompr:discrete}, depends on the
material parameters $\rho$ and $\mu$ as well as on the discretization
parameters ${\Delta x}$ and $\Delta t$. To determine relevant
parameter sets for convergence studies, we make the following
observations: In \eqref{eq:incompr:discrete}, the scaling of ${\Delta
  x}$ in the discrete divergence operator and of $\Delta t$ clearly
does not matter at all. In \eqref{eq:ut:v:discrete}, the value of
$\Delta t$ matters as an absolute.  For standard finite-element
discretizations on uniform meshes in two spatial dimensions (as we
consider here), the mass matrix can be written as a scaling factor of $(\Delta
x)^2$ times an operator that is independent of $\Delta x$, while the
entries in the stiffness matrix are independent of $\Delta x$.  Thus,
we can rescale \eqref{eq:vt:discrete} by dividing through by $\rho
{\Delta x}^2$, and rescaling $p_i$ to obtain
\[
(1/{\Delta x}^2)Mv_i + (\Delta t^2\mu/\rho {\Delta x}^2) Kv_i +
(1/\Delta x)B\widetilde{p}_i =
(1/{\Delta x}^2)Mv_{i-1} - (\Delta t\mu/\rho {\Delta x}^2) Ku_{i-1}.
\]
Since the scaling on the two terms involving $K$ differs by a factor
of $\Delta t$, there are two natural parameters to consider:\linebreak$\nu :=
\Delta t\mu/\rho {\Delta x}^2$ and $\Delta t$.

To perform a thorough set of experiments with these parameters with a
reasonable computational complexity, we fix $N_t = 128$. The left plot
in Figure \ref{fig:sama_vary_T_elasticity} shows that the choice
$N_t=128$ is reasonable for experimenting with the parameters in the
system since this choice ensures that convergence is not affected by
the exactness property of MGRIT in the first few iterations.  Furthermore, 
aside from effects of the exactness property for small $N_t$, performance  
of the two-level methods does not depend on the number of time points. 
Note that varying the number of time steps changes the length of the time interval. 
This length can also be controlled by varying the time-step size (on a 
uniform-in-time mesh as we consider here). In the right plot of Figure 
\ref{fig:sama_vary_T_elasticity}, we consider three different time-step 
sizes $\Delta t$ for fixed $N_t = 128$ and $\rho = \mu = \nu = 1$. Results 
show that as $\Delta t$ decreases, there is an increasing initial jump in 
the worst-case error, but asymptotically, convergence appears to be 
independent of the time-step size. Furthermore, convergence of the 
two-level methods with $F$- and with $FCF$-relaxation is similar. 
After 10 iterations, $FCF$-relaxation gives about a factor of six improvement 
in error reduction over $F$-relaxation at twice the cost per itertation.  

\begin{figure}[h!t]
	\centerline{\includegraphics[width=.48\textwidth]{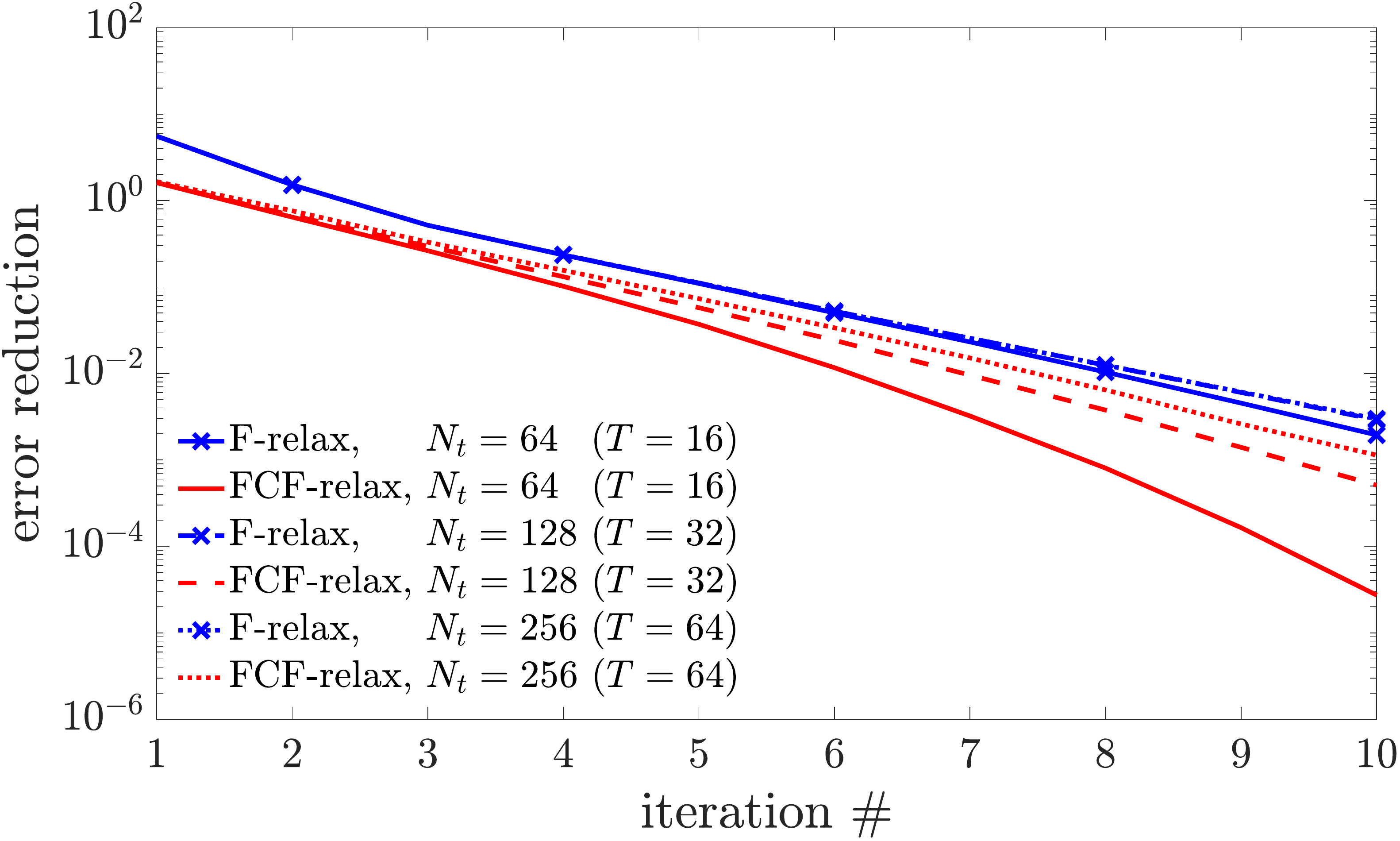}\quad\includegraphics[width=.48\textwidth]{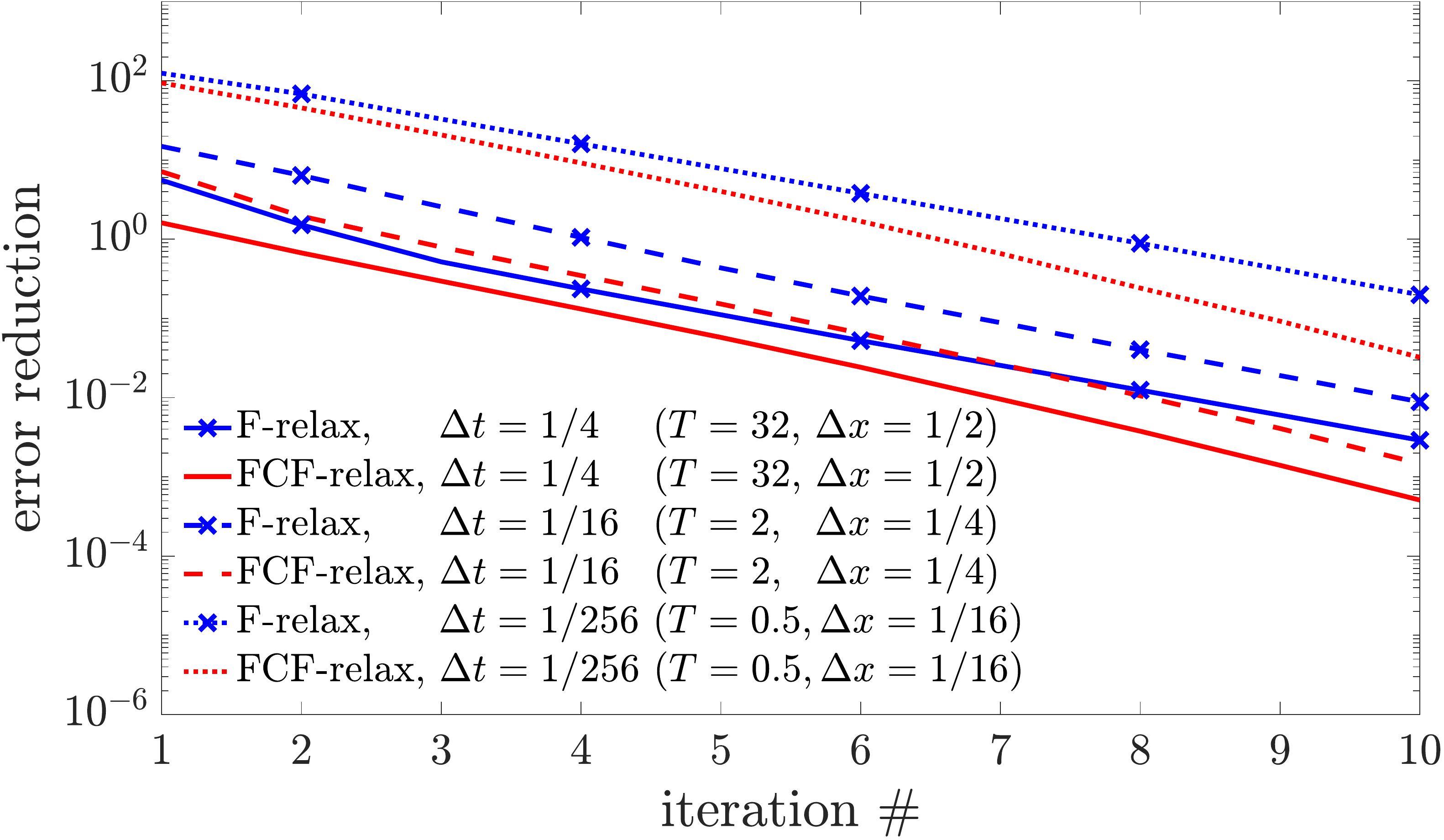}}
	\caption{Error reduction on space-time grids of size $32^2\times N_t$ for two-level methods with factor-2 temporal coarsening applied to the elasticity model problem on $(0,16)^2\times [0,T]$ discretized with fixed time step $\Delta t = 1/4$ (left) or with a fixed number of time steps $N_t=128$ (right). \label{fig:sama_vary_T_elasticity}}
\end{figure}

To gain insight into effects of model parameters on convergence, we consider the parameter, $\nu$, factored in the form $\nu = (\Delta t/{\Delta x}^2)(\mu/\rho)$, i.\,e., we group discretization and material parameters, and we study its effect on convergence of the two-level methods. More precisely, we look at convergence for fixed $\nu$ by simultaneously varying $\Delta t$ and ${\Delta x}^2$ or $\mu$ and $\rho$, respectively, and at error reduction for varying $\nu$ by fixing three of the four parameters and varying the remaining one. Figure \ref{fig:sama_vary_material_parameters_elasticity} shows effects of the material parameters on error reduction when the discretization parameters $\Delta t$ and $\Delta x$ are held constant. The left-hand side of the figure plots error reduction factors for fixed ratio $\mu/\rho = 1$, i.\,e., simultaneously varying $\mu$ and $\rho$ (corresponding to fixed $\nu$), while at the right, error reduction factors are shown for various ratios $\mu/\rho$ for fixed $\mu = 1$ (corresponding to varying $\nu$). The plots show that performance of both two-level schemes only depends on the ratio $\mu/\rho$. In the limit of large $\mu/\rho$, convergence is extremely fast, especially for the two-level scheme with $FCF$-relaxation, and unsteady for $F$-relaxation. This behavior is not surprising as this limit corresponds to a stiff material and, thus, the solution is approaching zero, i.\,e., oscillations are rapidly damped because of the Euler discretization.

\begin{figure}[h!t]
	\centerline{\includegraphics[width=.48\textwidth]{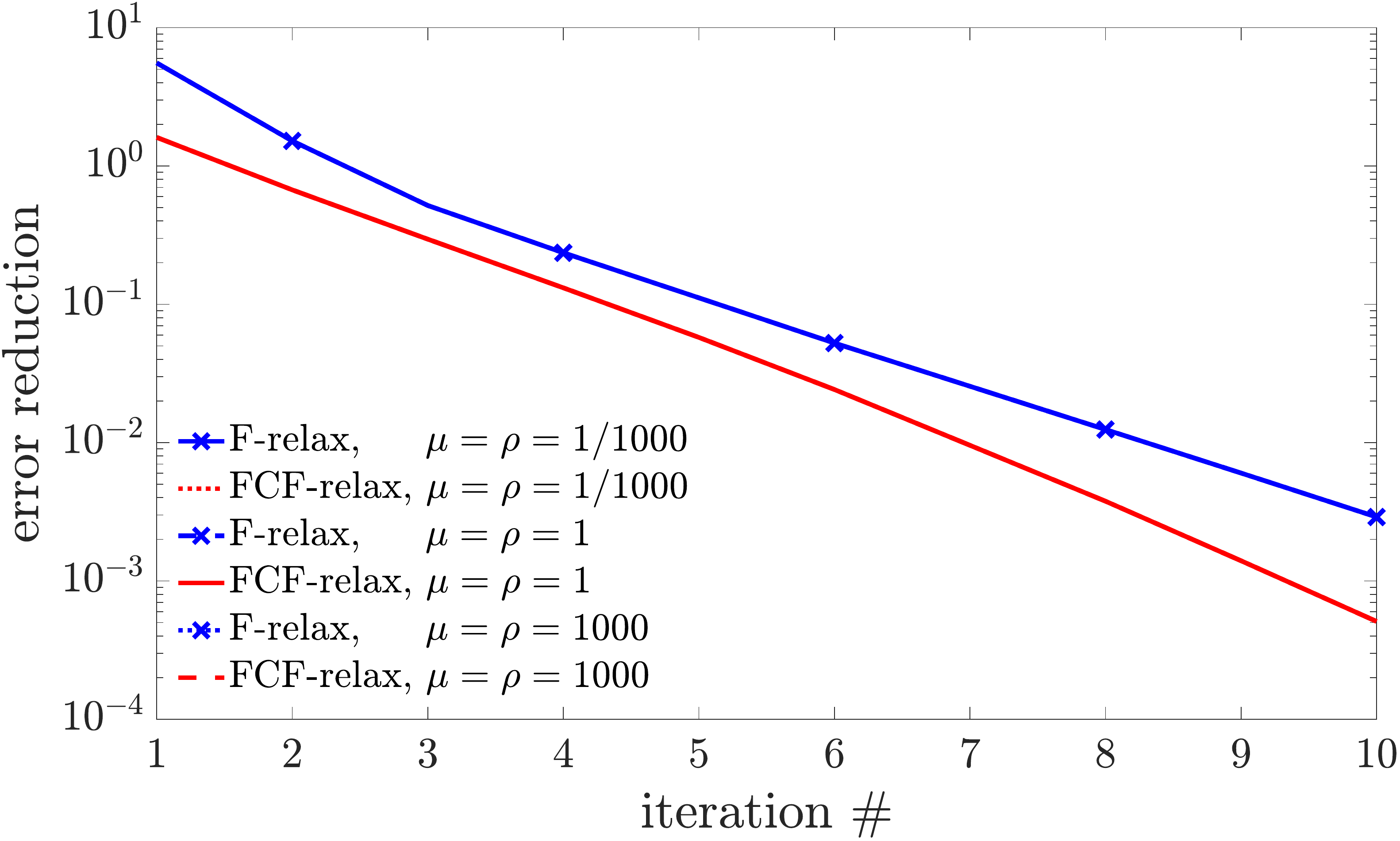}\quad\includegraphics[width=.48\textwidth]{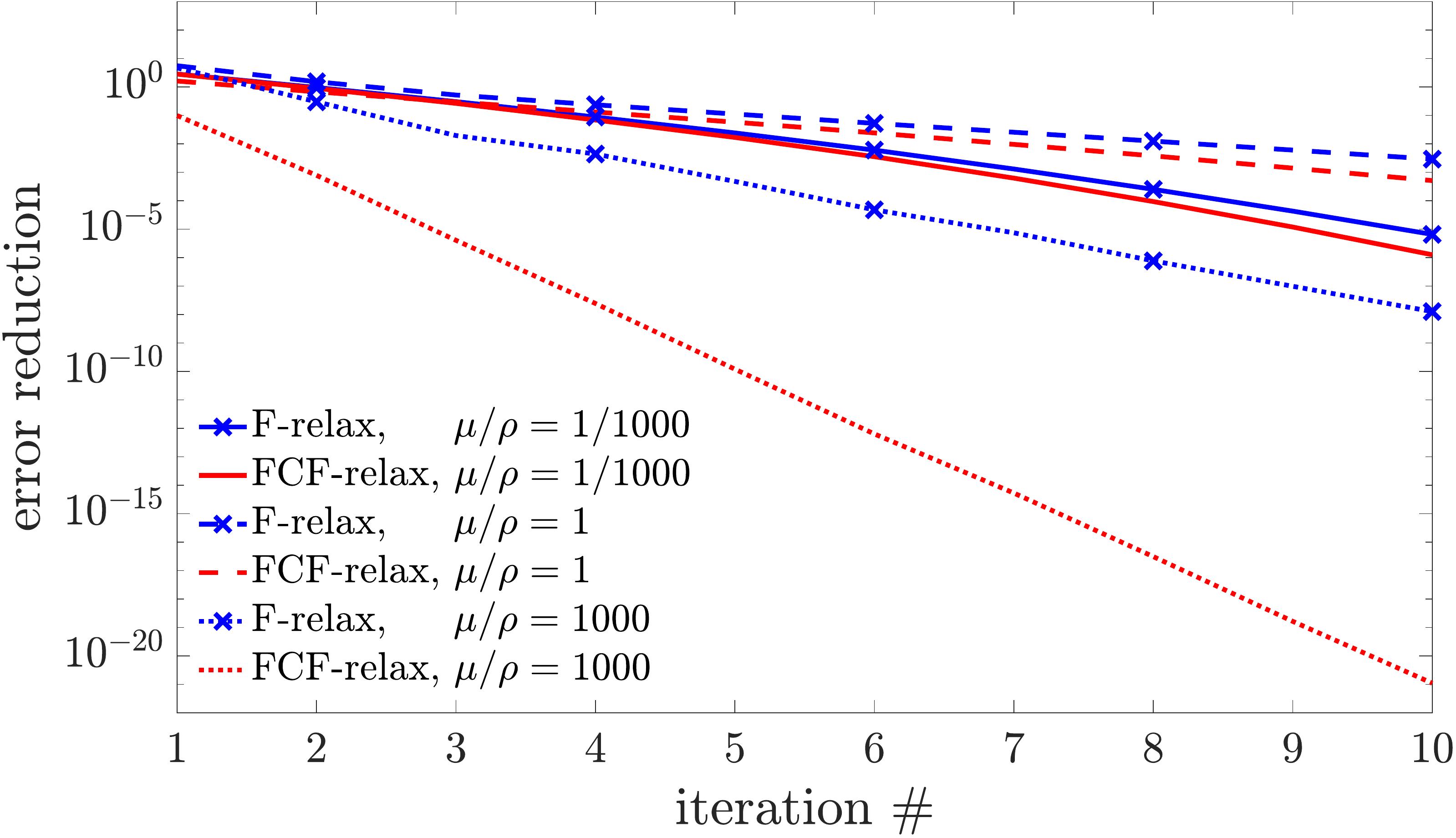}}
	\caption{Error reduction for two-level methods with factor-2 temporal coarsening applied to elasticity discretized on a $32^2\times 128$ space-time grid with mesh sizes $\Delta x = 1/2$ and $\Delta t = 1/4$ for various material parameters $\mu$ and $\rho$. At left, we simultaneously vary $\rho$ and $\mu$ (corresponding to fixed parameter $\nu = (\Delta t/{\Delta x}^2)(\mu/\rho) = 1$) and at right, we vary the material parameter $\rho$ and fix $\mu = 1$ (corresponding to varying $\nu$).\label{fig:sama_vary_material_parameters_elasticity}}
\end{figure}

The ratio $\mu/\rho$ allows us to determine effects of the parameter $\nu$ on error reduction for large variations in $\nu$. Using the slopes of the best-fit lines as a function of the iterations, we compute the average error reduction per iteration. The right-hand side of Figure \ref{fig:samaBound_avg_error_red_elasticity} shows the average error reduction over iterations two thru 10 as a function of the parameter $\nu$ for $\Delta t = 1/4$, $\Delta x = 1/2$, and $\mu = 1$ fixed, and various values of $\rho$. In all cases, average error reduction is bounded by 0.5, showing good and robust convergence in a reasonable parameter regime. At the left of Figure \ref{fig:samaBound_avg_error_red_elasticity}, we plot average error reduction as a function of the time-step size for fixed parameter $\nu = 16$. Results demonstrate that convergence does not change until we get to large time steps, again, indicating good and robust convergence.

\begin{figure}[h!t]
	\centerline{\includegraphics[width=.48\textwidth]{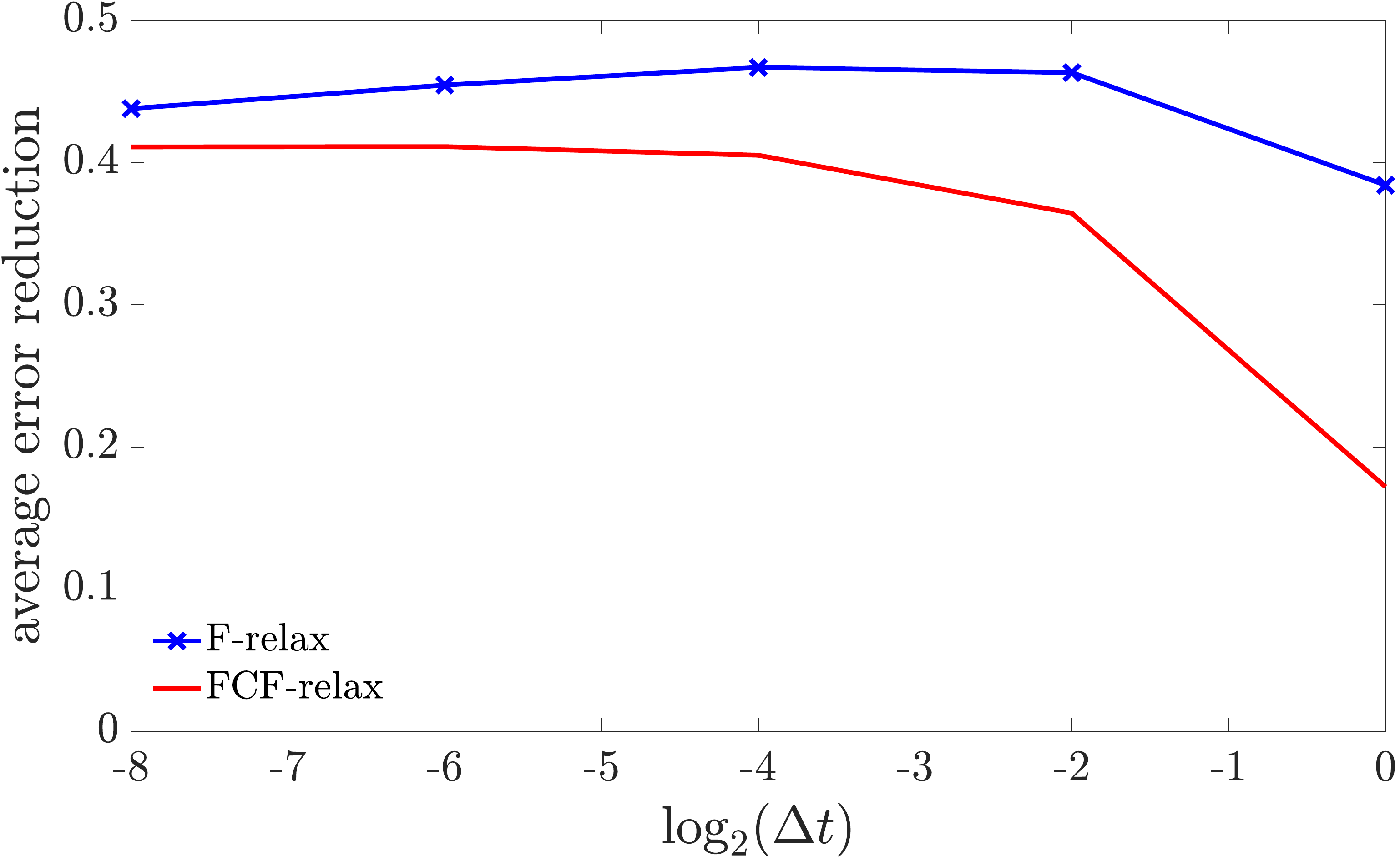}\quad
	\includegraphics[width=.48\textwidth]{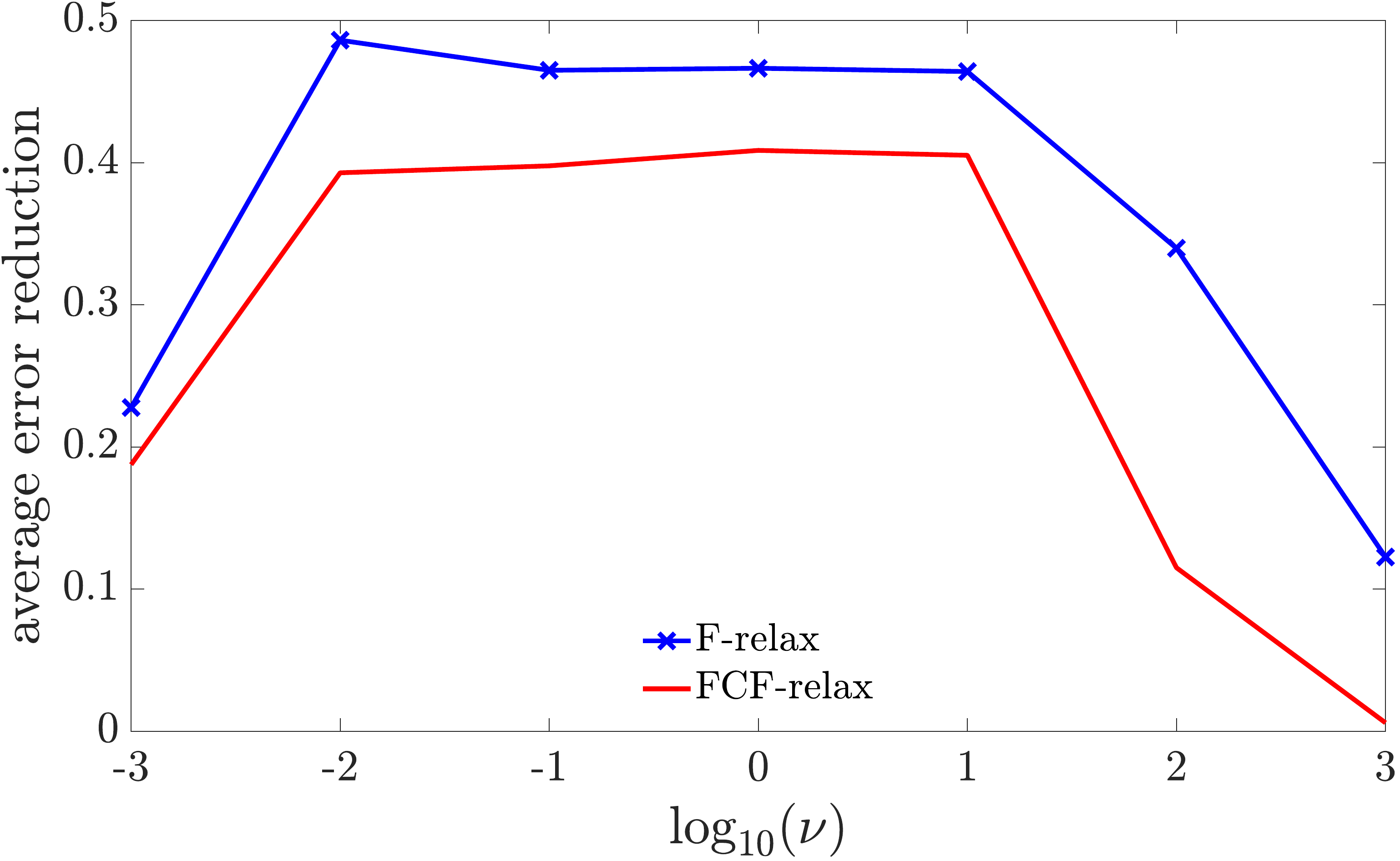}}
	\caption{Average error reduction over iterations two thru ten for two-level methods with factor-2 temporal coarsening applied to elasticity discretized on a $32^2\times 128$ space-time grid. At left, average error reduction as a function of $\Delta t$ for fixed $\nu = 16$ and at right, we vary the material parameter $\rho$ and fix $\Delta t = 1/4$, $\Delta x = 1/2$, $\mu = 1$, and show the average error reduction as a function of $\nu = (\Delta t/{\Delta x}^2)(\mu/\rho)$. \label{fig:samaBound_avg_error_red_elasticity}}
\end{figure}

\section{Conclusions}\label{sec:conclusions}

Currently, a key challenge in the development of parallel-in-time
algorithms is achieving scalable algorithmic performance for
hyperbolic PDEs.  While some insight has been gained by both
trial-and-error and more systematic computational studies in recent
years, predictive analytical tools to aid in this development have
been lacking in the literature.  Here, we examine and extend mode
analysis approaches, long used in the spatial multigrid community, to
examine performance of methods from the Parareal/MGRIT class for two
hyperbolic model problems.  Our extensions lead to tighter bounds on
performance that can be computed more efficiently than those existing
in the literature.  When applied to these model problems, we gain some
insight into what poses essential challenges in developing algorithms
for hyperbolic PDEs, and what parameter regimes, both physical and
computational, are more or less difficult to handle. \revision{As identified 
in other contexts\cite{Ruprecht2018, HDeSterck_etal_2019}, convergence 
of MGRIT appears to be limited by differences between the fine-grid and 
coarse-grid time propagators for a set of smooth spatial modes near zero 
frequency, but not at zero frequency.  While this is not a unique observation, 
we point out that the mode analysis realization of this statement has been 
key to the developments in De Sterck et al. \cite{HDeSterck_etal_2019}, 
who derive an optimization perspective on the construction of MGRIT 
coarse operators that is particularly effective for explicit and higher-order 
discretizations of the linear advection equation. This new approach
offers substantially more robust MGRIT performance for the linear 
advection equation than yet seen in the literature. Another key 
observation for the problems and discretizations considered in this paper} 
is that, while the FCF-relaxation that is typical in MGRIT
is generally more effective than the F-relaxation typical in Parareal,
the differences between these approaches in a two-level setting is
generally not substantial; when FCF-relaxation works best,
F-relaxation has good performance, too, and when F-relaxation is
ineffective, FCF-relaxation is not a magic cure (unless the exactness
property becomes significant).  \revision{Note that this conclusion is not 
universal. For linear advection, FCF-relaxation may be beneficial in the 
case of higher order Runge-Kutta discretizations, as discussed by Dobrev et al.
\cite{VADobrev_etal_2017} and De Sterck et al. \cite{HDeSterck_etal_2019}} 
While this work does
not immediately give direction as to how to improve algorithmic performance, 
it offers predictive tools that can be used to identify and diagnose convergence 
difficulties and may help in designing or optimizing improved algorithms.  
\revision{ Possible avenues for future research clearly include the application of 
these analysis tools to a broader class of PDEs and discretizations, as well as their 
extension to consider the case of convergence to the continuum solution, as 
considered for Parareal by Ruprecht\cite{Ruprecht2018}.}

%
%
\appendix

\section{Spatial Fourier Symbols for Elasticity Operator}
\label{sec:appendix:spatialLFA}
The Fourier symbols of the $Q2$ mass and stiffness matrices using
nodal basis functions can be computed using tensor products,
\[
	\widetilde{M}_x(\theta_1,\theta_2) = \widetilde{M}_y(\theta_1,\theta_2) = \widetilde{M}_\text{1D}(\theta_2)\otimes\widetilde{M}_\text{1D}(\theta_1)
\]
and
\[
	\widetilde{K}_x(\theta_1,\theta_2) = \widetilde{K}_y(\theta_1,\theta_2) = \widetilde{M}_\text{1D}(\theta_2)\otimes\widetilde{K}_\text{1D}(\theta_1) + \widetilde{K}_\text{1D}(\theta_2)\otimes\widetilde{M}_\text{1D}(\theta_1),
\]
respectively, with symbols
\[
	\widetilde{M}_\text{1D}(\theta) = \frac{{\Delta x}}{30}\begin{bmatrix}
		8 - 2\cos\theta & 4\cos\frac{\theta}{2}\\
		4\cos\frac{\theta}{2} & 16
	\end{bmatrix} \quad\text{and}\quad \widetilde{K}_\text{1D}(\theta) = \frac{1}{3{\Delta x}}\begin{bmatrix}
		14+2\cos\theta & -16\cos\frac{\theta}{2}\\
		-16\cos\frac{\theta}{2} & 16
	\end{bmatrix},
\]
of the 1D $Q2$ mass and stiffness matrices, respectively\cite{YHe_SPMacLachlan_2018a,YHe_SPMacLachlan_2018b}. The Fourier symbols of the derivative operators, $\widetilde{B}_x$ and $\widetilde{B}_y$, are given by\cite{YHe_SPMacLachlan_2018b}
\[
	\widetilde{B}_x(\theta_1,\theta_2)^T = \begin{bmatrix}
		-\frac{\imath {\Delta x}}{9}\sin\theta_1; & -\frac{4\imath {\Delta x}}{9}\sin\frac{\theta_1}{2}; & -\frac{2\imath {\Delta x}}{9}\sin\theta_1\cos\frac{\theta_2}{2}; & -\frac{8\imath {\Delta x}}{9}\sin\frac{\theta_1}{2}\cos\frac{\theta_2}{2}
	\end{bmatrix}
\]
and
\[
	\widetilde{B}_y(\theta_1,\theta_2)^T = \begin{bmatrix}
		-\frac{\imath {\Delta x}}{9}\sin\theta_2; & -\frac{2\imath {\Delta x}}{9}\sin\theta_2\cos\frac{\theta_1}{2}; & -\frac{4\imath {\Delta x}}{9}\sin\frac{\theta_2}{2}; & -\frac{8\imath {\Delta x}}{9}\sin\frac{\theta_2}{2}\cos\frac{\theta_1}{2}
	\end{bmatrix},
\]
respectively. The Fourier symbols of the time integrators, $\Phi_c$
and $\Phi_{cc}$, on the first and second coarse grid can be derived
analogously, simply by adjusting the value of $\Delta t$ in the
definition of $\widetilde{\Phi}$.

%
%

\section*{Acknowledgments}

The work of H.D.S and S.M. was partially supported by NSERC Discovery Grants.

\bibliography{pint_refs}%

\end{document}